\numberwithin{equation}{section}
\newtheorem{myTheo}{Theorem}[section]
\newtheorem{myLem}{Lemma}[section]
\newcommand{\tabcaption}{\def\@captype{table}\caption}
\newtheorem{rem}{Remark}[section]
\newtheorem{exmp}{Example}[section]
\date{}
\begin{document}

\title{{\Large \bf Semi-discrete and fully discrete HDG methods for Burgers' equation}
\thanks
  {
    This work was supported in part by National Natural Science Foundation
    of China (11771312).
  }
}     
\author{
  Zimo Zhu \thanks{Email:  zzm@stu.scu.edu.cn}, \ 
  Gang Chen \thanks{Email:  cglwdm@scu.edu.cn}
  Xiaoping Xie \thanks{Corresponding author. Email: xpxie@scu.edu.cn} \\
  {School of Mathematics, Sichuan University, Chengdu 610064, China}
}

\date{}
\maketitle

	\maketitle
\begin{abstract}
	This paper proposes  semi-discrete and fully discrete hybridizable discontinuous Galerkin (HDG)  methods for the Burgers' equation in two and three dimensions. In the spatial discretization, we use piecewise polynomials of degrees $ k \ (k \geq 1), k-1$ and $ l \ (l=k-1; k)  $ to approximate  the scalar function, flux variable and the interface trace of  scalar function,  respectively.  In the   full discretization method,  we apply a backward Euler scheme for the temporal discretization. Optimal a priori error estimates are derived. Numerical
experiments are presented to support the theoretical results.
	
	 \end{abstract}

\noindent{ \textbf{Key Words:} Burgers' equation, HDG method, semi-discrete  scheme, fully discrete scheme, error estimate}

\section{Introduction}
Let $ \Omega\subset \mathbb{R}^{d}(d=2,3) $ be a   polyhedral  domain with    boundary $ \partial\Omega $, and  let $ T > 0 $ be a given final time. We consider the following   Burgers’ equation  \cite{Burgers1948,Burgers1995}:
\begin{subequations}
\begin{align}
u_{t}-\nu\Delta u+\bm{b}(u)\cdotp\nabla u&=f\ \ \text{in}\ \  \Omega\times[0,T),\label{1.1a}\\  
u&=0\ \ \text{on} \ \ \partial \Omega\times[0,T),\label{1.1b}\\
u(\cdotp,0)&=u_{0}\ \ \text{in} \ \ \Omega,  \label{1.1c}  
\end{align}
\end{subequations}
where  $ u(x,t) $ is the unknown scalar function with   initial value $ u_{0}(x) $, $ \bm{b}(u)=[u,u]^{T} $ when $ d=2 $ and $ \bm{b}(u)=[u,u,u]^{T} $ when $ d=3 $, $ \nu >0$ is the coefficient of viscosity, and 
$ f (x,t)  $ is the prescribed force.  

The Burgers’ equation, which  can be viewed as a simplified model of Navier-Stokes equation, is a nonlinear partial differential equation that simulates the propagation and reflection of shock waves. It is widely used in many physical fileds such as fluid mechanics, nonlinear acoustics, and gas dynamics. 
In recent  decades, there have developed   many finite element (FE) methods  for Burgers’ equation, such as
  conforming  
  methods \cite{Aksan2005,Arminjon1978,
  Burman2015,Caldwell1981,
  Dogan2004,Lucchi2010,
  Ozics2003
  }, B-spline   methods \cite{Ali1992,  
Mittal2015,Ucar2019}, least-squares  methods \cite{Kutluay2004,Sterck2005,Winterscheidt2010}, mixed  methods \cite{Chen2004,Feng2014,Feng2016,Pany2007,ShiDong2013}, 
discontinuous Galerkin (DG)  methods \cite{Arminjon1981,Shao2011,Zhao2011},  and weak Galerkin methods \cite{Chen2019,Hussein2020}.


The hybridizable discontinuous
Galerkin (HDG) framework, presented in \cite{Cockburn2009} for second order elliptic problems, provides
a unifying strategy for hybridization of finite element methods.   
By the local elimination of the unknowns defined in the interior of elements, the HDG method    leads to a system where the unknowns are only the globally coupled degrees of freedom describing the introduced Lagrange multiplier. 
 In \cite{Cockburn20092}, an implicit high-order HDG method was presented
for nonlinear convection–diffusion equations. The HDG method used piecewise polynomials of degrees $ k(k\geq 0) $ for the approximations of the scalar variable, corresponding flux and trace of the scalar variable. A numerical example for the  two dimensional Burgers' equation with the third-order backward difference formula in time discretization was presented.   We refer the readers to 
  \cite{
  chen2014robust,Cockburn2011HDGstokes,  HanChenWangXie2019Extended, Li-X2016analysis,
  Qiu2016An}
for some     developments  and applications of the HDG method. 

%
%
%

In this paper, we consider an  HDG discretization of the Burgers' equation in two and three dimensions. In the spatial
discretization,  the flux variable $ \bm{q} $, the scalar variable $ u $ and its trace are approximated respectively by piecewise polynomials of degrees $ k-1, k  $ and $ l (l=k,k-1)$ with  $k\geq 1$. In the fully discrete scheme,  a backward Euler scheme is adopted for the temporal derivative. 

The   rest of the paper is arranged as follows. In section 2, we introduce some notations and the  weak problem. Section 3 presents the semi-discrete HDG scheme, prove the existence and uniqueness of the solution and carries out  the error analysis. Section 4 discusses the fully discrete HDG scheme, including the stability, the existence and uniqueness of the solution and the  error estimation. Finally, we provides some numerical examples to verify the theoretical results in section 5.

\section{Notation and weak problem}
For any bounded domain $ D\subset \mathbb{R}^{s}(s=d,d-1) $ and integer $m\geq 0$, let $ H^{m}(D) $ and $ H^{m}_{0}(D) $ denote the usual $ m^{th} $-order Sobolev spaces on $ D $, and $ \|\cdotp\|_{m,D} $, $ |\cdotp|_{m,D} $ denote the norm and semi-norm on these spaces, respectively. We use $ (\cdotp,\cdotp)_{m,D} $ to denote the inner product of $ H^{m}(D) $, with $ (\cdotp,\cdotp)_{D}:=(\cdotp,\cdotp)_{0,D} $. When $ D=\Omega $, we set $ \|\cdotp\|_{m}:=\|\cdotp\|_{m,\Omega} $, $ |\cdotp|_{m}:=|\cdotp|_{m,\Omega} $, and $ (\cdotp,\cdotp):=(\cdotp,\cdotp)_{\Omega} $. In particular, when $ D\subset \mathbb{R}^{d-1} $, we use $ \left\langle \cdotp,\cdotp\right\rangle _{D} $ to replace $ (\cdotp,\cdotp)_{D} $.  We denote by $P_m(D)$    the set of all polynomials on $D$ with degree at most $m$.

Let $\mathcal{T}_h=\cup\{K\}$, consisting of arbitrary open polygons/polyhedrons, be a  partition of the domain $\Omega$. For any $K\in \mathcal{T}_h$,   let $h_K$ be the infimum of the diameters of circles (or spheres) containing $K$ and denote
by $h := \max_{K\in \mathcal{T}_h} h_K$ the mesh size.   We assume that $\mathcal{T}_h$ is shape-regular
in the   sense that the following two assumptions hold (cf. \cite{Xie2016elastic}): 
\begin{enumerate}
\item[(M1)] There exists a positive constant $\theta_*$ such that the following holds: for each element $K\in \mathcal{T}_h$, there exists a point $M_K\in K$ such that $K$ is star-shaped with respect to every point in the circle (or sphere) of center $M_K$ and radius $\theta_*h_K$.
	
\item[(M2)] There exists a positive constant $l_*$ such that for every element $K\in \mathcal{T}_h$, the distance between any two vertexes is no less than $  l_*h_K$. 
\end{enumerate}

We denote by $ \varepsilon_{h} $ the set of all faces in the mesh, and by  $ \varepsilon_{h}^{o} $ and $ \varepsilon_{h}^{\partial} $ the sets of interior edges/faces and boundary edges/faces, respectively. 
For any 
$ e\in\varepsilon_{h} $, we denote by 
 $ h_{e} $ the diameter of   $ e $. We also introduce the following mesh-dependent inner products and norms:
\begin{equation*}\begin{aligned}
(u,v)_{\mathcal{T}_{h}}&:=\sum\limits_{K\in\mathcal{T}_{h}}(u,v)_{K},\ \ \ \ \ \  \|u\|^{2}_{0,\mathcal{T}_{h}}:=\sum\limits_{K\in\mathcal{T}_{h}}\|u\|^{2}_{0,K},\\
\left\langle u,v\right\rangle_{\partial\mathcal{T}_{h}} &:=\sum\limits_{K\in\mathcal{T}_{h}}\left\langle u,v\right\rangle_{\partial K},\ \ \|u\|^{2}_{0,\partial\mathcal{T}_{h}}:=\sum\limits_{K\in\mathcal{T}_{h}}\|u\|^{2}_{0,\partial K}.
\end{aligned}
\end{equation*}

For convenience, we use the notation $a\lesssim b$ to denote that there exists a generic positive constant $C$, independent of the spatial and temporal mesh parameters, $h$ and $ \triangle t$, 
such that $a\leq Cb.$


To give the weak problem for the model (\ref{1.1a})-(\ref{1.1c}), we need to introduce the    bilinear form $ \mathcal{A}(\cdotp,\cdotp) $ and    trilinear form $\mathcal{B}(\cdot,\cdot,\cdot)$:  
\begin{equation*}
\mathcal{A}(u,v):=\nu(\nabla u,\nabla v),\ \ \ \forall u,v\in H_{0}^{1}(\Omega),
\end{equation*}
\begin{equation*}
\mathcal{B}(u,v,w):=-\frac{1}{3}(\bm{b}(u),v\nabla w)+\frac{1}{3}(\bm{b}(u),w\nabla v),\ \ \ \forall u,v,w\in H_{0}^{1}(\Omega).
\end{equation*}
Since  
\begin{equation*}
\begin{aligned}
(\bm{b}(u)\cdotp\nabla u,v)&=\frac{1}{3}(\nabla\cdotp\bm{b}(u),uv)+\frac{2}{3}(\bm{b}(u),v\nabla u)\\
&=-\frac{1}{3}(\bm{b}(u),\nabla(uv))+\frac{2}{3}(\bm{b}(u),v\nabla u)\\
&=-\frac{1}{3}(\bm{b}(u),u\nabla v)+\frac{1}{3}(\bm{b}(u),v\nabla u) \quad  \forall  v\in H_{0}^{1}(\Omega) ,
\end{aligned}
\end{equation*}
it is easy to see that  
\begin{align}
\mathcal{B}(u,v,w)&=-\mathcal{B}(u,w,v) \quad \forall u,v,w\in H_{0}^{1}(\Omega).
\end{align}

With the above notations, the weak form of (\ref{1.1a})-(\ref{1.1c}) is given as follows: find $ u(t)\in H_{0}^{1}(\Omega) $ such that 
\begin{equation}\label{2.4}
\left\{
\begin{aligned}
(u_{t},v)+\mathcal{A}(u,v)+\mathcal{B}(u,u,v)&=(f,v),\ \ \forall v\in H_{0}^{1}(\Omega),\  t\in (0,T]\\
u(0)&=u_{0},\ \ \ x\in \Omega.  
\end{aligned}
\right.
\end{equation} 

From \cite[Section III.Theorem 3.1]{Temam1997}, it holds the following wellposedness result for  the weak problem \eqref{2.4}.
\begin{myLem}
In the case of $d=2$,  given $ f \in L^{\infty}(0,T;L^{2}(\Omega)) $ and $ u_{0}(x) \in L^{2}(\Omega) $,  the problem (\ref{2.4})  admits   a unique solution $ u $  satisfying
\begin{equation*}
u\in C([0,T];L^{2}(\Omega))\cap L^{2}(0,T;H_{0}^{1}(\Omega)).
\end{equation*}
Furthermore, if $ u_{0}(x)\in H_{0}^{1}(\Omega) $, then
\begin{equation*}
u\in C([0,T];H_{0}^{1}(\Omega))\cap L^{2}(0,T;H^{2}(\Omega)\cap H_{0}^{1}(\Omega)) .
\end{equation*}
\end{myLem}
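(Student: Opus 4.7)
The plan is to follow the classical Faedo--Galerkin argument used by Temam for two-dimensional Navier--Stokes-type equations. First I would fix an orthonormal basis $\{\varphi_j\}_{j\ge 1}$ of $L^2(\Omega)$ consisting of eigenfunctions of $-\Delta$ with homogeneous Dirichlet boundary conditions (which is also orthogonal in $H_0^1(\Omega)$), let $V_m:=\operatorname{span}\{\varphi_1,\ldots,\varphi_m\}$, and seek $u_m(t)=\sum_{j=1}^m \alpha_j^m(t)\varphi_j$ solving the ODE system
\begin{equation*}
(u_{m,t},v)+\mathcal{A}(u_m,v)+\mathcal{B}(u_m,u_m,v)=(f,v),\quad \forall v\in V_m,
\end{equation*}
with $u_m(0)$ equal to the $L^2$-projection of $u_0$ onto $V_m$. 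Local-in-time existence of $u_m$ follows from the Cauchy--Peano theorem, and global existence will be a consequence of the a priori bounds below.

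Next I would derive energy estimates. Testing with $v=u_m$ and using the antisymmetry relation $\mathcal{B}(u,v,w)=-\mathcal{B}(u,w,v)$ established in the excerpt (which gives $\mathcal{B}(u_m,u_m,u_m)=0$), one obtains
\begin{equation*}
\tfrac{1}{2}\tfrac{d}{dt}\|u_m\|_0^2+\nu\|\nabla u_m\|_0^2=(f,u_m),
\end{equation*}
and Gronwall together with Young's inequality yields uniform bounds on $u_m$ in $L^\infty(0,T;L^2(\Omega))\cap L^2(0,T;H_0^1(\Omega))$. Using the equation to bound $u_{m,t}$ in $L^{4/3}(0,T;H^{-1}(\Omega))$ (the exponent $4/3$ coming from the two-dimensional Ladyzhenskaya inequality $\|v\|_{L^4}^2\lesssim\|v\|_0\|\nabla v\|_0$ applied to the nonlinear term), I would invoke the Aubin--Lions compactness lemma to extract a subsequence converging strongly in $L^2(0,T;L^2(\Omega))$ and weakly-$*$ and weakly in the respective spaces. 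Strong $L^2$ convergence is exactly what is needed to pass to the limit in the nonlinear trilinear form $\mathcal{B}$, yielding a weak solution $u$ with the asserted regularity; the $C([0,T];L^2(\Omega))$ continuity is the standard consequence of $u\in L^2(0,T;H_0^1(\Omega))$ and $u_t\in L^{4/3}(0,T;H^{-1}(\Omega))$.

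For uniqueness, I would take two solutions $u,\tilde u$, set $w:=u-\tilde u$, and test the difference equation with $w$. The critical estimate is the two-dimensional bound
\begin{equation*}
|\mathcal{B}(w,\tilde u,w)|\lesssim \|w\|_{L^4}\|\nabla \tilde u\|_0\|w\|_{L^4}\lesssim \|w\|_0\|\nabla w\|_0\|\nabla \tilde u\|_0,
\end{equation*}
so that absorbing $\tfrac{\nu}{2}\|\nabla w\|_0^2$ on the left produces $\tfrac{d}{dt}\|w\|_0^2 \lesssim \nu^{-1}\|\nabla\tilde u\|_0^2\|w\|_0^2$, and Gronwall (together with $\nabla\tilde u\in L^2(0,T;L^2)$) forces $w\equiv 0$. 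This step is essentially where the dimension $d=2$ is used in an essential way.

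Finally, under the stronger hypothesis $u_0\in H_0^1(\Omega)$, I would test the Galerkin equations with $v=-\Delta u_m$ (legitimate because $V_m$ is spanned by Dirichlet eigenfunctions) to get
\begin{equation*}
\tfrac{1}{2}\tfrac{d}{dt}\|\nabla u_m\|_0^2+\nu\|\Delta u_m\|_0^2=-\mathcal{B}(u_m,u_m,\Delta u_m)+(f,-\Delta u_m),
\end{equation*}
and control the trilinear term by Ladyzhenskaya and Young: $|\mathcal{B}(u_m,u_m,\Delta u_m)|\lesssim \|u_m\|_{L^4}\|\nabla u_m\|_{L^4}\|\Delta u_m\|_0\lesssim \|u_m\|_0^{1/2}\|\nabla u_m\|_0\|\Delta u_m\|_0^{3/2}$, which after Young absorbs into $\tfrac{\nu}{2}\|\Delta u_m\|_0^2$ and leaves a term of the form $C\nu^{-3}\|u_m\|_0^2\|\nabla u_m\|_0^4$; combined with the already-established $L^\infty(L^2)\cap L^2(H^1)$ bound and Gronwall, this delivers the uniform bounds in $L^\infty(0,T;H_0^1)\cap L^2(0,T;H^2)$, and passage to the limit as before gives the claimed regularity. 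The main obstacle throughout is the control of the nonlinear term $\mathcal{B}$ in both the uniqueness and higher-regularity estimates, which relies crucially on the two-dimensional Ladyzhenskaya inequality.
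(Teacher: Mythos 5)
The paper does not actually prove this lemma; it is quoted directly from Temam (Section III, Theorem 3.1 of the cited book), and the proof given there is precisely the Faedo--Galerkin argument you outline, so your proposal follows essentially the same route as the cited source and is correct in all its main steps (energy estimate via the antisymmetry of $\mathcal{B}$, Aubin--Lions compactness, the uniqueness estimate via $|\mathcal{B}(w,\tilde u,w)|\lesssim\|w\|_{0}\|\nabla w\|_{0}\|\nabla \tilde u\|_{0}$, and the $H^2$ bound obtained by testing with $-\Delta u_m$, which is legitimate for the spectral basis). One correction: in two dimensions the Ladyzhenskaya inequality $\|v\|_{L^4}^2\lesssim\|v\|_{0}\|\nabla v\|_{0}$ actually gives $u_{m,t}$ bounded in $L^{2}(0,T;H^{-1}(\Omega))$, not merely $L^{4/3}$ (the exponent $4/3$ belongs to the three-dimensional interpolation); this is not cosmetic, because the strong continuity $u\in C([0,T];L^{2}(\Omega))$ is deduced from the Lions--Magenes lemma, which needs $u\in L^{2}(0,T;H_{0}^{1})$ together with $u_{t}\in L^{2}(0,T;H^{-1})$, whereas an $L^{4/3}$-in-time bound on $u_t$ would only yield weak continuity into $L^{2}(\Omega)$.
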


\section{Semi-discrete HDG  method}
\subsection{Semi-discrete scheme}

Introduce a new variable $ \bm{q}=-\nabla u $,then  we  rewrite (\ref{1.1a})-(\ref{1.1c}) as
\begin{equation} \label{2.5}
	\left\{
	\begin{aligned}
\bm{q}+\nabla u &= 0,\ \ \text{in}\ \ \Omega\times (0,T],\\
u_{t}+\nu\nabla\cdotp\bm{q}+\bm{b}(u)\cdotp\nabla u&=f,\ \ \text{in}\ \ \Omega\times (0,T],\\
u&=0,\ \ \text{on} \ \ \partial \Omega\times[0,T],\\
u(\cdotp,0)&=u_{0},\ \ \text{in} \ \ \Omega.
\end{aligned} 
\right.
\end{equation}

For any integer $ k\geq 1  $ and  $ l=k-1,k $, we introduce the following  finite element spaces: 
\begin{equation*}
\begin{aligned}
\bm{Q}_{h}&:=\{\bm{v}_{h}\in [L^{2}(\Omega)]^{d}:\bm{v}_{h}|_{K}\in [P_{k-1}(K)]^{d},\forall K\in \mathcal{T}_{h}\},\\
V_{h}&:=\{v_{h}\in L^{2}(\Omega):v_{h}|_{K}\in P_{k}(K),\forall K\in \mathcal{T}_{h}\},\\
\widehat{V}_{h}&:=\{\widehat{v}_{h}\in L^{2}(\varepsilon_{h}):\widehat{v}_{h}|_{E}\in P_{l}(E),\forall E\in \varepsilon_{h},\widehat{v}_{h}|_{\varepsilon_{h}^{\partial}}=0\}.
\end{aligned}
\end{equation*}

Let us introduce  the standard $ L^{2} $ projections $ \boldsymbol{\Pi}_{k-1}^{o}:[L^{2}(\Omega)]^{d}\rightarrow \bm{Q}_{h} $, $ \Pi_{k}^{o}: L^{2}(\Omega)\rightarrow V_{h} $, and
$ \Pi_{l}^{\partial}: L^{2}(\varepsilon_{h})\rightarrow \widehat{V}_{h} $, which satisfy
\begin{equation*}
\begin{aligned}
(\boldsymbol{\Pi}_{k-1}^{o}\bm{q},\bm{r})_{K}&=(\bm{q},\bm{r})_{K},\ \ \ \ \forall \bm{r}\in [P_{k-1}(K)]^{d},\\
(\Pi_{k}^{o}u,w)_{K}&=(u,w)_{K},\ \ \ \forall w\in P_{k}(K),\\
\left\langle \Pi_{l}^{\partial}u,\mu\right\rangle _{e}&=\left\langle u,\mu \right\rangle_{e},\ \ \ \ \forall \mu\in P_{l}(e).
\end{aligned}
\end{equation*}
Then the  semi-discrete HDG scheme for (\ref{2.5}) reads as follows: find $ (\bm{q}_{h},u_{h},\widehat{u}_{h})\in \bm{Q}_{h}\times V_{h}\times\widehat{V}_{h} $ such that 
\begin{equation} \label{2.10}
\left\{
\begin{aligned}
\mathcal{A}_{h}(\bm{q}_{h},\bm{r})-\mathcal{C}_{h}(u_{h},\widehat{u}_{h};\bm{r})&=0,\\
(u_{h,t},w)_{\mathcal{T}_{h}}+\nu \mathcal{C}_{h}(w,\mu;\bm{q}_{h})
+\mathcal{S}_{h}(u_{h},\widehat{u}_{h};w,\mu)
+\mathcal{B}_{h}(u_{h};u_{h},\widehat{u}_{h};w,\mu)&=(f,w)_{\mathcal{T}_{h}},\\
u_{h}(0)&=\Pi_{k}^{o}u_{0}, 
\end{aligned}
\right.
\end{equation}
 for all $ (\bm{r},w,\mu)\in \bm{Q}_{h}\times V_{h}\times\widehat{V}_{h} $. Here 
\begin{equation*}
\begin{aligned}
\mathcal{A}_{h}(\bm{q}_{h},\bm{r})&:=(\bm{q}_{h},\bm{r})_{\mathcal{T}_{h}},\\
\mathcal{C}_{h}(u_{h},\widehat{u}_{h};\bm{r})&:=(u_{h},\nabla\cdotp \bm{r})_{\mathcal{T}_{h}}-\left\langle \widehat{u}_{h},\bm{r}\cdotp\bm{n}\right\rangle_{\partial\mathcal{T}_{h}},\\
\mathcal{S}_{h}(u_{h},\widehat{u}_{h};w,\mu)&:=\nu\left\langle\tau(\Pi_{l}^{\partial}u_{h}-\widehat{u}_{h}),\Pi_{l}^{\partial}w-\mu \right\rangle_{\partial\mathcal{T}_{h}},\\
\mathcal{B}_{h}(v_{h};u_{h},\widehat{u}_{h};w,\mu)&:=-\frac{1}{3}(\bm{b}(v_{h})u_{h},\nabla w)_{\mathcal{T}_{h}}+\frac{1}{3}(\bm{b}(v_{h})\cdotp\nabla u_{h},w)_{\mathcal{T}_{h}}\\
&\ \ \ -\frac{1}{3}\left\langle \bm{b}(v_{h})\cdotp\bm{n}u_{h},\mu\right\rangle_{\partial\mathcal{T}_{h}}+\frac{1}{3}\left\langle \bm{b}(v_{h})\cdotp\bm{n}\widehat{u}_{h},w\right\rangle_{\partial\mathcal{T}_{h}},
\end{aligned}
\end{equation*}
and   $ \tau|_{\partial K}=h_{K}^{-1} $.


We introduce an operator $ \mathcal{K}_{h}:V_{h}\times\widehat{V}_{h}\rightarrow \bm{Q}_{h} $ defined by
\begin{equation*}
(\mathcal{K}_{h}(w,\mu),\bm{r})_{\mathcal{T}_{h}}:=-(w,\nabla_{h}\cdotp \bm{r})_{\mathcal{T}_{h}}+\left\langle \mu,\bm{r}\cdotp\bm{n}\right\rangle_{\partial\mathcal{T}_{h}},\ \forall \bm{r}\in \bm{Q}_{h}.
\end{equation*}
It is easy to see that $ \mathcal{K}_{h} $ is well defined. From (\ref{2.10}) we can immediately get
\begin{equation}\label{qh-Kh}
\bm{q}_{h}=-\mathcal{K}_{h}(u_{h},\widehat{u}_{h}).
\end{equation}
Hence,     the system (\ref{2.10}) can be reduced to   the following one:
find $ (\bm{q}_{h},u_{h},\widehat{u}_{h})\in \bm{Q}_{h}\times V_{h}\times\widehat{V}_{h}  $ such that, for all $ (w,\mu)\in   V_{h}\times\widehat{V}_{h} $, 
\begin{equation}
\left\{
\begin{aligned}
\bm{q}_{h}+\mathcal{K}_{h}(u_{h},\widehat{u}_{h})&=0,\\
(u_{h,t},w)_{\mathcal{T}_{h}}+\nu(\mathcal{K}_{h}(u_{h},\widehat{u}_{h}),\mathcal{K}_{h}(w,\mu))_{\mathcal{T}_{h}}+\mathcal{S}_{h}(u_{h},\widehat{u}_{h};w,\mu)
+\mathcal{B}_{h}(u_{h};u_{h},\widehat{u}_{h};w,\mu)&=(f,w)_{\mathcal{T}_{h}},\\
u_{h}(0)&=\Pi_{k}^{o}u_{0}. \label{2.12}
\end{aligned}
\right.
\end{equation}

\begin{rem}
	We note that the degree of approximation polynomials in $ \bm{Q}_{h} $ can be chosen as $ k $. Such a
	choice makes no difference in the subsequent analysis, and then leads to the same convergence rates as
	the degree $ k-1 $.  
\end{rem}


\subsection{Basic results}
We introduce the following semi-norm on $ V_{h}\times\widehat{V}_{h} $:  For $ \forall (u_{h},\widehat{u}_{h})\in V_{h}\times\widehat{V}_{h},$
\begin{equation*}
|\|(u_{h},\widehat{u}_{h})\||^{2}:=\|\mathcal{\mathcal{K}}_{h}(u_{h},\widehat{u}_{h})\|^{2}_{0,\mathcal{T}_{h}} +  \|\tau^{\frac{1}{2}}(\Pi_{l}^{\partial}u_{h}-\widehat{u}_{h})\|^{2}_{0,\partial\mathcal{T}_{h}}.
\end{equation*}
We can show that $ |\|(\cdotp,\cdotp)\|| $ is a norm.   In fact, if $ |\|(u_{h},\widehat{u}_{h})\||=0 $, then
$ \nabla u_{h}=0 $ and   $ \Pi_{l}^{\partial}u_{h}|_{\partial K}=u_{h}|_{\partial K}=\widehat{u}_{h}|_{\partial K}, $
which means that $ u_{h} $ is piecewise constant with respect to $ \mathcal{T}_{h}  $ and $ u_{h}=\widehat{u}_{h} $ on $ \varepsilon_{h} $. Since $ \widehat{u}_{h}=0 $ on $ \partial\Omega $, then $ u_{h}=\widehat{u}_{h}=0 $.

By using the trace theorem, the inverse inequality, and scaling arguments, we can easily get the following lemma.
\begin{myLem} \label{L3.1}
	\cite{Shi2013}For all $ K\in\mathcal{T}_{h}, \omega\in W^{1,\tilde{q}}(K) $, and $ 1\leq \tilde{q}\leq \infty $, we have 
	\begin{equation*}
	\|\omega\|_{0,\tilde{q},\partial K}\lesssim h_{K}^{-\frac{1}{\tilde{q}}}\|\omega\|_{0,\tilde{q},K}+h_{K}^{1-\frac{1}{\tilde{q}}}|\omega|_{1,\tilde{q},K}.
	\end{equation*}
	In particular, for all $ \omega\in P_{k}(K) $,
	\begin{equation*}
	\|\omega\|_{0,\tilde{q},\partial K}\lesssim h_{K}^{-\frac{1}{\tilde{q}}}\|\omega\|_{0,\tilde{q},K}.
	\end{equation*}
\end{myLem}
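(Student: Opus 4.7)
The plan is to prove the trace inequality by a standard scaling argument on a simplicial decomposition of $K$, and then to obtain the polynomial case by combining it with the usual inverse inequality.

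First I would decompose $K$ into a uniformly bounded number of shape-regular simplices $\{T_i\}_{i=1}^{N_K}$ by joining the point $M_K$ furnished by (M1) to each face of $\partial K$. By (M1) each $T_i$ contains a ball of radius $\simeq \theta_* h_K$ and has diameter $\simeq h_K$; by (M2) the number of vertices of $K$, and hence $N_K$, is bounded in terms of $\theta_*$ and $l_*$ alone. Moreover, every face of $\partial K$ is a face of exactly one $T_i$ (the face opposite to $M_K$), so the boundary quantities on $\partial K$ are recovered by summing boundary quantities over the $T_i$.

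Next, I would pass to a reference simplex $\hat{T}$ through an affine map $F_{T_i}:\hat{T}\to T_i$ whose Jacobian satisfies $|\det DF_{T_i}|\simeq h_K^{d}$, whose face Jacobian scales like $h_K^{d-1}$, and with $\|DF_{T_i}^{-1}\|\simeq h_K^{-1}$, all thanks to the shape-regularity of $T_i$. The classical continuous trace theorem on $\hat{T}$ reads
$$\|\hat{\omega}\|_{0,\tilde q,\partial\hat{T}}\lesssim \|\hat{\omega}\|_{0,\tilde q,\hat{T}}+|\hat{\omega}|_{1,\tilde q,\hat{T}},\qquad 1\le \tilde q\le \infty,$$
for $\hat{\omega}\in W^{1,\tilde q}(\hat{T})$. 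Pulling back $\omega|_{T_i}$ and applying the standard change-of-variables formulas for $L^{\tilde q}$ and $W^{1,\tilde q}$ norms yields
$$\|\omega\|_{0,\tilde q,\partial T_i\cap\partial K}\lesssim h_K^{-1/\tilde q}\|\omega\|_{0,\tilde q,T_i}+h_K^{1-1/\tilde q}|\omega|_{1,\tilde q,T_i}.$$
Summing over $i$ (a uniformly bounded sum) and using $\bigcup_i T_i = K$ with no overlap gives the first asserted inequality.

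For the polynomial refinement, I would combine the estimate just proved with the inverse inequality $|\omega|_{1,\tilde q,K}\lesssim h_K^{-1}\|\omega\|_{0,\tilde q,K}$ for $\omega\in P_k(K)$, which is obtained by the same simplicial subdivision together with the well-known inverse estimate on $\hat{T}$. Both terms on the right-hand side then scale as $h_K^{-1/\tilde q}\|\omega\|_{0,\tilde q,K}$, yielding the sharper bound. The main obstacle is structural rather than analytic: since $K$ is a generic polygon or polyhedron one cannot invoke the reference-element trace theorem on $K$ directly, and the role of (M1)-(M2) is precisely to ensure that the auxiliary simplicial decomposition and all subsequent scaling constants depend only on $\theta_*$ and $l_*$, so that the hidden constants in $\lesssim$ are mesh-independent.
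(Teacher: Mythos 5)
Your argument is correct and follows exactly the route the paper indicates: it cites \cite{Shi2013} and notes that the lemma follows from the trace theorem, the inverse inequality, and scaling arguments, which is precisely the scaled-trace-plus-inverse-estimate argument you carry out (your simplicial subdivision via $M_K$ under (M1)--(M2) is the standard way to make the scaling rigorous on general polygonal/polyhedral elements). No discrepancy to report.
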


\begin{myLem} \label{L3.2}
	For any $ (w,\mu)\in V_{h}\times\widehat{V}_{h} $ we have
	\begin{equation}
	\|\nabla_{h} w\|_{0,\mathcal{T}_{h}}+\|\tau^{\frac{1}{2}}(w-\mu)\|_{0,\partial\mathcal{T}_{h}}\lesssim |\|(w,\mu)\||. \label{3.1}
	\end{equation}
\end{myLem}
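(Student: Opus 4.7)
My plan is to test the defining relation of $\mathcal{K}_h(w,\mu)$ against $\bm{r}=\nabla_h w$, which lies in $\bm{Q}_h$ since $w\in V_h$. Integrating by parts elementwise in the definition gives
\[
(\mathcal{K}_h(w,\mu),\nabla_h w)_{\mathcal{T}_h}=\|\nabla_h w\|^2_{0,\mathcal{T}_h}-\langle w-\mu,\nabla_h w\cdot\bm{n}\rangle_{\partial\mathcal{T}_h}.
\]
The key step is to insert $\Pi_l^\partial w$ and exploit the orthogonality
\[
\langle w-\Pi_l^\partial w,\nabla_h w\cdot\bm{n}\rangle_{\partial\mathcal{T}_h}=0.
\]
When $l=k$, the trace of any $w\in V_h$ on a face $e$ lies in $P_k(e)$, so $\Pi_l^\partial w=w$ on $\partial\mathcal{T}_h$. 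When $l=k-1$, each face $e$ of the polytopal element $K$ is planar, so the outward normal is constant on $e$ and hence $\nabla_h w\cdot\bm{n}|_e\in P_{k-1}(e)$, whereas $w-\Pi_{k-1}^\partial w$ is $L^2(e)$-orthogonal to $P_{k-1}(e)$ by the very definition of the projection.

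Using this orthogonality, I rewrite
\[
\|\nabla_h w\|^2_{0,\mathcal{T}_h}=(\mathcal{K}_h(w,\mu),\nabla_h w)_{\mathcal{T}_h}+\langle\Pi_l^\partial w-\mu,\nabla_h w\cdot\bm{n}\rangle_{\partial\mathcal{T}_h},
\]
and bound both right-hand terms by Cauchy--Schwarz. For the boundary term, Lemma \ref{L3.1} applied to $\nabla w|_K\in[P_{k-1}(K)]^d$ yields $\|\tau^{-1/2}\nabla_h w\cdot\bm{n}\|_{0,\partial\mathcal{T}_h}\lesssim\|\nabla_h w\|_{0,\mathcal{T}_h}$. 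Absorbing one factor of $\|\nabla_h w\|_{0,\mathcal{T}_h}$ produces $\|\nabla_h w\|_{0,\mathcal{T}_h}\lesssim|\|(w,\mu)\||$.

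For the remaining contribution to \eqref{3.1}, I use the triangle inequality on the splitting $w-\mu=(w-\Pi_l^\partial w)+(\Pi_l^\partial w-\mu)$. The second piece contributes $\|\tau^{1/2}(\Pi_l^\partial w-\mu)\|_{0,\partial\mathcal{T}_h}\leq|\|(w,\mu)\||$ directly. The first piece vanishes identically when $l=k$; for $l=k-1$, the best-approximation property of $\Pi_{k-1}^\partial$ combined with the standard $L^2$-projection error on $K$ and Lemma \ref{L3.1} produces $\|w-\Pi_{k-1}^\partial w\|_{0,e}\lesssim h_K^{1/2}\|\nabla w\|_{0,K}$. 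Weighting by $\tau^{1/2}=h_K^{-1/2}$ and summing over faces reduces this contribution to $\|\nabla_h w\|_{0,\mathcal{T}_h}$, already controlled above.

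The main obstacle I expect is the orthogonality step, whose validity hinges on $\nabla_h w\cdot\bm{n}|_e\in P_{k-1}(e)$; this uses planarity of mesh faces so that the constant face normal commutes with the polynomial restriction. The shape-regularity hypotheses (M1)--(M2) are also invoked when summing over faces to ensure a uniformly bounded number of faces per element, after which all remaining estimates are routine applications of Cauchy--Schwarz and inverse/trace inequalities.
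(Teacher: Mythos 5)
Your proof is correct and follows essentially the same route as the paper's: test the definition of $\mathcal{K}_h$ with $\bm{r}=\nabla_h w$, then apply the Cauchy--Schwarz/H\"{o}lder inequality and the discrete trace/inverse inequality of Lemma \ref{L3.1}. You are in fact more careful than the paper's one-line argument, since you make explicit the orthogonality $\langle w-\Pi_l^{\partial}w,\nabla_h w\cdot\bm{n}\rangle_{\partial\mathcal{T}_h}=0$ (valid because $\nabla_h w\cdot\bm{n}|_e\in P_{k-1}(e)\subseteq P_l(e)$ on planar faces), which is genuinely needed to pass from $w-\mu$ to the term $\Pi_l^{\partial}w-\mu$ appearing in the triple norm, and you correctly handle the residual piece $\tau^{1/2}(w-\Pi_l^{\partial}w)$ via best approximation.
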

\begin{proof}
	For any $ (w,\mu)\in V_{h}\times\widehat{V}_{h} $, by the definition of $ \mathcal{K}_{h} $, it holds
	\begin{equation*}
	\begin{aligned}
	(\mathcal{K}_{h}(w,\mu),\bm{r})_{\mathcal{T}_{h}}&=-(w,\nabla_{h}\cdotp \bm{r})_{\mathcal{T}_{h}}+\left\langle \mu,\bm{r}\cdotp\bm{n}\right\rangle_{\partial\mathcal{T}_{h}}\\
	&=(\nabla_{h}w,\bm{r})_{\mathcal{T}_{h}}+\left\langle \mu-w,\bm{r}\cdotp\bm{n}\right\rangle_{\partial\mathcal{T}_{h}},
	\end{aligned}
	\end{equation*}
	for all $ \bm{r}\in \bm{Q}_{h} $. By taking $ \bm{r}=\nabla_{h}w $, the estimate (\ref{3.1}) follows from the H\"{o}lder’s inequality and the inverse inequality.
\end{proof}

\begin{myLem}\cite{Shi2013} \label{L3.3}
	The embedding relationship 
	\begin{equation*}
	W^{1,2}(\Omega)\hookrightarrow W^{0,\tilde{q}}(\Omega)
	\end{equation*}
	holds for $ \tilde{q} $ satisfying $ 2\leq \tilde{q}<\infty $ when $ d=2 $, $ 2\leq \tilde{q}\leq6 $ when $ d=3 $.
\end{myLem}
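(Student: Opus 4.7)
The statement is the classical Sobolev embedding $H^{1}(\Omega) \hookrightarrow L^{\tilde{q}}(\Omega)$ on a bounded polyhedral (hence Lipschitz) domain, so my plan is to piece together the standard three-step argument: first extend, then prove the inequality on all of $\mathbb{R}^{d}$, then restrict back to $\Omega$. Specifically, I would invoke the existence of a bounded linear extension operator $E:W^{1,2}(\Omega)\to W^{1,2}(\mathbb{R}^{d})$ (available because $\Omega$ is polyhedral and therefore satisfies the uniform cone condition), reducing the problem to proving $\|u\|_{L^{\tilde{q}}(\mathbb{R}^{d})}\lesssim \|u\|_{W^{1,2}(\mathbb{R}^{d})}$ for every $u\in C_{c}^{\infty}(\mathbb{R}^{d})$, and then using density.

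For the core inequality on $\mathbb{R}^{d}$, I would split by dimension. When $d=3$ and $p=2<d$, the plan is to first establish the Gagliardo–Nirenberg inequality $\|u\|_{L^{d/(d-1)}(\mathbb{R}^{d})} \lesssim \|\nabla u\|_{L^{1}(\mathbb{R}^{d})}$ by writing $u(x)$ as an iterated integral of $\partial_{i} u$ along each axis and applying the generalized Hölder inequality in the product. Applying this inequality to $|u|^{\gamma}$ with a suitably chosen $\gamma>1$ and invoking Hölder on the right-hand side then upgrades it to the Sobolev inequality $\|u\|_{L^{2^{*}}(\mathbb{R}^{3})} \lesssim \|\nabla u\|_{L^{2}(\mathbb{R}^{3})}$ with sharp exponent $2^{*}=6$. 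The intermediate range $\tilde{q}\in[2,6]$ then follows by straightforward Hölder interpolation between $L^{2}$ and $L^{6}$, using boundedness of $\Omega$.

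When $d=2$ the situation is the critical case $p=d$, and the GNS iteration used above breaks down at the final step: the target exponent formally becomes $+\infty$, but the embedding into $L^{\infty}$ genuinely fails. Here the plan would instead be to apply the subcritical Gagliardo–Nirenberg inequality $\|u\|_{L^{\tilde{q}}(\mathbb{R}^{2})} \lesssim \|\nabla u\|_{L^{2}(\mathbb{R}^{2})}^{\theta}\|u\|_{L^{2}(\mathbb{R}^{2})}^{1-\theta}$ for $\theta = 1-2/\tilde{q}$, proven by the same layer-cake / Hölder argument restricted to any finite $\tilde{q}\in[2,\infty)$, which avoids the logarithmic divergence at $\tilde{q}=\infty$.

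The main obstacle is the critical case $d=2$: one has to be careful that the implied constant blows up as $\tilde{q}\to\infty$, so no single uniform estimate covers the endpoint. Once that is handled for each fixed $\tilde{q}<\infty$, combining with the extension step gives $\|u\|_{L^{\tilde{q}}(\Omega)} \lesssim \|Eu\|_{W^{1,2}(\mathbb{R}^{d})} \lesssim \|u\|_{W^{1,2}(\Omega)}$, which is the embedding claimed. Given that every ingredient is classical, I would not reprove the extension theorem or the one-dimensional slicing lemma in detail and would simply cite them, matching the style of the paper, which itself only cites the result.
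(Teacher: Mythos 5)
The paper offers no proof of this lemma at all: it is stated as a known result and attributed to the reference [Shi2013] (it is the classical Sobolev embedding theorem), so there is no in-paper argument to compare against. Your sketch is a correct outline of the standard textbook proof: extension to $\mathbb{R}^{d}$, the Gagliardo--Nirenberg--Sobolev inequality $\|u\|_{L^{d/(d-1)}}\lesssim\|\nabla u\|_{L^{1}}$ via slicing and H\"older, the bootstrap to the sharp exponent $2^{*}=6$ when $d=3$, interpolation for the intermediate range on a bounded domain, and the subcritical Ladyzhenskaya-type inequality $\|u\|_{L^{\tilde q}(\mathbb{R}^{2})}\lesssim\|\nabla u\|_{L^{2}}^{\theta}\|u\|_{L^{2}}^{1-\theta}$ with $\theta=1-2/\tilde q$ in the critical case $d=2$, correctly excluding the endpoint $\tilde q=\infty$. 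Your identification of where the constant degenerates as $\tilde q\to\infty$ is exactly the right caution, and is consistent with the fact that the lemma's constant is allowed to depend on $\tilde q$. One small imprecision: a general polyhedral domain need not be Lipschitz (the classical two-brick example), so the parenthetical ``hence Lipschitz'' is not strictly justified; however, since you also invoke the uniform cone condition, which polyhedral domains do satisfy and which suffices for both the Calder\'on extension theorem and the embedding, the argument goes through unchanged. Given that the paper itself only cites this result, your level of detail is more than adequate.
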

\begin{myLem} \label{L3.3.1}
	\cite{Karakashian2007}There exists an interpolation operator, called Oswald interpolation, $ I_{h}: V_{h}\rightarrow V_{h}\cap H_{0}^{1}(\Omega) $, such that, for any $ w\in V_{h} $,
\begin{align} 
\sum_{K\in\mathcal{T}_{h}}\|w-I_{h}w\|^{2}_{0,K}&\lesssim \sum_{e\in\varepsilon_{h}}h_{e}\|[[w]]\|^{2}_{0,e}, \label{3.1.1}\\
\sum_{K\in\mathcal{T}_{h}}|w-I_{h}w|^{2}_{1,K}&\lesssim \sum_{e\in\varepsilon_{h}}h_{e}^{-1}\|[[w]]\|^{2}_{0,e}. \label{3.1.2}
\end{align}
\end{myLem}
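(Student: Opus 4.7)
The plan is to establish the lemma by constructing $I_h$ via nodal averaging at the Lagrange-type degrees of freedom of $V_h \cap H_0^1(\Omega)$ and then relating the resulting nodal discrepancy to the edge/face jumps of $w$. Since the estimates are quoted directly from \cite{Karakashian2007}, my goal is to lay out the structural steps rather than reproduce the classical argument in full.

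First I would fix a set of Lagrange-like points $\{x_i\}$ whose associated nodal basis spans $V_h \cap H_0^1(\Omega)$, and define
\begin{equation*}
(I_h w)(x_i) := \frac{1}{|\omega_i|} \sum_{K \in \omega_i} (w|_K)(x_i) \text{ for } x_i \in \Omega, \qquad (I_h w)(x_i) := 0 \text{ for } x_i \in \partial\Omega,
\end{equation*}
where $\omega_i$ is the set of elements containing $x_i$. Because the averaged nodal values coincide on shared faces, the resulting function lies in $V_h \cap H_0^1(\Omega)$.

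Next, on a single element $K$ I would expand $(w - I_h w)|_K$ in the local nodal basis. Norm equivalence on $P_k$, rescaled to $K$ using the shape-regularity hypotheses (M1)-(M2), yields
\begin{equation*}
\|w - I_h w\|_{0,K}^2 \lesssim h_K^d \sum_{x_i \in \overline{K}} \bigl( (w|_K)(x_i) - (I_h w)(x_i) \bigr)^2.
\end{equation*}
By the averaging definition, each nodal discrepancy is a convex combination of pointwise jumps $[[w]](x_i)$ across faces $e$ meeting $x_i$. Combining a local inverse inequality on $e$ with Lemma \ref{L3.1} gives the pointwise-to-$L^2$ bound $|[[w]](x_i)|^2 \lesssim h_e^{-(d-1)} \|[[w]]\|_{0,e}^2$; multiplying by the prefactor $h_K^d$, using the finite-overlap of element patches from (M1) and $h_K \sim h_e$ from (M2), and then summing over $K$ yields (\ref{3.1.1}). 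The gradient bound (\ref{3.1.2}) follows at once from the element-wise inverse inequality $|w - I_h w|_{1,K} \lesssim h_K^{-1} \|w - I_h w\|_{0,K}$, which turns the factor $h_e$ on the right-hand side of (\ref{3.1.1}) into $h_e^{-1}$ after one more application of $h_K \sim h_e$.

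The main obstacle in the present setting is that the classical Oswald proofs (including \cite{Karakashian2007}) are written for simplicial meshes, where Lagrange nodes naturally align on shared faces. For the arbitrary polygons/polyhedra admitted under (M1)-(M2) one either has to descend to a shape-regular simplicial sub-triangulation (whose cardinality per element is controlled by (M2)) before invoking the standard node-averaging argument, or replace pointwise averaging by a face-wise $L^2$-projection-based averaging. Either route preserves the bookkeeping above, but the continuous gluing across subfaces must be verified carefully; this is where I would expect to spend most of the technical effort.
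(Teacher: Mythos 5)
The paper offers no proof of this lemma at all --- it is quoted verbatim from \cite{Karakashian2007} --- and your sketch is a faithful outline of the standard node-averaging argument given in that reference: define $I_h$ by averaging nodal values (zero on $\partial\Omega$), bound the local error by nodal discrepancies via scaling, convert pointwise jumps to $L^2$ face norms with the inverse estimate $|[[w]](x_i)|^2\lesssim h_e^{-(d-1)}\|[[w]]\|_{0,e}^2$, and obtain \eqref{3.1.2} from \eqref{3.1.1} by an elementwise inverse inequality. Your closing caveat is the one substantive point: the cited result is stated for simplicial meshes, whereas this paper admits arbitrary polygons/polyhedra under (M1)--(M2), so an auxiliary shape-regular sub-triangulation (or a face-averaging variant) is genuinely needed to justify the lemma in the generality claimed here --- a gap the paper itself does not address.
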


\begin{myLem} 
 \label{L3.4}
	It holds
	\begin{equation}
	\|w\|_{0,\tilde{q},\mathcal{T}_{h}}
	\lesssim |\|(w,\mu)\||, \quad \forall (w,\mu)\in V_{h}\times\widehat{V}_{h}, \label{3.2}
	\end{equation}
	where $ 2\leq \tilde{q}<\infty $ when $ d=2 $ and  $ 2\leq \tilde{q}\leq6 $ when $ d=3 $.
\end{myLem}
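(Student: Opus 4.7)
The plan is to use the Oswald interpolant $w_c := I_h w \in V_h \cap H_0^1(\Omega)$ from Lemma \ref{L3.3.1} as a bridge: the continuous piece $w_c$ is handled by the Sobolev embedding of Lemma \ref{L3.3}, while the piecewise polynomial remainder $w - w_c$ is controlled directly by the jumps of $w$. Splitting by the triangle inequality,
\[
\|w\|_{0,\tilde{q},\mathcal{T}_{h}} \leq \|w_c\|_{0,\tilde{q},\Omega} + \|w - w_c\|_{0,\tilde{q},\mathcal{T}_{h}},
\]
I treat the two terms separately.

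For the continuous piece, Lemma \ref{L3.3} together with Poincar\'e's inequality yields $\|w_c\|_{0,\tilde{q},\Omega} \lesssim |w_c|_{1,\Omega}$, and then Lemma \ref{L3.2} combined with \eqref{3.1.2} gives
\[
|w_c|_{1,\Omega} \leq \|\nabla_h w\|_{0,\mathcal{T}_{h}} + |w - w_c|_{1,h} \lesssim |\|(w,\mu)\|| + \Big(\sum_{e \in \varepsilon_h} h_e^{-1}\|[[w]]\|_{0,e}^2\Big)^{1/2}.
\]
The jump sum is absorbed into the triple norm by writing, on each interior edge $e = \partial K^+ \cap \partial K^-$, $[[w]]|_e = (w^+ - \mu) - (w^- - \mu)$, and on each boundary edge $[[w]]|_e = w - \mu$ since $\mu$ vanishes there; recalling $\tau = h_K^{-1}$, this yields $\sum_e h_e^{-1}\|[[w]]\|_{0,e}^2 \lesssim \|\tau^{1/2}(w-\mu)\|_{0,\partial \mathcal{T}_h}^2 \lesssim |\|(w,\mu)\||^2$ via Lemma \ref{L3.2}.

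For the piecewise polynomial remainder $v := w - w_c \in V_h$, I apply the continuous Sobolev embedding on a reference element and rescale to each $K$, obtaining
\[
\|v\|_{0,\tilde{q},K}^{\tilde{q}} \lesssim h_K^{d - d\tilde{q}/2}\|v\|_{0,K}^{\tilde{q}} + h_K^{d + \tilde{q}(1 - d/2)}|v|_{1,K}^{\tilde{q}}.
\]
Because the Oswald operator of \cite{Karakashian2007} is built by local nodal averaging, \eqref{3.1.1}--\eqref{3.1.2} hold in localized form with the jump sum restricted to a bounded-cardinality patch $\omega(K)$; together with $h_e \lesssim h_K$ on $\omega(K)$ this gives $\|v\|_{0,K}^2 \lesssim h_K^2 \sum_{e \in \omega(K)} h_e^{-1}\|[[w]]\|_{0,e}^2$ and $|v|_{1,K}^2 \lesssim \sum_{e \in \omega(K)} h_e^{-1}\|[[w]]\|_{0,e}^2$. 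Substituting, both contributions collapse onto the common form $h_K^{d + \tilde{q}(1-d/2)}(\sum_{e \in \omega(K)} h_e^{-1}\|[[w]]\|_{0,e}^2)^{\tilde{q}/2}$. Summing over $K$, using $\sum_K a_K^{\tilde{q}/2} \leq (\sum_K a_K)^{\tilde{q}/2}$ for $\tilde{q} \geq 2$ and non-negative $a_K$ (together with the finite overlap of the patches), and reusing the jump-to-penalty conversion from the previous paragraph then produces $\|v\|_{0,\tilde{q},\mathcal{T}_h} \lesssim |\|(w,\mu)\||$.

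The main obstacle is precisely the exponent bookkeeping in the last step: one must verify that the $h_K$ powers generated by the scaled Sobolev embedding combine with those from the localized Oswald estimates without introducing hidden negative powers of $h$. The exponent $d + \tilde{q}(1 - d/2)$ is non-negative exactly when $2 \leq \tilde{q} < \infty$ for $d=2$ and $2 \leq \tilde{q} \leq 6$ for $d=3$, which is what pins down the admissible range of $\tilde{q}$ in the statement and matches the continuous Sobolev threshold provided by Lemma \ref{L3.3}, so the two constraints are mutually consistent.
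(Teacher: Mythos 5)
Your proof is correct, and its first half coincides with the paper's: the same splitting $w=I_hw+(w-I_hw)$, the same treatment of the conforming piece via Lemma \ref{L3.3}, Poincar\'e, \eqref{3.1.2}, and the conversion of the jump sum into $\|\tau^{1/2}(w-\mu)\|_{0,\partial\mathcal{T}_h}\lesssim|\|(w,\mu)\||$ using the single-valuedness of $\mu$ and Lemma \ref{L3.2}. Where you genuinely diverge is the remainder $w-I_hw$: the paper inserts the projection-mean operator $P_h:V_h\rightarrow W^{1,2}(\Omega)\cap W^{0,\tilde q}(\Omega)$ of \cite{Shi2013}, splits $w-I_hw=(w-P_hw)+(P_hw-I_hw)$, bounds the first piece by $h^{1-\frac{d}{2}+\frac{d}{\tilde q}}\|\nabla_hw\|_{0,\mathcal{T}_h}$ and the second through its $W^{1,2}$ norm and Lemma \ref{L3.3}, so that only the global estimates \eqref{3.1.1}--\eqref{3.1.2} are ever needed; you instead run a scaled Sobolev embedding element by element and pair the resulting $h_K$ powers with localized, patchwise versions of the Oswald bounds. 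Your exponent bookkeeping is right and yields the same admissibility condition as the paper's factor $h^{1-\frac{d}{2}+\frac{d}{\tilde q}}$ (multiply by $\tilde q$ to get your $d+\tilde q(1-\frac{d}{2})\geq 0$). What your route buys is the elimination of the auxiliary operator $P_h$ altogether; what it costs is that the local form of the Oswald estimates over patches $\omega(K)$ is not what Lemma \ref{L3.3.1} states, so you are implicitly strengthening the cited lemma (legitimately, since the construction in \cite{Karakashian2007} is local, but this should be said explicitly, and on a merely shape-regular, non-quasi-uniform mesh your argument really does need the local version to match $h_K$ powers with $h_e$ powers edge by edge). One small imprecision: for $d=2$ your exponent equals $2$ for every $\tilde q$, so the restriction $\tilde q<\infty$ is forced by the blow-up of the embedding constant in Lemma \ref{L3.3}, not by the sign of the exponent; only for $d=3$ does the exponent itself pin down $\tilde q\leq 6$.
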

\begin{proof}
	For all $ (w,\mu)\in V_{h}\times\widehat{V}_{h} $, from Lemma \ref{L3.3} and the Poinc\'{a}re inequality, we have
	\begin{equation*}
	\|I_{h}w\|_{0,\tilde{q},\mathcal{T}_{h}}\lesssim\|I_{h}w\|_{1,2,\mathcal{T}_{h}}\lesssim\|\nabla I_{h}w\|_{0,\mathcal{T}_{h}}.
	\end{equation*}
	From (\ref{3.1.2}) and Lemma \ref{L3.2} it follows
	\begin{equation}
	\begin{aligned}
	\|\nabla I_{h}w\|_{0,\mathcal{T}_{h}}&\lesssim \|\nabla_{h} w\|_{0,\mathcal{T}_{h}}+(\sum_{e\in\varepsilon_{h}}h_{e}^{-1}\|[[w]]\|^{2}_{0,e})^{\frac{1}{2}}\\
	&\lesssim |\|(w,\mu)\||+(\sum_{e\in\varepsilon_{h}}h_{e}^{-1}\|[[w-\mu]]\|^{2}_{0,e})^{\frac{1}{2}}\\
	&\lesssim |\|(w,\mu)\||.  \label{3.1.3}
	\end{aligned}
    \end{equation}
    Using Lemma \ref{L3.3}, the inverse inequality and the properties of
    the projection-mean operator (\cite{Shi2013}) $ P_{h}: V_{h}\rightarrow W^{1,2}(\Omega)\cap W^{0,\tilde{q}}(\Omega)$, we obtain
    \begin{equation*}
    \begin{aligned}
    \|w-I_{h}w\|_{0,\tilde{q},\mathcal{T}_{h}}&\lesssim \|w-P_{h}w\|_{0,\tilde{q},\mathcal{T}_{h}}+\|P_{h}w-I_{h}w\|_{0,\tilde{q},\mathcal{T}_{h}}\\
    &\lesssim h^{1-\frac{d}{2}+\frac{d}{\tilde{q}}}\|\nabla_{h}w\|_{0,\mathcal{T}_{h}}+\|P_{h}w-I_{h}w\|_{1,2,\mathcal{T}_{h}}\\
    &\lesssim \|\nabla_{h}w\|_{0,\mathcal{T}_{h}}+\|w-P_{h}w\|_{1,2,\mathcal{T}_{h}}+\|w-I_{h}w\|_{1,2,\mathcal{T}_{h}}\\
    &\lesssim \|\nabla_{h}w\|_{0,\mathcal{T}_{h}}+\|\nabla_{h}(w-I_{h}w)\|_{0,\mathcal{T}_{h}}\\
    &\lesssim |\|(w,\mu)\||,  \label{3.1.4}
    \end{aligned}
    \end{equation*}
which,    together with (\ref{3.1.3}), 
yields the desired estimate (\ref{3.2}).
\end{proof}

\subsection{Well-posedness of the semi-discrete HDG scheme}
First, we have the following boundedness results for $ \mathcal{B}_{h} $.
\begin{myLem} \label{L3.6}
	For any $ (v_{h},\widehat{v}_{h}), (u_{h},\widehat{u}_{h}), (w,\mu)\in V_{h}\times\widehat{V}_{h} $, it holds
\begin{equation}
|\mathcal{B}_{h}(v_{h};u_{h},\widehat{u}_h;w,\mu)|\lesssim |\|(v_{h},\widehat{v}_h)\||\cdotp|\|(u_{h},\widehat{u}_h)\||\cdotp|\|(w,\mu)\||, \label{3.3}
\end{equation}
\begin{equation}
|\mathcal{B}_{h}(v_{h};u_{h},\widehat{u}_h;w,\mu)|\lesssim \|v_{h}\|_{0,3,\mathcal{T}_{h}}\cdotp|\|(u_{h},\widehat{u}_h)\||\cdotp|\|(w,\mu)\||. \label{3.3.1}
\end{equation}
\end{myLem}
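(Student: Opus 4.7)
The plan is to decompose $\mathcal B_h$ into its four pieces (two volume integrals and two boundary integrals), rewrite the boundary part so that the "jumps" $w-\mu$ and $u_h-\widehat u_h$ appear, and then estimate each piece by Hölder's inequality together with Lemma \ref{L3.1} (discrete trace), Lemma \ref{L3.2} (control of $\|\nabla_h w\|_{0,\mathcal T_h}$ and $\|\tau^{1/2}(w-\mu)\|_{0,\partial\mathcal T_h}$), and Lemma \ref{L3.4} (embedding into $L^{\tilde q}$).

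For the volume terms I would apply Hölder with exponents $(3,6,2)$, giving
\begin{equation*}
|(\bm b(v_h)u_h,\nabla w)_{\mathcal T_h}|+|(\bm b(v_h)\!\cdot\!\nabla u_h, w)_{\mathcal T_h}|
\lesssim \|v_h\|_{0,3,\mathcal T_h}\bigl(\|u_h\|_{0,6,\mathcal T_h}\|\nabla_h w\|_{0,\mathcal T_h}+\|\nabla_h u_h\|_{0,\mathcal T_h}\|w\|_{0,6,\mathcal T_h}\bigr).
\end{equation*}
Lemma \ref{L3.2} bounds $\|\nabla_h\cdot\|_{0,\mathcal T_h}$ by $|\|(\cdot,\cdot)\||$, Lemma \ref{L3.4} bounds $\|\cdot\|_{0,6,\mathcal T_h}$ by $|\|(\cdot,\cdot)\||$ (valid for both $d=2$ and $d=3$), so this yields the desired form of \eqref{3.3.1}. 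For \eqref{3.3}, a further application of Lemma \ref{L3.4} to $\|v_h\|_{0,3,\mathcal T_h}$ (or equivalently using exponents $(4,4,2)$) converts the factor $\|v_h\|_{0,3,\mathcal T_h}$ into $|\|(v_h,\widehat v_h)\||$.

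For the two boundary contributions, the key algebraic rearrangement is
\begin{equation*}
-\langle\bm b(v_h)\!\cdot\!\bm n\,u_h,\mu\rangle_{\partial\mathcal T_h}+\langle\bm b(v_h)\!\cdot\!\bm n\,\widehat u_h,w\rangle_{\partial\mathcal T_h}
=\langle\bm b(v_h)\!\cdot\!\bm n\,u_h, w-\mu\rangle_{\partial\mathcal T_h}
-\langle\bm b(v_h)\!\cdot\!\bm n\,(u_h-\widehat u_h), w\rangle_{\partial\mathcal T_h}.
\end{equation*}
On each $\partial K$ I would estimate the first of these by Hölder with exponents $(3,6,2)$:
\begin{equation*}
|\langle\bm b(v_h)\!\cdot\!\bm n\,u_h, w-\mu\rangle_{\partial K}|
\lesssim \|v_h\|_{0,3,\partial K}\|u_h\|_{0,6,\partial K}\|w-\mu\|_{0,2,\partial K}.
\end{equation*}
By Lemma \ref{L3.1}, $\|v_h\|_{0,3,\partial K}\lesssim h_K^{-1/3}\|v_h\|_{0,3,K}$ and $\|u_h\|_{0,6,\partial K}\lesssim h_K^{-1/6}\|u_h\|_{0,6,K}$, while $\|w-\mu\|_{0,2,\partial K}=h_K^{1/2}\|\tau^{1/2}(w-\mu)\|_{0,2,\partial K}$ because $\tau=h_K^{-1}$. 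The powers $h_K^{-1/3-1/6+1/2}=h_K^0$ cancel exactly. A discrete Hölder over $K\in\mathcal T_h$ with the same exponents then produces
\begin{equation*}
\|v_h\|_{0,3,\mathcal T_h}\,\|u_h\|_{0,6,\mathcal T_h}\,\|\tau^{1/2}(w-\mu)\|_{0,\partial\mathcal T_h},
\end{equation*}
which by Lemmas \ref{L3.2} and \ref{L3.4} is bounded by $\|v_h\|_{0,3,\mathcal T_h}\,|\|(u_h,\widehat u_h)\||\,|\|(w,\mu)\||$. The second rearranged boundary term is treated identically after swapping the roles of $(u_h,\widehat u_h)$ and $(w,\mu)$, using $\|w\|_{0,6,\partial K}$ against $\|u_h-\widehat u_h\|_{0,2,\partial K}=h_K^{1/2}\|\tau^{1/2}(u_h-\widehat u_h)\|_{0,2,\partial K}$.

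The main technical point is the boundary estimate: one must arrange the exponents in Lemma \ref{L3.1} so that the accumulated negative powers of $h_K$ cancel against the positive $h_K^{1/2}$ produced by rewriting the jump with $\tau^{1/2}$, and one must first rearrange the two boundary terms so that genuine jumps $w-\mu$ and $u_h-\widehat u_h$ (controllable through the semi-norm via Lemma \ref{L3.2}) appear rather than the bare traces of $\mu$ or $\widehat u_h$. Everything else is a book-keeping exercise in Hölder's inequality. The bound \eqref{3.3} then follows from \eqref{3.3.1} by one additional application of Lemma \ref{L3.4} to upgrade $\|v_h\|_{0,3,\mathcal T_h}$ into $|\|(v_h,\widehat v_h)\||$.
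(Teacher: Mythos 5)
Your proposal is correct and follows essentially the same route as the paper: the same split into volume and boundary parts, the same rearrangement of the boundary terms into the jumps $u_h-\widehat u_h$ and $w-\mu$, the same $(3,6,2)$ H\"older exponents combined with Lemma \ref{L3.1} so that the powers of $h_K$ cancel against $\tau^{1/2}$, and the same final step of deducing \eqref{3.3} from \eqref{3.3.1} via Lemma \ref{L3.4}.
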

\begin{proof}
For any $ (v_{h},\widehat{v}_{h}), (u_{h},\widehat{u}_{h}), (w,\mu)\in V_{h}\times\widehat{V}_{h} $, by the definition of $ \mathcal{B}_{h} $, we have
\begin{equation}
\begin{aligned}
3\mathcal{B}_{h}(v_{h};u_{h},\widehat{u}_h;w,\mu)&=\left[(\bm{b}(v_{h})\cdotp\nabla u_{h},w)_{\mathcal{T}_{h}}-(\bm{b}(v_{h})u_{h},\nabla w)_{\mathcal{T}_{h}}\right] \\
&\ \ +\left[ \left\langle \bm{b}(v_{h})\cdotp\bm{n}\widehat{u}_{h},w\right\rangle_{\partial\mathcal{T}_{h}}-\left\langle \bm{b}(v_{h})\cdotp\bm{n}u_{h},\mu\right\rangle_{\partial\mathcal{T}_{h}}\right] \\
&=:R_{1}+R_{2}. \label{3.4}
\end{aligned}
\end{equation}
Using the H\"{o}lder inequality and Lemma \ref{L3.4}, we have
\begin{equation}
\begin{aligned}
|R_{1}|&\lesssim \sum_{K\in\mathcal{T}_{h}}\|v_{h}\|_{0,3,K}\|\nabla u_{h}\|_{0,2,K}\|w\|_{0,6,K}+\sum_{K\in\mathcal{T}_{h}}\|v_{h}\|_{0,3,K}\|u_{h}\|_{0,6,K}\|\nabla w\|_{0,2,K}\\
&\lesssim \|v_{h}\|_{0,3,\mathcal{T}_{h}}\cdotp|\|(u_{h},\widehat{u}_h)\||\cdotp|\|(w,\mu)\||, \label{3.5}
\end{aligned}
\end{equation}
From triangle inequality we have
\begin{equation}
\begin{aligned}
|R_{2}|&\leq|\left\langle \bm{b}(v_{h})\cdotp\bm{n}(u_{h}-\widehat{u}_{h}),w\right\rangle_{\partial\mathcal{T}_{h}}|+|\left\langle \bm{b}(v_{h})\cdotp\bm{n}u_{h},w-\mu\right\rangle_{\partial\mathcal{T}_{h}}|\\
&=:T_{1}+T_{2}. \label{3.6}
\end{aligned}
\end{equation}
By the H\"{o}lder inequality, Lemma \ref{L3.1} and Lemma \ref{L3.4}, we obtain
\begin{equation}
\begin{aligned}
|T_{1}|&\lesssim \sum_{K\in\mathcal{T}_{h}}\|v_{h}\|_{0,3,\partial K}\|u_{h}-\widehat{u}_{h}\|_{0,2,\partial K}\|w\|_{0,6,\partial K}\\
&\lesssim \sum_{K\in\mathcal{T}_{h}}h_{T}^{-\frac{1}{3}}\|v_{h}\|_{0,3,K}\|u_{h}-\widehat{u}_{h}\|_{0,2,\partial K}h_{T}^{-\frac{1}{6}}\|w\|_{0,6,K}\\
&=\sum_{K\in\mathcal{T}_{h}}\|v_{h}\|_{0,3,K}h_{T}^{-\frac{1}{2}}\|u_{h}-\widehat{u}_{h}\|_{0,2,\partial K}\|w\|_{0,6,K}\\
&\lesssim \|v_{h}\|_{0,3,\mathcal{T}_{h}}\cdotp|\|(u_{h},\widehat{u}_h)\||\cdotp|\|(w,\mu)\||. \label{3.7}
\end{aligned}
\end{equation}
Similarly, we have
\begin{equation}
\begin{aligned}
|T_{2}|&\lesssim \sum_{K\in\mathcal{T}_{h}}\|v_{h}\|_{0,3,\partial K}\|u_{h}\|_{0,6,\partial K}\|w-\mu\|_{0,2,\partial K}\\
&\lesssim \sum_{K\in\mathcal{T}_{h}}\|v_{h}\|_{0,3,K}\|u_{h}\|_{0,6,K}h_{T}^{-\frac{1}{2}}\|w-\mu\|_{0,2,\partial K}\\
&\lesssim\|v_{h}\|_{0,3,\mathcal{T}_{h}}\cdotp|\|(u_{h},\widehat{u}_h)\||\cdotp|\|(w,\mu)\||. \label{3.8}
\end{aligned}
\end{equation}
As a result, the desired inequality (\ref{3.3.1}) follows from (\ref{3.4})-(\ref{3.8}). 

Since $$ \|v_{h}\|_{0,3,\mathcal{T}_{h}}\lesssim |\|(v_{h},\widehat{v}_h)\|| $$ by Lemma \ref{L3.4},   the inequality (\ref{3.3.1}) indicates  (\ref{3.3}).
\end{proof}

We also have the following stability result.
\begin{myTheo}
	For the numerical solution $ (\bm{q}_{h},u_{h},\widehat{u}_{h})\in \bm{Q}_{h}\times V_{h}\times\widehat{V}_{h}  $ to the scheme (\ref{2.12}) with initial setting $ u_{h}(0) $, it holds 
	\begin{equation}
	\|u_{h}(t)\|_{0,\mathcal{T}_{h}}^{2}+\nu\int_{0}^{t}|\|(u_{h},\widehat{u}_{h})\||^{2}d\tau\lesssim \|u_{h}(0)\|_{0,\mathcal{T}_{h}}^{2}+\frac{1}{\nu}\int_{0}^{t}\|f(\tau)\|_{0,\mathcal{T}_{h}}^{2}d\tau, \label{3.9}
	\end{equation}
	i.e., the numerical solution is stable with respect to initial approximate value and source term.
\end{myTheo}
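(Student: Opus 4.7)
The plan is to apply the standard energy argument: test the second equation of \eqref{2.12} with $(w,\mu)=(u_h,\widehat u_h)$, identify a discrete analogue of the skew-symmetry of $\mathcal B$ to kill the convective contribution, then close the estimate with Young's inequality and a discrete Poincar\'e bound.

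First I would substitute $(w,\mu)=(u_h,\widehat u_h)$ into the evolution equation. The time-derivative term collapses to $\tfrac12 \tfrac{d}{dt}\|u_h\|_{0,\mathcal{T}_h}^2$, the term $\nu(\mathcal K_h(u_h,\widehat u_h),\mathcal K_h(u_h,\widehat u_h))_{\mathcal T_h}$ together with the stabilization $\mathcal S_h(u_h,\widehat u_h;u_h,\widehat u_h)$ gives exactly $\nu |\|(u_h,\widehat u_h)\||^2$ by the definition of the triple-bar seminorm.

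The key algebraic step is to check the identity $\mathcal B_h(u_h;u_h,\widehat u_h;u_h,\widehat u_h)=0$. I would verify this directly from the definition: the two volume terms $-\tfrac13(\bm b(u_h)u_h,\nabla u_h)_{\mathcal T_h}+\tfrac13(\bm b(u_h)\cdot\nabla u_h,u_h)_{\mathcal T_h}$ cancel by commutativity of the pointwise product, and the two face terms $-\tfrac13\langle\bm b(u_h)\cdot\bm n\,u_h,\widehat u_h\rangle_{\partial\mathcal T_h}+\tfrac13\langle\bm b(u_h)\cdot\bm n\,\widehat u_h,u_h\rangle_{\partial\mathcal T_h}$ cancel by symmetry of the face pairing. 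This is the discrete counterpart of the skew-symmetry $\mathcal B(u,v,v)=0$ that motivated the definition of $\mathcal B_h$, and is really where the trilinear form was engineered to give exactly this conservation property. With this identity in hand, the tested equation reduces to
\begin{equation*}
\tfrac12 \tfrac{d}{dt}\|u_h\|_{0,\mathcal T_h}^2+\nu |\|(u_h,\widehat u_h)\||^2=(f,u_h)_{\mathcal T_h}.
\end{equation*}

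To close the bound, I would apply Cauchy--Schwarz to the right-hand side and use the discrete Poincar\'e inequality $\|u_h\|_{0,\mathcal T_h}\lesssim |\|(u_h,\widehat u_h)\||$, which is exactly Lemma~\ref{L3.4} with $\tilde q=2$. A weighted Young inequality with parameter $\nu$ then gives $(f,u_h)_{\mathcal T_h}\le \tfrac{\nu}{2}|\|(u_h,\widehat u_h)\||^2+\tfrac{C}{\nu}\|f\|_{0,\mathcal T_h}^2$, and the $|\|\cdot\||^2$ contribution is absorbed into the left-hand side. Integrating in time from $0$ to $t$ yields \eqref{3.9}. The only subtle step is the vanishing of the trilinear form; after that, the remainder is routine. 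No Gronwall is needed because the convective term was eliminated exactly.
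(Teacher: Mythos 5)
Your proposal is correct and follows essentially the same route as the paper: test with $(w,\mu)=(u_h,\widehat u_h)$, use the vanishing of $\mathcal B_h(u_h;u_h,\widehat u_h;u_h,\widehat u_h)$ (the antisymmetry the paper invokes implicitly), bound $(f,u_h)_{\mathcal T_h}$ via Lemma~\ref{L3.4} and Young's inequality, and integrate in time. The only difference is that you spell out the cancellation in the trilinear form explicitly, which the paper leaves tacit.
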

\begin{proof}
	Taking $ (w,\mu)=(u_{h},\widehat{u}_{h}) $ in (\ref{2.12}), we get
	\begin{equation*}
	((u_{h})_{t},u_{h})_{\mathcal{T}_{h}}+\nu|\|(u_{h},\widehat{u}_{h})\||^{2}=(f,u_{h})_{\mathcal{T}_{h}},
	\end{equation*}
	then, using the Young’s inequality and Lemma \ref{L3.4}, we know that
	\begin{equation*}
	(f,u_{h})_{\mathcal{T}_{h}}\leq \frac{C}{2\nu}\|f\|^{2}_{0,\mathcal{T}_{h}}+\frac{\nu}{2}|\|(u_{h},\widehat{u}_{h})\||^{2},
	\end{equation*}
	where $C$ is a positive constant independent of $h$. Thus
	\begin{equation*}
	\dfrac{1}{2}\dfrac{d}{dt}\|u_{h}\|_{0,\mathcal{T}_{h}}^{2}+\frac{\nu}{2}|\|(u_{h},\widehat{u}_{h})\||^{2}\leq \frac{C}{\nu}\|f\|^{2}_{0,\mathcal{T}_{h}},
	\end{equation*}
	Integrating the above inequality with respect to $ t $ yields the desired result (\ref{3.9}).
\end{proof}


We are now in a position to show   the global existence and uniqueness of the
semi-discrete solution   by the standard  theory of
 ordinary differential equations. 

\begin{myTheo}
	If $ f(\cdotp,t) $ is continuous with respect to $ t $, then the problem (\ref{2.12}) admits a unique solution $ (\bm{q}_{h},u_{h}(t),\widehat{u}_{h}(t)) $ for any $ t\in [0,T] $.
\end{myTheo}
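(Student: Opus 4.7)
My plan is to reduce the differential-algebraic system (\ref{2.12}) to a first-order ODE in the coefficients of $u_h$ alone, apply the Picard-Lindel\"of theorem for local existence and uniqueness, and then use the stability bound (\ref{3.9}) to exclude finite-time blow-up.

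I would begin by testing (\ref{2.12}) with $(w,0)$ and $(0,\mu)$ separately. The $(w,0)$-family produces a genuine ODE because $(u_{h,t},w)_{\mathcal{T}_h}$ generates a symmetric positive-definite mass matrix $M$ on $V_h$; the $(0,\mu)$-family is purely algebraic. The crucial observation is that, when $w=0$, the nonlinear form reduces to
\begin{equation*}
\mathcal{B}_{h}(u_{h};u_{h},\widehat{u}_{h};0,\mu) = -\tfrac{1}{3}\langle \bm{b}(u_{h})\cdot\bm{n}\,u_{h},\mu\rangle_{\partial\mathcal{T}_{h}},
\end{equation*}
which does not depend on $\widehat{u}_h$ at all (the $\widehat{u}_h$-carrying term in the definition of $\mathcal{B}_h$ comes paired with a factor of $w$). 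Combined with the bilinearity of $\mathcal{K}_h$ and $\mathcal{S}_h$ in their arguments, this makes the algebraic constraint \emph{linear} in $\widehat{u}_h$.

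Next I would solve this linear constraint. Its coefficient bilinear form on $\widehat{V}_h\times\widehat{V}_h$ is
\begin{equation*}
L(\widehat{u}_h,\mu) := \nu(\mathcal{K}_h(0,\widehat{u}_h),\mathcal{K}_h(0,\mu))_{\mathcal{T}_h} + \nu\langle \tau\widehat{u}_h,\mu\rangle_{\partial\mathcal{T}_h},
\end{equation*}
for which $L(\widehat{u}_h,\widehat{u}_h)=\nu|\|(0,\widehat{u}_h)\||^{2}$. Because $|\|(\cdot,\cdot)\||$ has already been shown to be a norm on $V_h\times\widehat{V}_h$, $L$ is positive definite on the finite-dimensional space $\widehat{V}_h$ and hence invertible; its right-hand side is polynomial (quadratic) in $u_h$, so the resulting map $\Phi:V_h\to\widehat{V}_h$ with $\widehat{u}_h=\Phi(u_h)$ is $C^{\infty}$. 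Substituting $\widehat{u}_h=\Phi(u_h)$ into the $(w,0)$-equation collapses (\ref{2.12}) to a first-order ODE of the form $M U'(t)+F(U(t))=f^{*}(t)$, where $U(t)$ is the coefficient vector of $u_h$, $F$ is polynomial (hence locally Lipschitz), and $f^{*}$ is continuous in $t$ by hypothesis. Picard-Lindel\"of yields a unique $C^{1}$ solution on a maximal interval $[0,T^{*})\subseteq[0,T]$; the stability bound (\ref{3.9}) keeps $\|u_h(t)\|_{0,\mathcal{T}_h}$ uniformly bounded, which by finite-dimensional norm equivalence controls $|U(t)|$ and rules out blow-up, forcing $T^{*}=T$. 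Finally $\bm{q}_h$ is reconstructed element-wise from $\bm{q}_h=-\mathcal{K}_h(u_h,\widehat{u}_h)$.

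The principal obstacle is recognizing that the constraint equation is actually linear in $\widehat{u}_h$. This is easy to miss because $\mathcal{B}_h$ is trilinear and nominally involves $\widehat{u}_h$; only by inspecting its four individual terms does one see that the sole $\widehat{u}_h$-dependent term carries a factor of $w$ and therefore drops out when we test with $(0,\mu)$. Once this observation is in place, the remaining steps are routine ODE machinery.
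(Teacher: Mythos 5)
Your proposal is correct and follows essentially the same strategy as the paper's proof: both eliminate $\bm{q}_{h}$ and $\widehat{u}_{h}$ through the algebraic constraints (which are linear in $\widehat{u}_{h}$ because the only $\widehat{u}_{h}$-carrying term of $\mathcal{B}_{h}$ is paired with $w$) so as to reduce (\ref{2.12}) to an ODE for the coefficients of $u_{h}$ alone, and both combine the stability bound (\ref{3.9}) with standard ODE theory to get global existence and uniqueness on $[0,T]$. The difference is only presentational: the paper carries out the elimination at the matrix level (system (\ref{3.11}) reduced to (\ref{3.12})) and asserts global Lipschitz continuity from boundedness, whereas you work variationally and argue via local existence plus continuation on the maximal interval, which is, if anything, the more careful rendering of the same idea.
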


\begin{proof}
	
	
	Let $ \bm{\varphi}, \bm{\phi} $ and $ \bm{\psi} $ be the bases of $ V_{h}|_{K} $, $ \widehat{V}_{h}|_{K} $ and $ [P_{k-1}(K)]^{d} $, respectively, with
	\begin{equation*}
	\bm{\varphi}=(\varphi_{1}, \ldots, \varphi_{m}),\ \ \bm{\phi}=(\phi_{1}, \ldots, \phi_{n}),\ \ \bm{\psi}=(\psi_{1}, \ldots, \psi_{p}).
	\end{equation*}
	Denote $ u_{h}(t)|_{K}:=\bm{\varphi}\bm{U}(t) $, $ \widehat{u}_{h}(t)|_{\partial K}:=\bm{\phi}\widehat{\bm{U}}(t) $ and $ -\bm{q}_{h}(t)|_{K}:=\bm{\psi}\bm{Q}(t) $
	with
	\begin{equation*}
	\bm{U}(t)=(U_{1}(t), \ldots, U_{m}(t))^{T},\ \ \widehat{\bm{U}}(t)=(\widehat{U}_{1}(t), \ldots, \widehat{U}_{n}(t))^{T},\ \ \bm{Q}(t)=(Q_{1}(t), \ldots, Q_{p}(t))^{T}.
	\end{equation*}
	Set
	
	\begin{align*}
	\mathcal{M}_{1}&=\sum_{K\in\mathcal{T}_{h}}\int_{K}\bm{\psi}^{T}\bm{\psi}dx, &\ \mathcal{M}_{2}&=\sum_{K\in\mathcal{T}_{h}}\int_{K}
	(\nabla\cdotp\bm{\psi})^{T}\bm{\varphi}dx, &\ \mathcal{M}_{3}&=\sum_{K\in\mathcal{T}_{h}}\int_{\partial K}(\bm{\psi}\bm{n})^{T}\bm{\phi}ds,\\
	\mathcal{M}_{4}&=\sum_{K\in\mathcal{T}_{h}}\int_{K}\bm{\varphi}^{T}\bm{\varphi}dx, &\ \mathcal{M}_{5}&=\sum_{K\in\mathcal{T}_{h}}\int_{\partial K}\tau\bm{\varphi}^{T}\bm{\varphi}ds, &\ \mathcal{M}_{6}&=\sum_{K\in\mathcal{T}_{h}}\int_{\partial K}\tau(\bm{\varphi})^{T}\bm{\phi}ds,\\
	\mathcal{M}_{7}&=\sum_{K\in\mathcal{T}_{h}}\int_{\partial K}\tau(\bm{\phi})^{T}\bm{\phi}ds, &\ \mathcal{M}_{8}(\tilde{\bm{U}})&=\sum_{K\in\mathcal{T}_{h}}\int_{K}\frac{1}{3}(\nabla\bm{\varphi})^{T}\bm{b}(\bm{\varphi}\tilde{\bm{U}})\bm{\varphi}dx, &\
	\mathcal{M}_{9}(\tilde{\bm{U}})&=\sum_{K\in\mathcal{T}_{h}}\int_{\partial K}\frac{1}{3}(\bm{b}(\bm{\varphi}\tilde{\bm{U}})\bm{n})\bm{\varphi}^{T}\bm{\phi}ds,\\
    \mathcal{F}(t)&=\sum_{K\in\mathcal{T}_{h}}\int_{K}\bm{\varphi}^{T}f(t)dx.
	\end{align*}
	Here we denote that $ \tilde{\bm{Q}}(t)|_{K}=\bm{Q}(t) $, $ \tilde{\bm{U}}(t)|_{K}=\bm{U}(t) $ and $ \tilde{\widehat{\bm{U}}}(t)|_{\partial K}=\widehat{\bm{U}}(t) $. Thus, the system (\ref{2.12}) can be written as
	\begin{equation}
	\left\{
	\begin{aligned}
	&\mathcal{M}_{1}\tilde{\bm{Q}}(t)+\mathcal{M}_{2}\tilde{\bm{U}}(t)-\mathcal{M}_{3}\tilde{\widehat{\bm{U}}}(t)=0,\\
	&\mathcal{M}_{4}\frac{d\tilde{\bm{U}}(t)}{dt}-\nu\mathcal{M}_{2}^{T}\tilde{\bm{Q}}(t)+(\nu\mathcal{M}_{5}-\mathcal{M}_{8}(\tilde{\bm{U}}(t))+\mathcal{M}_{8}^{T}(\tilde{\bm{U}}(t)))\tilde{\bm{U}}(t)+(\mathcal{M}_{9}(\tilde{\bm{U}}(t))-\nu\mathcal{M}_{6})\tilde{\widehat{\bm{U}}}(t)=\mathcal{F}(t),\\
	&\nu\mathcal{M}_{3}^{T}\tilde{\bm{Q}}(t)-(\nu\mathcal{M}_{6}^{T}+\mathcal{M}_{9}^{T}(\tilde{\bm{U}}(t)))\tilde{\bm{U}}(t)+\nu\mathcal{M}_{7}\tilde{\widehat{\bm{U}}}(t)=0. \label{3.11}
	\end{aligned}
	\right.
	\end{equation}
	Since $ \mathcal{M}_{1}, \mathcal{M}_{4} $ and $ \mathcal{M}_{7} $ are symmetric positive defined, we can eliminate $ \tilde{\bm{Q}}(t) $ and $ \tilde{\widehat{\bm{U}}}(t) $ in (\ref{3.11}) to get
	\begin{equation}
	\mathcal{M}_{4}\frac{d\tilde{\bm{U}}(t)}{dt}+\tilde{\mathcal{M}}(\tilde{\bm{U}}(t))\tilde{\bm{U}}(t)=\mathcal{F}(t), \label{3.12}
	\end{equation}
	where
	\begin{align*}
	\tilde{\mathcal{M}}(\tilde{\bm{U}}(t))=&\nu\mathcal{M}_{2}^{T}\mathcal{M}_{1}^{-1}\mathcal{M}_{2}+\nu\mathcal{M}_{5}-\mathcal{M}_{8}(\tilde{\bm{U}})+\mathcal{M}_{8}^{T}(\tilde{\bm{U}})
	-(\nu\mathcal{M}_{2}^{T}\mathcal{M}_{1}^{-1}\mathcal{M}_{3}-\mathcal{M}_{9}(\tilde{\bm{U}})+\nu\mathcal{M}_{6})\mathcal{M}^{*}
	\end{align*}
	and
	\begin{equation*}
	\mathcal{M}^{*}=(\mathcal{M}_{7}+\mathcal{M}_{3}^{T}\mathcal{M}_{1}^{-1}\mathcal{M}_{3})^{-1}(\mathcal{M}_{3}^{T}\mathcal{M}_{1}^{-1}\mathcal{M}_{2}+\mathcal{M}^{T}_{6}+\frac{1}{\nu}\mathcal{M}_{9}^{T}(\tilde{\bm{U}})).
	\end{equation*}
	According to the stability result (\ref{3.9}), we know that $ \mathcal{M}_{8}(\tilde{\bm{U}}(t)) $ and $ \mathcal{M}_{9}(\tilde{\bm{U}}(t)) $ are bounded in $ [0,T] $, which implies that $ \tilde{\mathcal{M}}(\tilde{\bm{U}}(t))\tilde{\bm{U}}(t) $ is globally Lipschitz continuous with respect to $ \tilde{\bm{U}}(t) $. In addition, $ \mathcal{M}_{4} $ is symmetric positive defined, by the standard ODE theory \cite{Warga1972}, there exist a unique solution to (\ref{3.12}) on $ [0,T] $, which
	means the existence and uniqueness of $ (\bm{q}_{h}(t),u_{h}(t),\widehat{u}_{h}(t)) $ in (\ref{2.12}) on the interval $ [0,T] $. 
\end{proof}


\subsection{A priori error estimation}
This section is devoted to the error estimation of the HDG scheme (\ref{2.12}). 
For the $ L^{2} $ projections
$
(\boldsymbol{\Pi}_{k-1}^{o}\bm{q}(t),\Pi_{k}^{o}u(t),\Pi_{l}^{\partial}u(t)),
$
we have the following standard estimates \cite{Brenner1994}:
\begin{equation*}
\begin{aligned}
\|\bm{q}-\boldsymbol{\Pi}_{k-1}^{o}\bm{q}\|_{0,\mathcal{T}_{h}}&\leq Ch^{k}\|\bm{q}\|_{k,\Omega},\ \forall \bm{q}\in [H^{k}(\Omega)]^{d},\\
\|u-\Pi_{k}^{o}u\|_{0,\mathcal{T}_{h}}&\leq Ch^{k+1}\|u\|_{k+1,\Omega},\ \forall u\in H^{k+1}(\Omega),\\
\|u-\Pi_{k}^{o}u\|_{0,\partial\mathcal{T}_{h}}&\leq Ch^{k+\frac{1}{2}}\|u\|_{k+1,\Omega},\ \forall u\in H^{k+1}(\Omega),\\
\|w\|_{0,\partial\mathcal{T}_{h}}&\leq Ch^{-\frac{1}{2}}\|w\|_{0,\mathcal{T}_{h}},\ \forall w\in V_{h}.
\end{aligned}
\end{equation*}

We also need the Gronwall inequality:
\begin{myLem} \label{L3.5}
	Suppose $ \rho(t)\geq 0 $ satisfies 
	\begin{equation*}
	\rho(t)\leq \alpha+\int_{0}^{t}\beta(s)\rho(s)ds,
	\end{equation*}
	with $ \alpha, \beta(s)\geq 0 $. Then it follows
	\begin{equation*}
	\rho(t)\leq \alpha\exp(\int_{0}^{t}\beta(s)ds).
	\end{equation*}
\end{myLem}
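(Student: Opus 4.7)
The plan is to reduce the integral inequality to a differential inequality and then solve it with an integrating factor, which is the standard route to Gronwall's lemma. First I would introduce the auxiliary function
\begin{equation*}
G(t) := \alpha + \int_{0}^{t}\beta(s)\rho(s)\,ds,
\end{equation*}
so that by hypothesis $\rho(t)\leq G(t)$ and $G(0)=\alpha$. Since $\beta(s)\geq 0$ and $\rho(s)\geq 0$, $G$ is non-decreasing and, for almost every $t$, differentiable with $G'(t)=\beta(t)\rho(t)$. The key observation is then $G'(t)\leq \beta(t)G(t)$, obtained by substituting the bound $\rho(t)\leq G(t)$ into the derivative.

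Next I would introduce the integrating factor $E(t):=\exp\!\bigl(-\int_{0}^{t}\beta(s)\,ds\bigr)$. Multiplying the inequality $G'(t)-\beta(t)G(t)\leq 0$ by $E(t)>0$ yields
\begin{equation*}
\frac{d}{dt}\bigl(E(t)G(t)\bigr) = E(t)\bigl(G'(t)-\beta(t)G(t)\bigr) \leq 0,
\end{equation*}
so $E(t)G(t)$ is non-increasing. Integrating from $0$ to $t$ and using $E(0)=1$, $G(0)=\alpha$ gives $E(t)G(t)\leq \alpha$, i.e.\ $G(t)\leq \alpha\exp\!\bigl(\int_{0}^{t}\beta(s)\,ds\bigr)$. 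Combining with $\rho(t)\leq G(t)$ delivers the claim.

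The main technical point, and the only place a reader might object, is the differentiability of $G$: under the stated hypotheses $\beta$ and $\rho$ are only assumed non-negative, so the derivative $G'=\beta\rho$ exists only almost everywhere and the chain-rule manipulation must be justified in the sense of absolutely continuous functions. I would handle this by noting that $G$ is absolutely continuous (as an indefinite integral of a locally integrable function), so the product $E\cdot G$ is also absolutely continuous and the fundamental theorem of calculus applies to $(E G)'$ a.e., which is sufficient to integrate the inequality. If $\beta$ is merely measurable without any integrability, a standard regularization (replacing $\beta$ by $\beta\wedge n$ and passing to the limit by monotone convergence) provides a completely elementary alternative; I would mention this as a fallback rather than carry it out in detail, since for the later applications of this lemma $\beta$ will be a bounded function of $t$.
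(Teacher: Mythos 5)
Your proof is correct: the paper states this Gronwall inequality as a standard auxiliary result and offers no proof of its own, so there is nothing to compare against, but your integrating-factor argument (bounding $\rho$ by the auxiliary function $G$, deriving $G'\leq \beta G$, and showing $e^{-\int_0^t\beta}\,G$ is non-increasing) is the canonical derivation and is sound. The remark on absolute continuity of $G$ and the truncation fallback for merely measurable $\beta$ is a welcome extra degree of care that the paper does not bother with, since in its applications $\beta$ is indeed bounded.
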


\begin{myLem} \label{L4.1}
For any $ u\in H_{0}^{1}(\Omega) $, $ (w,\mu)\in V_{h}\times\widehat{V}_{h} $, it holds
\begin{equation}
\mathcal{B}_{h}(\Pi_{k}^{o}u(t);\Pi_{k}^{o}u(t),\Pi_{l}^{\partial}u(t);w,\mu)=(\bm{b}(u)\cdot\nabla u,w)+E_{N}(u;u,w,\mu), \label{4.1}
\end{equation}
where
\begin{equation*}
\begin{aligned}
E_{N}(u;u,w,\mu):=&\frac{1}{3}(\bm{b}(u-\Pi_{k}^{o}u)u,\nabla w)_{\mathcal{T}_{h}}+\frac{1}{3}(\bm{b}(\Pi_{k}^{o}u)(u-\Pi_{k}^{o}u),\nabla w)_{\mathcal{T}_{h}}\\
&-\frac{1}{3}(\bm{b}(u-\Pi_{k}^{o}u)\cdotp\nabla\Pi_{k}^{o}u,w)_{\mathcal{T}_{h}}-\frac{1}{3}(\bm{b}(u)\cdotp(\nabla u-\nabla\Pi_{k}^{o}u),w)_{\mathcal{T}_{h}}\\
&+\frac{1}{3}\left\langle \bm{b}(u-\Pi_{k}^{o}u)\cdotp\bm{n},\mu\Pi_{k}^{o}u\right\rangle_{\partial\mathcal{T}_{h}}+\frac{1}{3}\left\langle \bm{b}(u)\cdotp\bm{n},\mu(u-\Pi_{k}^{o}u)\right\rangle_{\partial\mathcal{T}_{h}}\\
&-\frac{1}{3}\left\langle \bm{b}(u-\Pi_{k}^{o}u)\cdotp\bm{n},w\Pi_{l}^{\partial}u\right\rangle_{\partial\mathcal{T}_{h}}-\frac{1}{3}\left\langle \bm{b}(u)\cdotp\bm{n},w(u-\Pi_{l}^{\partial}u)\right\rangle_{\partial\mathcal{T}_{h}}.
\end{aligned}
\end{equation*}
\end{myLem}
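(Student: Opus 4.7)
The plan is to evaluate $\mathcal{B}_h(\Pi_k^o u;\Pi_k^o u,\Pi_l^\partial u;w,\mu)$ by a term-by-term add-and-subtract: write each occurrence of $\Pi_k^o u$ (resp.\ $\Pi_l^\partial u$) as $u-(u-\Pi_k^o u)$ (resp.\ $u-(u-\Pi_l^\partial u)$). Since $\bm{b}(v)$ is linear in $v$, each quadratic combination appearing in $\mathcal{B}_h$ admits a splitting of the form
\begin{equation*}
\bm{b}(u)u-\bm{b}(\Pi_k^o u)\Pi_k^o u=\bm{b}(u-\Pi_k^o u)u+\bm{b}(\Pi_k^o u)(u-\Pi_k^o u)
\end{equation*}
for the three pieces that are quadratic in $\Pi_k^o u$, and, for the face term carrying the numerical trace,
\begin{equation*}
\bm{b}(u)u-\bm{b}(\Pi_k^o u)\Pi_l^\partial u=\bm{b}(u)(u-\Pi_l^\partial u)+\bm{b}(u-\Pi_k^o u)\Pi_l^\partial u.
\end{equation*}
These splittings are chosen so that the resulting eight error pieces line up one-to-one with the eight summands of $E_N(u;u,w,\mu)$; what remains is a ``consistency'' expression $\mathcal{B}_h^*(u,w,\mu)$ obtained by formally replacing every $\Pi_k^o u$ and $\Pi_l^\partial u$ in the definition of $\mathcal{B}_h$ by $u$.

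The next step is to verify that $\mathcal{B}_h^*(u,w,\mu)=(\bm{b}(u)\cdot\nabla u,w)$. I would apply element-wise integration by parts to the first volume summand,
\begin{equation*}
-(\bm{b}(u)u,\nabla w)_{\mathcal{T}_h}=(\nabla\cdot(\bm{b}(u)u),w)_{\mathcal{T}_h}-\langle\bm{b}(u)\cdot\bm{n}\,u,w\rangle_{\partial\mathcal{T}_h},
\end{equation*}
so that the new face contribution exactly cancels $\tfrac{1}{3}\langle\bm{b}(u)\cdot\bm{n}\,u,w\rangle_{\partial\mathcal{T}_h}$. The remaining face summand $-\tfrac{1}{3}\langle\bm{b}(u)\cdot\bm{n}\,u,\mu\rangle_{\partial\mathcal{T}_h}$ drops because $u\in H_0^1(\Omega)$ makes $\bm{b}(u)u$ single-valued across every interior face while the unit normal flips sign between adjacent elements, and $\mu$ is single-valued on interior faces and zero on $\partial\Omega$. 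Hence $\mathcal{B}_h^*(u,w,\mu)=\tfrac{1}{3}(\nabla\cdot(\bm{b}(u)u),w)_{\mathcal{T}_h}+\tfrac{1}{3}(\bm{b}(u)\cdot\nabla u,w)_{\mathcal{T}_h}$. Since $\bm{b}(u)u$ equals $u^2$ in every coordinate, a direct computation gives $\nabla\cdot(\bm{b}(u)u)=2\,\bm{b}(u)\cdot\nabla u$, and the consistency part collapses to $(\bm{b}(u)\cdot\nabla u,w)$.

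Combining this consistency identity with the separated error pieces yields the stated formula. The only real difficulty is bookkeeping: one must verify that each of the eight terms produced by the add-and-subtract matches the correct summand of $E_N$ with the right sign, being particularly careful for the face term where the two distinct projections $\Pi_k^o u$ and $\Pi_l^\partial u$ appear together and force the use of the second splitting identity. No analytical tool beyond elementwise integration by parts and the linearity of $\bm{b}$ is required.
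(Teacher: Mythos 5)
Your argument is correct and matches the paper's proof in substance: both rest on add-and-subtract splittings afforded by the linearity of $\bm{b}$, elementwise integration by parts, and the vanishing of $\langle \bm{b}(u)\cdotp\bm{n},u\mu\rangle_{\partial\mathcal{T}_{h}}$ for $u\in H_{0}^{1}(\Omega)$; the paper merely performs the consistency computation first (rewriting $(\bm{b}(u)\cdot\nabla u,w)$ into the $\mathcal{B}_{h}$-shaped form via the $\frac{1}{3}$--$\frac{2}{3}$ split) and the projection splitting second. The only caveat is that the third through sixth summands of $E_{N}$ actually arise from the ordering $\bm{b}(u)v-\bm{b}(\Pi_{k}^{o}u)v'=\bm{b}(u-\Pi_{k}^{o}u)v'+\bm{b}(u)(v-v')$ rather than the first identity you display, but since both orderings sum to the same total difference this is pure bookkeeping and does not affect the conclusion.
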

\begin{proof}
For any $ w\in V_{h} $, by $ \left\langle \bm{b}(u)\cdotp\bm{n},u\mu \right\rangle_{\partial\mathcal{T}_{h}}=0  $ we have
\begin{equation*}
\begin{aligned}
(\bm{b}(u)\cdot\nabla u,w)_{\mathcal{T}_{h}}&=\frac{1}{3}(\nabla\cdotp\bm{b}(u),uw)_{\mathcal{T}_{h}}+\frac{2}{3}(\bm{b}(u),w\nabla u)_{\mathcal{T}_{h}}\\
&=\frac{1}{3}\left\langle \bm{b}(u)\cdotp\bm{n},uw\right\rangle_{\partial\mathcal{T}_{h}}-\frac{1}{3}(\bm{b}(u),w\nabla u)_{\mathcal{T}_{h}}
 -\frac{1}{3}(\bm{b}(u),u\nabla w)_{\mathcal{T}_{h}}+\frac{2}{3}(\bm{b}(u),w\nabla u)_{\mathcal{T}_{h}}\\
&=\frac{1}{3}(\bm{b}(u),w\nabla u)_{\mathcal{T}_{h}}-\frac{1}{3}(\bm{b}(u)u,\nabla w)_{\mathcal{T}_{h}}
 +\frac{1}{3}\left\langle \bm{b}(u)\cdotp\bm{n},u(w-\mu) \right\rangle_{\partial\mathcal{T}_{h}}
\end{aligned}
\end{equation*}
and
\begin{equation*}
\begin{aligned}
&\mathcal{B}_{h}(\Pi_{k}^{o}u(t);\Pi_{k}^{o}u(t),\Pi_{l}^{\partial}u(t);w,\mu)\\
=&-\frac{1}{3}(\bm{b}(\Pi_{k}^{o}u)\Pi_{k}^{o}u,\nabla w)_{\mathcal{T}_{h}}+\frac{1}{3}(\bm{b}(\Pi_{k}^{o}u)\cdotp\nabla\Pi_{k}^{o}u,w)_{\mathcal{T}_{h}}
-\frac{1}{3}\left\langle \bm{b}(\Pi_{k}^{o}u)\cdotp\bm{n},\mu\Pi_{k}^{o}u\right\rangle_{\partial\mathcal{T}_{h}}+ \frac{1}{3}\left\langle \bm{b}(\Pi_{k}^{o}u)\cdotp\bm{n},w\Pi_{l}^{\partial}u\right\rangle_{\partial\mathcal{T}_{h}}, 
\end{aligned}
\end{equation*}
which imply
\begin{equation*}
\begin{aligned}
&-\frac{1}{3}(\bm{b}(\Pi_{k}^{o}u)\Pi_{k}^{o}u,\nabla w)_{\mathcal{T}_{h}}+\frac{1}{3}(\bm{b}(\Pi_{k}^{o}u)\cdotp\nabla\Pi_{k}^{o}u,w)_{\mathcal{T}_{h}}\\
=&\frac{1}{3}(\bm{b}(u-\Pi_{k}^{o}u)u,\nabla w)_{\mathcal{T}_{h}}+\frac{1}{3}(\bm{b}(\Pi_{k}^{o}u)(u-\Pi_{k}^{o}u),\nabla w)_{\mathcal{T}_{h}}-\frac{1}{3}(\bm{b}(u)u,\nabla w)_{\mathcal{T}_{h}}\\
&-\frac{1}{3}(\bm{b}(u-\Pi_{k}^{o}u)\cdotp\nabla\Pi_{k}^{o}u,w)_{\mathcal{T}_{h}}-\frac{1}{3}(\bm{b}(u)\cdotp(\nabla u-\nabla\Pi_{k}^{o}u),w)_{\mathcal{T}_{h}}+\frac{1}{3}(\bm{b}(u),w\nabla u)_{\mathcal{T}_{h}}, 
\end{aligned}
\end{equation*}
and
\begin{equation*}
\begin{aligned}
&-\frac{1}{3}\left\langle \bm{b}(\Pi_{k}^{o}u)\cdotp\bm{n},\mu\Pi_{k}^{o}u\right\rangle_{\partial\mathcal{T}_{h}}+ \frac{1}{3}\left\langle \bm{b}(\Pi_{k}^{o}u)\cdotp\bm{n},w\Pi_{l}^{\partial}u\right\rangle_{\partial\mathcal{T}_{h}}\\
=&\frac{1}{3}\left\langle \bm{b}(u-\Pi_{k}^{o}u)\cdotp\bm{n},\mu\Pi_{k}^{o}u\right\rangle_{\partial\mathcal{T}_{h}}+\frac{1}{3}\left\langle \bm{b}(u)\cdotp\bm{n},\mu(u-\Pi_{k}^{o}u)\right\rangle_{\partial\mathcal{T}_{h}}-\frac{1}{3}\left\langle \bm{b}(u)\cdotp\bm{n},\mu u\right\rangle_{\partial\mathcal{T}_{h}}\\
&-\frac{1}{3}\left\langle \bm{b}(u-\Pi_{k}^{o}u)\cdotp\bm{n},w\Pi_{l}^{\partial}u\right\rangle_{\partial\mathcal{T}_{h}}-\frac{1}{3}\left\langle \bm{b}(u)\cdotp\bm{n},w(u-\Pi_{l}^{\partial}u)\right\rangle_{\partial\mathcal{T}_{h}}+\frac{1}{3}\left\langle \bm{b}(u)\cdotp\bm{n},wu\right\rangle_{\partial\mathcal{T}_{h}}. 
\end{aligned}
\end{equation*}
Combining the above four equalities gives (\ref{4.1}).
\end{proof}
\begin{myLem} \label{L4.2}
For $ u\in H^{k+1}(\Omega) $, it holds
\begin{equation}
|E_{N}(u;u,w,\mu)|\lesssim h^{k}\|u\|_{k+1}\|u\|_{2}|\|(w,\mu)\||,\ \ \forall (w,\mu)\in V_{h}\times\widehat{V}_{h}. \label{4.2}
\end{equation}
\end{myLem}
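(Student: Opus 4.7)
The plan is to bound each of the eight terms in the definition of $E_N(u;u,w,\mu)$ separately, using H\"older's inequality with exponents chosen so that every factor falls under either (i) a standard $L^2$-projection error estimate, (ii) a Sobolev embedding bound by $\|u\|_2$, or (iii) the HDG norm $|\|(w,\mu)\||$ via Lemma \ref{L3.2} and Lemma \ref{L3.4}. Throughout, I will exploit the fact that $H^2(\Omega)\hookrightarrow L^\infty(\Omega)$ for $d\le 3$, so that $\|u\|_{0,\infty}\lesssim \|u\|_2$ and also $\|\Pi_k^o u\|_{0,\infty,K}\lesssim \|u\|_2$ by the $L^\infty$-stability of the $L^2$ projection on shape-regular meshes.

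For the four volume terms I will use either the split $(L^\infty,L^2,L^2)$ or $(L^3,L^6,L^2)$. For example, in the term $\tfrac13(\bm b(u-\Pi_k^o u)u,\nabla w)_{\mathcal T_h}$ I apply H\"older with $(2,\infty,2)$, estimate $\|u-\Pi_k^o u\|_{0,\mathcal T_h}\lesssim h^{k+1}\|u\|_{k+1}$, $\|u\|_\infty\lesssim\|u\|_2$, and $\|\nabla_h w\|_{0,\mathcal T_h}\lesssim|\|(w,\mu)\||$ by Lemma \ref{L3.2}, yielding a bound of order $h^{k+1}$. The critical volume term is $\tfrac13(\bm b(u)\cdot(\nabla u-\nabla\Pi_k^o u),w)_{\mathcal T_h}$, which produces the leading $h^k$ via $\|\nabla(u-\Pi_k^o u)\|_{0,\mathcal T_h}\lesssim h^{k}\|u\|_{k+1}$ together with $\|u\|_\infty\lesssim\|u\|_2$ and $\|w\|_0\lesssim|\|(w,\mu)\||$. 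The term with $\nabla\Pi_k^o u$ is handled either by rewriting $\nabla\Pi_k^o u=\nabla u-\nabla(u-\Pi_k^o u)$ and reducing to the previous case, or by a $(6,3,2)$ H\"older split, again reaching at least $h^k$.

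For the four boundary terms I will combine Lemma \ref{L3.1} (trace inequality on polynomials), the projection error estimates $\|u-\Pi_k^o u\|_{0,\partial\mathcal T_h}\lesssim h^{k+1/2}\|u\|_{k+1}$ and the analogous one for $u-\Pi_l^\partial u$, and the $L^\infty$-bound on the traces of $\Pi_k^o u$ and $\Pi_l^\partial u$. The two genuinely new ingredient are the terms that contain $\mu$ directly, namely
\[
\tfrac13\langle \bm b(u-\Pi_k^o u)\cdot\bm n,\mu\,\Pi_k^o u\rangle_{\partial\mathcal T_h}
\quad\text{and}\quad
\tfrac13\langle\bm b(u)\cdot\bm n,\mu(u-\Pi_k^o u)\rangle_{\partial\mathcal T_h}.
\]
For these I write $\mu=(\mu-w)+w$. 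The piece involving $\mu-w$ is controlled by $\|\tau^{1/2}(w-\mu)\|_{0,\partial\mathcal T_h}\lesssim|\|(w,\mu)\||$ after absorbing an $h_K^{1/2}$ factor that matches $\tau^{-1/2}=h_K^{1/2}$; this even gives an $h^{k+1}$ contribution. The piece involving $w$ is bounded using the inverse trace inequality $\|w\|_{0,\partial K}\lesssim h_K^{-1/2}\|w\|_{0,K}$ together with $\|w\|_0\lesssim|\|(w,\mu)\||$, and the $h_K^{-1/2}$ combines with $h_K^{k+1/2}$ from the projection trace estimate to deliver exactly $h^k$.

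The main obstacle will be bookkeeping: there are eight terms, and in each we must select H\"older exponents so that (a) one factor carries the full projection error in the strongest norm available, (b) the $u$-factor is placed in an $L^p$ norm dominated by $\|u\|_2$ via Sobolev embedding, and (c) the discrete test factor reduces to $|\|(w,\mu)\||$ through Lemma \ref{L3.2}, Lemma \ref{L3.4}, or the identity $\tau^{1/2}h_K^{1/2}=1$. A secondary subtlety is the boundary term with $u-\Pi_l^\partial u$ when $l=k-1$: here the sharp face estimate, combined with the trace inequality on $w$ and $\|u\|_\infty\lesssim\|u\|_2$, still yields at least $h^k$. Collecting all eight estimates and summing completes the proof of \eqref{4.2}.
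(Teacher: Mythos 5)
Your overall strategy (term-by-term H\"older estimates, $L^\infty$ control of $u$ and $\Pi_k^o u$ by $\|u\|_2$, the splitting $\mu=(\mu-w)+w$ for the two terms containing $\mu$, and the identification of $(\bm b(u)\cdot(\nabla u-\nabla\Pi_k^o u),w)_{\mathcal T_h}$ as the leading $h^k$ contribution) is exactly the route the paper takes. However, there is one genuine gap: your treatment of the last boundary term $\tfrac13\langle\bm b(u)\cdot\bm n, w(u-\Pi_l^\partial u)\rangle_{\partial\mathcal T_h}$ does not close when $l=k-1$. The sharp face estimate gives $\|u-\Pi_l^\partial u\|_{0,\partial K}\lesssim h^{l+1/2}|u|_{l+1,K}$, and pairing this with the inverse trace inequality $\|w\|_{0,\partial K}\lesssim h_K^{-1/2}\|w\|_{0,K}$, as you propose, yields only
\begin{equation*}
h^{l+1/2}\cdot h^{-1/2}=h^{l}=h^{k-1}\quad\text{when }l=k-1,
\end{equation*}
which is one order short of \eqref{4.2}. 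No choice of H\"older exponents repairs this, because the loss comes from the $h^{-1/2}$ of the inverse trace inequality, not from the integrability indices.

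The paper's fix, which your plan is missing, is to first use the cancellation
\begin{equation*}
\langle\bm b(u)\cdot\bm n\,(u-\Pi_l^\partial u),\mu\rangle_{\partial\mathcal T_h}=0,
\end{equation*}
valid because $u$, $\Pi_l^\partial u$ and $\mu$ are single-valued on each interior face while $\bm n$ flips sign between the two adjacent elements, and $\mu$ vanishes on $\partial\Omega$. This lets one replace $w$ by $w-\mu$ in that term, and then
\begin{equation*}
\|w-\mu\|_{0,\partial K}=h_K^{1/2}\bigl(\tau^{1/2}\|w-\mu\|_{0,\partial K}\bigr)\lesssim h_K^{1/2}\,|\|(w,\mu)\||
\end{equation*}
by Lemma \ref{L3.2}, so the face factor contributes $h^{+1/2}$ instead of $h^{-1/2}$ and the term is bounded by $h^{l+1}\|u\|_2\|u\|_{k+1}|\|(w,\mu)\||\leq h^{k}\|u\|_2\|u\|_{k+1}|\|(w,\mu)\||$ for both $l=k$ and $l=k-1$. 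You already use exactly this mechanism (trading $w$ or $\mu$ for $w-\mu$ and absorbing it into the stabilization part of the triple norm) for the two $\mu$-terms, so the repair is to apply the same device to this eighth term as well; with that change your argument matches the paper's proof.
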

\begin{proof}
From the H\"{o}lder inequality, the sobolev inequality, and the projection properties, we have the following estimates:
\begin{equation*}
\begin{aligned}
|(\bm{b}(u-\Pi_{k}^{o}u)u,\nabla w)_{\mathcal{T}_{h}}|&\lesssim \sum_{K\in\mathcal{T}_{h}}|u-\Pi_{k}^{o}u|_{0,2,K}|u|_{0,\infty,K}\|\nabla w\|_{0,2,K}\\
&\lesssim h^{k+1}\|u\|_{k+1}\|u\|_{2}\|\nabla w\|_{0,\mathcal{T}_{h}}\\
&\lesssim h^{k+1}\|u\|_{k+1}\|u\|_{2}|\|(w,\mu)\||,
\end{aligned}\end{equation*}

\begin{equation*}
\begin{aligned}
|(\bm{b}(\Pi_{k}^{o}u)(u-\Pi_{k}^{o}u),\nabla w)_{\mathcal{T}_{h}}|&\leq C\sum_{K\in\mathcal{T}_{h}}|\Pi_{k}^{o}u|_{0,\infty,K}\|u-\Pi_{k}^{o}u\|_{0,2,K}\|\nabla w\|_{0,2,K}\\
&\lesssim \sum_{K\in\mathcal{T}_{h}}|u|_{0,\infty}h^{k+1,K}|u|_{k+1,K}\|\nabla w\|_{0,2,K}\\
&\lesssim h^{k+1}\|u\|_{k+1}\|u\|_{2}\|\nabla w\|_{0,\mathcal{T}_{h}}\\
&\lesssim h^{k+1}\|u\|_{k+1}\|u\|_{2}|\|(w,\mu)\||,
\end{aligned}\end{equation*}

\begin{equation*}
\begin{aligned}
|(\bm{b}(u-\Pi_{k}^{o}u)\cdotp\nabla\Pi_{k}^{o}u,w)_{\mathcal{T}_{h}}|&\lesssim \sum_{K\in\mathcal{T}_{h}}|u-\Pi_{k}^{o}u|_{0,3,K}|\nabla\Pi_{k}^{o}u|_{0,2,K}\|w\|_{0,6,K}\\
&\lesssim \sum_{K\in\mathcal{T}_{h}}h^{k+1-\frac{d}{6}}|u|_{k+1,K}|u|_{1,2,K}\|w\|_{0,6,K}\\
&\lesssim h^{k+1-\frac{d}{6}}\|u\|_{k+1}\|u\|_{2}\|w\|_{0,6,\mathcal{T}_{h}},\\
&\lesssim h^{k+1-\frac{d}{6}}\|u\|_{k+1}\|u\|_{2}|\|(w,\mu)\||,
\end{aligned}\end{equation*}

\begin{equation*}
\begin{aligned}
|(\bm{b}(u)\cdotp(\nabla u-\nabla\Pi_{k}^{o}u),w)_{\mathcal{T}_{h}}|&\lesssim \sum_{K\in\mathcal{T}_{h}}|u|_{0,\infty,K}|\nabla u-\nabla\Pi_{k}^{o}u|_{0,2,K}\|w\|_{0,2,K}\\
&\lesssim \sum_{K\in\mathcal{T}_{h}}|u|_{0,\infty,K}h^{k}|u|_{k+1,K}\|w\|_{0,2,K}\\
&\lesssim h^{k}\|u\|_{k+1}\|u\|_{2}\|w\|_{0,\mathcal{T}_{h}},\\
&\lesssim h^{k}\|u\|_{k+1}\|u\|_{2}|\|(w,\mu)\||,
\end{aligned}\end{equation*}

\begin{equation*}
\begin{aligned}
&\ \ \ \ |\left\langle \bm{b}(u-\Pi_{k}^{o}u)\cdotp\bm{n},\mu\Pi_{k}^{o}u\right\rangle_{\partial\mathcal{T}_{h}}|\\
&=|\left\langle \bm{b}(u-\Pi_{k}^{o}u)\cdotp\bm{n},(w-\mu)\Pi_{k}^{o}u\right\rangle_{\partial\mathcal{T}_{h}}-\left\langle \bm{b}(u-\Pi_{k}^{o}u)\cdotp\bm{n},w\Pi_{k}^{o}u\right\rangle_{\partial\mathcal{T}_{h}}|\\
&\lesssim \sum_{K\in\mathcal{T}_{h}}(|u-\Pi_{k}^{o}u|_{0,2,\partial K}|w-\mu|_{0,2,\partial K}|\Pi_{k}^{o}u|_{0,\infty,\partial K}+|u-\Pi_{k}^{o}u|_{0,2,\partial K}|w|_{0,2,\partial K}|\Pi_{k}^{o}u|_{0,\infty,\partial K})\\
&\lesssim \sum_{K\in\mathcal{T}_{h}}(h^{k+\frac{1}{2}}|u|_{k+1,K}|w-\mu|_{0,2,\partial K}|u|_{0,\infty,K}+h^{k+\frac{1}{2}}|u|_{k+1,K}h^{-\frac{1}{2}}|w|_{0,2,K}|u|_{0,\infty,K})\\
&\lesssim h^{k}\|u\|_{k+1}\|u\|_{2}|\|(w,\mu)\||,\\
\\
&\ \ \ \ |\left\langle \bm{b}(u)\cdotp\bm{n},\mu(u-\Pi_{k}^{o}u)\right\rangle_{\partial\mathcal{T}_{h}}|\\
&=|\left\langle \bm{b}(u)\cdotp\bm{n},(w-\mu)(u-\Pi_{k}^{o}u)\right\rangle_{\partial\mathcal{T}_{h}}-|\left\langle \bm{b}(u)\cdotp\bm{n},w(u-\Pi_{k}^{o}u)\right\rangle_{\partial\mathcal{T}_{h}}||\\
&\lesssim \sum_{K\in\mathcal{T}_{h}}(|u|_{0,\infty,\partial K}|w-\mu|_{0,2,\partial K}|u-\Pi_{k}^{o}u|_{0,2,\partial K}+|u|_{0,\infty,\partial K}|w|_{0,2,\partial K}|u-\Pi_{k}^{o}u|_{0,2,\partial K})\\
&\lesssim \sum_{K\in\mathcal{T}_{h}}(|u|_{0,\infty,K}|w-\mu|_{0,2,\partial K}h^{k+\frac{1}{2}}|u|_{k+1,K}+|u|_{0,\infty,K}h^{-\frac{1}{2}}|w|_{0,2,K}h^{k+\frac{1}{2}}|u|_{k+1,K})\\
&\lesssim h^{k}\|u\|_{k+1}\|u\|_{2}|\|(w,\mu)\||,
\end{aligned}
\end{equation*}

\begin{equation*}
\begin{aligned}
&\ \ \ \ |\left\langle \bm{b}(u-\Pi_{k}^{o}u)\cdotp\bm{n},w\Pi_{l}^{\partial}u\right\rangle_{\partial\mathcal{T}_{h}}|\\
&\lesssim \sum_{K\in\mathcal{T}_{h}}|u-\Pi_{k}^{o}u|_{0,2,\partial K}|w|_{0,2,\partial K}|\Pi_{l}^{\partial}u|_{0,\infty,\partial K}\\
&\lesssim \sum_{K\in\mathcal{T}_{h}}h^{k+\frac{1}{2}}|u|_{k+1,K}h^{-\frac{1}{2}}|w|_{0,2,K}|u|_{0,\infty,K}\\
&\lesssim h^{k}\|u\|_{k+1}\|u\|_{2}|\|(w,\mu)\||,\\
\\
&\ \ \ \ |\left\langle \bm{b}(u)\cdotp\bm{n},w(u-\Pi_{l}^{\partial}u)\right\rangle_{\partial\mathcal{T}_{h}}|=|\left\langle \bm{b}(u)\cdotp\bm{n}(u-\Pi_{l}^{\partial}u),w-\mu\right\rangle_{\partial\mathcal{T}_{h}}|\\
&\lesssim \sum_{K\in\mathcal{T}_{h}}|u|_{0,\infty,\partial K}|u-\Pi_{l}^{\partial}u|_{0,2,\partial K}|w-\mu|_{0,2,\partial K}\\
&\lesssim \sum_{K\in\mathcal{T}_{h}}|u|_{0,\infty,K}h^{l+\frac{1}{2}}|u|_{l+1,K}h^{\frac{1}{2}}(h^{-\frac{1}{2}}|w-\mu|_{0,2,\partial K})\\
&\lesssim h^{k}\|u\|_{k+1}\|u\|_{2}|\|(w,\mu)\||.
\end{aligned}
\end{equation*}
As a result, the desired result follows from the definition of $ E_{N}(u;u,w,\mu) $. 
\end{proof}
\begin{myLem} \label{L4.3}
Let $ (u,q) $ be the solution to the problem (\ref{2.5}), then for any $ (w,\mu)\in V_{h}\times\widehat{V}_{h} $ it holds
\begin{equation}
\left\{
\begin{aligned}
\boldsymbol{\Pi}_{k-1}^{o}\bm{q}=&-\mathcal{K}_{h}(\Pi_{k}^{o}u,\Pi_{l}^{\partial}u),\\
((\Pi_{k}^{o}u)_{t},w)_{\mathcal{T}_{h}}&+\nu(\mathcal{K}_{h}(\Pi_{k}^{o}u,\Pi_{l}^{\partial}u),\mathcal{K}_{h}(w,\mu))_{\mathcal{T}_{h}}
+\mathcal{S}_{h}(\Pi_{k}^{o}u,\Pi_{l}^{\partial}u;w,\mu)+\mathcal{B}_{h}(\Pi_{k}^{o}u;\Pi_{k}^{o}u,\Pi_{l}^{\partial}u;w,\mu)\\
=&((\Pi_{k}^{o}u-u)_{t},w)_{\mathcal{T}_{h}}+(f,w)+E_{L}(u,w,\mu)+E_{N}(u;u,w,\mu), \label{4.3}
\end{aligned}
\right.
\end{equation}
where
\begin{equation*}
\begin{aligned}
E_{L}(u,w,\mu):=&-\nu\left\langle (\nabla u-\boldsymbol{\Pi}_{k-1}^{o}\nabla u)\cdotp\bm{n},w-\mu\right\rangle_{\partial\mathcal{T}_{h}}
+\nu\left\langle\tau(\Pi_{k}^{o}u-u),\Pi_{l}^{\partial}w-\mu \right\rangle_{\partial\mathcal{T}_{h}}.
\end{aligned}
\end{equation*}
\end{myLem}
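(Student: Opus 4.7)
The plan is to verify each identity by inserting the continuous solution into the components of the discrete scheme and tracking the errors created when $u$, $\bm{q}$, and the trace of $u$ are replaced by their $L^{2}$ projections. For the first identity I would test the candidate $-\mathcal{K}_{h}(\Pi_{k}^{o}u,\Pi_{l}^{\partial}u)$ against an arbitrary $\bm{r}\in \bm{Q}_{h}$ using the defining relation of $\mathcal{K}_{h}$. Since $\nabla_{h}\cdot\bm{r}|_{K}\in P_{k-1}(K)\subset P_{k}(K)$ and $\bm{r}\cdot\bm{n}|_{e}\in P_{k-1}(e)\subset P_{l}(e)$, the projections $\Pi_{k}^{o}$ and $\Pi_{l}^{\partial}$ can be removed inside $(\Pi_{k}^{o}u,\nabla_{h}\cdot\bm{r})_{\mathcal{T}_{h}}-\langle\Pi_{l}^{\partial}u,\bm{r}\cdot\bm{n}\rangle_{\partial\mathcal{T}_{h}}$. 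Element-wise integration by parts, combined with the continuity of $u$ across faces and the homogeneous boundary condition, cancels the boundary contribution and leaves $-(\nabla u,\bm{r})_{\mathcal{T}_{h}}=(\bm{q},\bm{r})_{\mathcal{T}_{h}}=(\boldsymbol{\Pi}_{k-1}^{o}\bm{q},\bm{r})_{\mathcal{T}_{h}}$, yielding the first identity.

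For the second identity I would multiply the strong form $u_{t}+\nu\nabla\cdot\bm{q}+\bm{b}(u)\cdot\nabla u=f$ by $w\in V_{h}$ and sum over $\mathcal{T}_{h}$. The viscous contribution is recast as $\nu(w,\nabla\cdot\bm{q})_{\mathcal{T}_{h}}=\nu\,\mathcal{C}_{h}(w,\mu;\bm{q})$, since $\bm{q}\cdot\bm{n}$ is single-valued on interior faces and $\mu=0$ on $\partial\Omega$, forcing $\langle\mu,\bm{q}\cdot\bm{n}\rangle_{\partial\mathcal{T}_{h}}=0$. Splitting $\bm{q}=\boldsymbol{\Pi}_{k-1}^{o}\bm{q}+(\bm{q}-\boldsymbol{\Pi}_{k-1}^{o}\bm{q})$ and using $\mathcal{C}_{h}(w,\mu;\bm{r})=-(\mathcal{K}_{h}(w,\mu),\bm{r})_{\mathcal{T}_{h}}$ for $\bm{r}\in \bm{Q}_{h}$ together with the first identity, the projected piece becomes $\nu(\mathcal{K}_{h}(\Pi_{k}^{o}u,\Pi_{l}^{\partial}u),\mathcal{K}_{h}(w,\mu))_{\mathcal{T}_{h}}$, matching a term of the target LHS. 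The remainder $\nu\,\mathcal{C}_{h}(w,\mu;\bm{q}-\boldsymbol{\Pi}_{k-1}^{o}\bm{q})$ collapses, via element-wise integration by parts together with the orthogonality $(\nabla_{h}w,\bm{q}-\boldsymbol{\Pi}_{k-1}^{o}\bm{q})_{\mathcal{T}_{h}}=0$ (since $\nabla_{h}w\in \bm{Q}_{h}$), to the face expression $\nu\langle w-\mu,(\bm{q}-\boldsymbol{\Pi}_{k-1}^{o}\bm{q})\cdot\bm{n}\rangle_{\partial\mathcal{T}_{h}}$, which, after substituting $\bm{q}=-\nabla u$, is the flux contribution to $E_{L}$.

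The nonlinear term is handled by Lemma 4.1, which rewrites $(\bm{b}(u)\cdot\nabla u,w)_{\mathcal{T}_{h}}=\mathcal{B}_{h}(\Pi_{k}^{o}u;\Pi_{k}^{o}u,\Pi_{l}^{\partial}u;w,\mu)-E_{N}(u;u,w,\mu)$, and the time derivative is split as $(u_{t},w)_{\mathcal{T}_{h}}=((\Pi_{k}^{o}u)_{t},w)_{\mathcal{T}_{h}}-((\Pi_{k}^{o}u-u)_{t},w)_{\mathcal{T}_{h}}$ (the latter vanishes for $w\in V_{h}$, which is why it may be carried on either side). Adding $\mathcal{S}_{h}(\Pi_{k}^{o}u,\Pi_{l}^{\partial}u;w,\mu)$ to both sides and using that $\tau|_{\partial K}=h_{K}^{-1}$ is constant face-by-face, the projection property of $\Pi_{l}^{\partial}$ collapses this stabilization to $\nu\langle\tau(\Pi_{k}^{o}u-u),\Pi_{l}^{\partial}w-\mu\rangle_{\partial\mathcal{T}_{h}}$, the stabilization contribution to $E_{L}$. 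Collecting all pieces yields the claimed equation. The most delicate step is the treatment of $\nu(w,\nabla\cdot\bm{q})_{\mathcal{T}_{h}}$, where two cancellations must be arranged simultaneously---insertion of the exact identity $\langle\mu,\bm{q}\cdot\bm{n}\rangle_{\partial\mathcal{T}_{h}}=0$ and the $L^{2}$-orthogonality against $\nabla_{h}w\in \bm{Q}_{h}$; once these are in place, the rest is bookkeeping with the projection definitions and Lemma 4.1.
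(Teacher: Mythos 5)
Your proposal is correct and follows essentially the same route as the paper's proof: the same splitting of the viscous term through $\boldsymbol{\Pi}_{k-1}^{o}\bm{q}$, the same element-wise integration by parts combined with the orthogonality $(\nabla_{h}w,\bm{q}-\boldsymbol{\Pi}_{k-1}^{o}\bm{q})_{\mathcal{T}_{h}}=0$ and the single-valuedness of $\bm{q}\cdot\bm{n}$ paired with $\mu$, the same collapse of the stabilization term via the $\Pi_{l}^{\partial}$ projection property, and Lemma \ref{L4.1} for the nonlinear part --- merely run from the tested PDE toward the discrete identity rather than the reverse, with the added (and welcome) explicit verification of the first identity, which the paper uses without proof. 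The one discrepancy is the sign of the first term of $E_{L}$: since your face term arises on the left of the tested equation, it appears as $+\nu\left\langle(\nabla u-\boldsymbol{\Pi}_{k-1}^{o}\nabla u)\cdot\bm{n},w-\mu\right\rangle_{\partial\mathcal{T}_{h}}$ once moved to the right, opposite to the stated $E_{L}$ --- but the paper's own proof commits the identical sign slip in passing from $\bm{q}-\boldsymbol{\Pi}_{k-1}^{o}\bm{q}$ to $\nabla u-\boldsymbol{\Pi}_{k-1}^{o}\nabla u$, and only $|E_{L}|$ is ever used downstream, so this is immaterial.
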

\begin{proof}
By the definition of $ \mathcal{K}_{h} $, we   get
\begin{equation*}
\begin{aligned}
&((\Pi_{k}^{o}u)_{t},w)_{\mathcal{T}_{h}}+\nu(\mathcal{K}_{h}(\Pi_{k}^{o}u,\Pi_{l}^{\partial}u),\mathcal{K}_{h}(w,\mu))_{\mathcal{T}_{h}} +\mathcal{S}_{h}(\Pi_{k}^{o}u,\Pi_{l}^{\partial}u;w,\mu)+\mathcal{B}_{h}(\Pi_{k}^{o}u;\Pi_{k}^{o}u,\Pi_{l}^{\partial}u;w,\mu)\\
=&((\Pi_{k}^{o}u)_{t},w)_{\mathcal{T}_{h}}
+\left(\nu(\nabla\cdotp \boldsymbol{\Pi}_{k-1}^{o}\bm{q},w)_{\mathcal{T}_{h}}-\nu\left\langle \boldsymbol{\Pi}_{k-1}^{o}\bm{q}\cdotp\bm{n},\mu\right\rangle_{\partial\mathcal{T}_{h}}\right)
+\nu\left\langle\tau(\Pi_{l}^{\partial}\Pi_{k}^{o}u-\Pi_{l}^{\partial}u),\Pi_{l}^{\partial}w-\mu \right\rangle_{\partial\mathcal{T}_{h}}\\   
&+\mathcal{B}_{h}(\Pi_{k}^{o}u;\Pi_{k}^{o}u,\Pi_{l}^{\partial}u;w,\mu)\\
:=&\sum_{i=1}^{4}R_{i}.
\end{aligned}
\end{equation*}
From the Green’s formula, the
properties of the projections $ \boldsymbol{\Pi}_{k-1}^{o}, \Pi_{k}^{o} $ and $ \Pi_{l}^{\partial} $, it follows
\begin{equation*}
\begin{aligned}
R_{1}&=((\Pi_{k}^{o}u-u)_{t},w)_{\mathcal{T}_{h}}+(u_{t},w),\\
R_{2}&=\nu(\nabla\cdotp \boldsymbol{\Pi}_{k-1}^{o}\bm{q},w)_{\mathcal{T}_{h}}-\nu\left\langle \boldsymbol{\Pi}_{k-1}^{o}\bm{q}\cdotp\textbf{n},\mu\right\rangle_{\partial\mathcal{T}_{h}}\\
&=-\nu(\nabla\cdotp\bm{q}- \nabla\cdotp \boldsymbol{\Pi}_{k-1}^{o}\bm{q},w)_{\mathcal{T}_{h}}+\nu(\nabla\cdotp\bm{q},w)_{\mathcal{T}_{h}}   -\nu\left\langle (\bm{q}-\boldsymbol{\Pi}_{k-1}^{o}\bm{q})\cdotp\bm{n},w-\mu\right\rangle_{\partial\mathcal{T}_{h}}\\
&\quad +\nu\left\langle (\bm{q}-\boldsymbol{\Pi}_{k-1}^{o}\bm{q})\cdotp\bm{n},w\right\rangle_{\partial\mathcal{T}_{h}}
  -\nu\left\langle (\bm{q}\cdotp\bm{n},\mu\right\rangle_{\partial\mathcal{T}_{h}}\\
&=\nu(\bm{q}-\boldsymbol{\Pi}_{k-1}^{o}\bm{q},\nabla w)_{\mathcal{T}_{h}}-\nu\left\langle (\bm{q}-\boldsymbol{\Pi}_{k-1}^{o}\bm{q})\cdotp\bm{n},w-\mu\right\rangle_{\partial\mathcal{T}_{h}} +\nu(\nabla\cdotp\bm{q}, w)\\
&=-\nu\left\langle (\nabla u-\boldsymbol{\Pi}_{k-1}^{o}\nabla u)\cdotp\bm{n},w-\mu\right\rangle_{\partial\mathcal{T}_{h}}-\nu(\Delta u, w),\\
R_{3}&=\nu\left\langle\tau(\Pi_{l}^{\partial}\Pi_{k}^{o}u-\Pi_{l}^{\partial}u),\Pi_{l}^{\partial}w-\mu \right\rangle_{\partial\mathcal{T}_{h}} =\nu\left\langle\tau(\Pi_{k}^{o}u-u),\Pi_{l}^{\partial}w-\mu \right\rangle_{\partial\mathcal{T}_{h}},
\end{aligned}
\end{equation*}
and, by (\ref{4.1}) we have
\begin{equation*}
R_{4}=(\bm{b}(u)\cdot\nabla u,w)+E_{N}(u;u,w,\mu).
\end{equation*}
Finally, combining the above equations implies the desired relation (\ref{4.3}).
\end{proof}
\begin{myLem} \label{L4.4}
For $ u\in H^{k+1}(\Omega) $, it holds
\begin{equation}
|E_{L}(u,w,\mu)|\lesssim \nu h^{k}\|u\|_{k+1}|\|(w,\mu)\||,\ \ \forall (w,\mu)\in V_{h}\times\widehat{V}_{h}. \label{4.4}
\end{equation}
\end{myLem}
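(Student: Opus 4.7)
The plan is to bound the two face terms in $E_L(u,w,\mu)$ separately, in each case applying Cauchy--Schwarz over $\partial\mathcal{T}_h$, using a trace estimate together with the standard $L^2$-projection error bounds recalled at the beginning of Section~3.4 to pick up the factor $h^k\|u\|_{k+1}$, and then absorbing the remaining boundary norm into $|\|(w,\mu)\||$ via Lemma~\ref{L3.2} or directly from the definition of $|\|(\cdot,\cdot)\||$. Write
\begin{equation*}
E_L(u,w,\mu) = E_L^{(1)} + E_L^{(2)},
\end{equation*}
where $E_L^{(1)} := -\nu\langle(\nabla u-\boldsymbol{\Pi}_{k-1}^{o}\nabla u)\cdot\bm{n},\,w-\mu\rangle_{\partial\mathcal{T}_h}$ and $E_L^{(2)} := \nu\langle\tau(\Pi_k^{o}u-u),\,\Pi_l^{\partial}w-\mu\rangle_{\partial\mathcal{T}_h}$.

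For $E_L^{(1)}$, I would apply Lemma~\ref{L3.1} to $\nabla u-\boldsymbol{\Pi}_{k-1}^{o}\nabla u$ componentwise together with the standard interior $L^2$-projection estimate $\|\nabla u-\boldsymbol{\Pi}_{k-1}^{o}\nabla u\|_{0,K}\lesssim h_K^{k}|u|_{k+1,K}$ and $|\nabla u-\boldsymbol{\Pi}_{k-1}^{o}\nabla u|_{1,K}\lesssim h_K^{k-1}|u|_{k+1,K}$, which yields
\begin{equation*}
\|(\nabla u-\boldsymbol{\Pi}_{k-1}^{o}\nabla u)\cdot\bm{n}\|_{0,\partial K}\lesssim h_K^{k-\frac{1}{2}}|u|_{k+1,K}.
\end{equation*}
Cauchy--Schwarz on $\partial\mathcal{T}_h$ and regrouping the $h_K^{-1/2}$ factor with $w-\mu$ then gives
\begin{equation*}
|E_L^{(1)}|\lesssim \nu h^{k}\|u\|_{k+1}\,\|\tau^{\frac{1}{2}}(w-\mu)\|_{0,\partial\mathcal{T}_h},
\end{equation*}
and Lemma~\ref{L3.2} bounds the last factor by $|\|(w,\mu)\||$.

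For $E_L^{(2)}$, since $\tau|_{\partial K}=h_K^{-1}$, Cauchy--Schwarz directly produces
\begin{equation*}
|E_L^{(2)}|\le \nu\Bigl(\sum_{K}h_K^{-1}\|\Pi_k^{o}u-u\|_{0,\partial K}^{2}\Bigr)^{\frac{1}{2}}\|\tau^{\frac{1}{2}}(\Pi_l^{\partial}w-\mu)\|_{0,\partial\mathcal{T}_h}.
\end{equation*}
The first factor is controlled by $h^{k}\|u\|_{k+1}$ using the listed projection estimate $\|u-\Pi_k^{o}u\|_{0,\partial\mathcal{T}_h}\lesssim h^{k+\frac{1}{2}}\|u\|_{k+1}$, and the second factor is bounded by $|\|(w,\mu)\||$ straight from the definition of the triple-bar norm. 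Adding the two estimates gives (\ref{4.4}).

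There is no real obstacle here; the proof is a standard trace/projection/Cauchy--Schwarz exercise, and the only point requiring attention is tracking the $h_K^{-1/2}$ factor produced by the trace inequality in $E_L^{(1)}$ and making sure it pairs with $w-\mu$ (rather than $\Pi_l^{\partial}w-\mu$) so that Lemma~\ref{L3.2} applies.
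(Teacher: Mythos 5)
Your proposal is correct and follows essentially the same route as the paper: both split $E_L$ into the same two face terms, bound the first via the trace inequality of Lemma~\ref{L3.1} applied to $\nabla u-\boldsymbol{\Pi}_{k-1}^{o}\nabla u$ with the $h_K^{-1/2}$ factor moved onto $w-\mu$ so that Lemma~\ref{L3.2} applies, and bound the second by Cauchy--Schwarz with the boundary projection estimate and the $\|\tau^{1/2}(\Pi_l^{\partial}w-\mu)\|_{0,\partial\mathcal{T}_h}$ term of the triple-bar norm. No further comment is needed.
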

\begin{proof} The desired conclusion follows from 
\begin{eqnarray*}
  |\left\langle (\nabla u-\boldsymbol{\Pi}_{k-1}^{o}\nabla u)\cdotp\bm{n},w-\mu\right\rangle_{\partial\mathcal{T}_{h}}|
&\lesssim& \sum_{K\in\mathcal{T}_{h}}|\nabla u-\boldsymbol{\Pi}_{k-1}^{o}\nabla u|_{0,2,\partial K}h^{\frac{1}{2}}(h^{-\frac{1}{2}}|w-\mu|_{0,2,\partial K})\\
&\lesssim& h^{k}\|\nabla u\|_{k}|\|(w,\mu)\||,\\
  |\left\langle\tau(\Pi_{k}^{o}u-u),\Pi_{l}^{\partial}w-\mu \right\rangle_{\partial\mathcal{T}_{h}}|
&\lesssim& \sum_{K\in\mathcal{T}_{h}}\tau^{\frac{1}{2}}|\Pi_{k}^{o}u-u|_{0,2,\partial K}\tau^{\frac{1}{2}}|\Pi_{l}^{\partial}w-\mu|_{0,2,\partial K}\\
&\lesssim& h^{k}\|u\|_{k+1}|\|(w,\mu)\||.
\end{eqnarray*}

\end{proof}

%
Set
\begin{equation*}
\begin{aligned}
\xi_{h}^{\bm{q}}&:=\boldsymbol{\Pi}_{k-1}^{o}\bm{q}-\bm{q}_{h},\ \ \xi_{h}^{u}:=\Pi_{k}^{o}u-u_{h},\ \ \xi_{h}^{\widehat{u}}:=\Pi_{l}^{\partial}u-\widehat{u}_{h},\\
\eta_{h}^{\bm{q}}&:=\bm{q}-\boldsymbol{\Pi}_{k-1}^{o}\bm{q},\ \ \ \  \eta_{h}^{u}:=u-\Pi_{k}^{o}u,\ \ \  \eta_{h}^{\widehat{u}}:=u-\Pi_{l}^{\partial}u.
\end{aligned}
\end{equation*}
By substracting (\ref{2.12}) from (\ref{4.3}), we can obtain the following error equations.
\begin{myLem}[] 
	Let $ (u,q) $ be the solution to the problem (\ref{2.5}) and $ (\bm{q}_{h},u_{h},\widehat{u}_{h})\in  \bm{Q}_{h}\times V_{h}\times\widehat{V}_{h} $ be the solution of (\ref{2.12}). Then, for any $ (w,\mu)\in V_{h}\times\widehat{V}_{h} $, we have
	\begin{equation}
	\left\{
	\begin{aligned}
	\xi_{h}^{\bm{q}}+\mathcal{K}_{h}(\xi_{h}^{u},\xi_{h}^{\widehat{u}})&=0,\\
	((\xi_{h}^{u})_{t},w)_{\mathcal{T}_{h}}+\nu(\mathcal{K}_{h}(\xi_{h}^{u},\xi_{h}^{\widehat{u}}),\mathcal{K}_{h}(w,\mu))_{\mathcal{T}_{h}}+\mathcal{S}_{h}(\xi_{h}^{u},\xi_{h}^{\widehat{u}};w,\mu)\\
	+\mathcal{B}_{h}(\Pi_{k}^{o}u;\Pi_{k}^{o}u,\Pi_{l}^{\partial}u;w,\mu)-\mathcal{B}_{h}(u_{h};u_{h},\widehat{u}_{h};w,\mu)\\
	-((\Pi_{k}^{o}u-u)_{t},w)_{\mathcal{T}_{h}}-E_{L}(u,w,\mu)-E_{N}(u;u,w,\mu)
	&=0.   \label{4.5}
	\end{aligned}
	\right.
	\end{equation}
\end{myLem}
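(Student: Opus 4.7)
The plan is to obtain the error equations by directly subtracting the semi-discrete HDG system \eqref{2.12} from the projected identity \eqref{4.3} in Lemma \ref{L4.3}, and then using the linearity of $\mathcal{K}_h$, $\mathcal{S}_h$ and the trace projection $\Pi_l^{\partial}$ to rewrite differences of projected quantities as the error quantities $\xi_h^{\bm q}, \xi_h^{u}, \xi_h^{\widehat u}$.

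For the first equation, I will start from the first line of \eqref{4.3}, $\boldsymbol{\Pi}_{k-1}^{o}\bm q = -\mathcal{K}_h(\Pi_k^o u,\Pi_l^{\partial}u)$, and from the first line of \eqref{2.12}, $\bm q_h = -\mathcal{K}_h(u_h,\widehat u_h)$. Subtracting and using that $\mathcal{K}_h$ is linear in its two arguments (which is immediate from its defining variational identity), I obtain $\xi_h^{\bm q} = \boldsymbol{\Pi}_{k-1}^{o}\bm q - \bm q_h = -\mathcal{K}_h(\Pi_k^o u - u_h,\Pi_l^{\partial}u - \widehat u_h) = -\mathcal{K}_h(\xi_h^u,\xi_h^{\widehat u})$, which is the first line of \eqref{4.5}.

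For the second equation, I will subtract the second line of \eqref{2.12} from the second line of \eqref{4.3}. On the left-hand side, the time-derivative terms combine into $((\Pi_k^o u - u_h)_t,w)_{\mathcal{T}_h} = ((\xi_h^u)_t,w)_{\mathcal{T}_h}$; the $\mathcal{K}_h$-terms combine, by bilinearity of $(\cdot,\cdot)_{\mathcal{T}_h}$ and linearity of $\mathcal{K}_h$ in its first slot, into $\nu(\mathcal{K}_h(\xi_h^u,\xi_h^{\widehat u}),\mathcal{K}_h(w,\mu))_{\mathcal{T}_h}$; the stabilization terms combine, by linearity of $\mathcal{S}_h$ in $(u_h,\widehat u_h)$, into $\mathcal{S}_h(\xi_h^u,\xi_h^{\widehat u};w,\mu)$. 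The two $\mathcal{B}_h$ terms do \emph{not} simplify because $\mathcal{B}_h$ is nonlinear, so they are kept as the difference $\mathcal{B}_h(\Pi_k^o u;\Pi_k^o u,\Pi_l^{\partial}u;w,\mu) - \mathcal{B}_h(u_h;u_h,\widehat u_h;w,\mu)$. On the right-hand side the source term $(f,w)_{\mathcal{T}_h} = (f,w)$ cancels between the two systems, leaving $-((\Pi_k^o u - u)_t,w)_{\mathcal{T}_h} - E_L(u,w,\mu) - E_N(u;u,w,\mu)$; moving these to the left yields exactly the second identity of \eqref{4.5}.

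The argument is entirely algebraic, so there is no real obstacle; the only thing to be careful about is tracking signs in the time-derivative consistency term (the term $((\Pi_k^o u - u)_t,w)_{\mathcal{T}_h}$ appears on the right of \eqref{4.3} with a plus sign, and hence on the left of \eqref{4.5} with a minus sign) and making sure that the linearity of $\mathcal{K}_h$, $\mathcal{S}_h$ and $\Pi_l^{\partial}$ is applied only where it is legitimate, i.e. \emph{not} to $\mathcal{B}_h$.
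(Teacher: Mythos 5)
Your proposal is correct and coincides with the paper's own (essentially one-line) argument: the paper states the lemma precisely as the result of subtracting \eqref{2.12} from \eqref{4.3}, which is exactly what you do, with the linearity of $\mathcal{K}_h$ and $\mathcal{S}_h$ handling all terms except the nonlinear $\mathcal{B}_h$ difference, which is left as is. Your sign bookkeeping for the $((\Pi_k^o u-u)_t,w)_{\mathcal{T}_h}$, $E_L$ and $E_N$ terms is also consistent with \eqref{4.5}.
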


\begin{myLem} \label{L4.6}
Assume that $ u\in H^{2}(\Omega) $, then we have
\begin{equation} \label{4.6}
\mathcal{B}_{h}(\xi_{h}^{u};\Pi_{k}^{o}u,\Pi_{l}^{\partial}u;\xi_{h}^{u},\xi_{h}^{\widehat{u}})\lesssim \|\xi_ {h}^{u}\|_{0,\mathcal{T}_{h}}\cdotp\|u\|_{2}\cdotp|\|(\xi_{h}^{u},\xi_{h}^{\widehat{u}})\||, \forall (\xi_{h}^{u},\xi_{h}^{\widehat{u}})\in   V_{h}\times\widehat{V}_{h}.
\end{equation}
\end{myLem}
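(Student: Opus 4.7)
The plan is to imitate the boundedness argument for $\mathcal{B}_h$ used in Lemma \ref{L3.6}, but to place one copy of $\xi_h^u$ into $L^2(\Omega)$ rather than $L^3(\Omega)$, compensating by putting the factors involving $\Pi_k^o u$ and $\Pi_l^\partial u$ into stronger norms that are controlled by $\|u\|_2$. The key analytic ingredients I expect to lean on are the Sobolev embeddings $H^2(\Omega)\hookrightarrow L^\infty(\Omega)$ and $H^1(\Omega)\hookrightarrow L^6(\Omega)$ (both valid for $d=2,3$), together with standard $L^p$-stability and interpolation estimates for the projections $\Pi_k^o$ and $\Pi_l^\partial$.

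Opening up the trilinear form gives four pieces,
\[
3\mathcal{B}_h(\xi_h^u;\Pi_k^o u,\Pi_l^\partial u;\xi_h^u,\xi_h^{\widehat u}) = T_1 + T_2 + T_3 + T_4,
\]
with $T_1 := (\bm{b}(\xi_h^u)\cdot\nabla\Pi_k^o u,\xi_h^u)_{\mathcal T_h}$, $T_2 := -(\bm{b}(\xi_h^u)\Pi_k^o u,\nabla\xi_h^u)_{\mathcal T_h}$, $T_3 := -\langle\bm{b}(\xi_h^u)\cdot\bm{n}\,\Pi_k^o u,\xi_h^{\widehat u}\rangle_{\partial\mathcal T_h}$ and $T_4 := \langle\bm{b}(\xi_h^u)\cdot\bm{n}\,\Pi_l^\partial u,\xi_h^u\rangle_{\partial\mathcal T_h}$. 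For $T_1$ I would apply H\"older with exponents $(2,6,3)$, bound $\|\nabla\Pi_k^o u\|_{0,6}\lesssim \|u\|_2$ via $H^1\hookrightarrow L^6$ plus a standard $W^{1,6}$ interpolation estimate, and invoke Lemma \ref{L3.4} (with $\tilde q=3$) to bound $\|\xi_h^u\|_{0,3,\mathcal T_h}$ by $|\|(\xi_h^u,\xi_h^{\widehat u})\||$. For $T_2$ I would use H\"older $(2,\infty,2)$, bound $\|\Pi_k^o u\|_{0,\infty}\lesssim \|u\|_2$ from $H^2\hookrightarrow L^\infty$ together with $L^\infty$-stability of $\Pi_k^o$, and use Lemma \ref{L3.2} to control $\|\nabla_h\xi_h^u\|_{0,\mathcal T_h}$ by $|\|(\xi_h^u,\xi_h^{\widehat u})\||$. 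Both volume terms then match the target bound.

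For the boundary terms I would first use the algebraic identity
\[
T_3 + T_4 = \langle\bm{b}(\xi_h^u)\cdot\bm{n}\,\Pi_k^o u,\xi_h^u-\xi_h^{\widehat u}\rangle_{\partial\mathcal T_h} + \langle\bm{b}(\xi_h^u)\cdot\bm{n}\,(\Pi_l^\partial u-\Pi_k^o u),\xi_h^u\rangle_{\partial\mathcal T_h} =: S_1 + S_2,
\]
and then split $\xi_h^u=(\xi_h^u-\xi_h^{\widehat u})+\xi_h^{\widehat u}$ in one slot of $S_2$. For $S_1$ and the $(\xi_h^u-\xi_h^{\widehat u})$-piece of $S_2$, H\"older $(2,\infty,2)$ on each $\partial K$ together with Lemma \ref{L3.1} lets me trade $\|\xi_h^u\|_{0,2,\partial K}$ for $h_K^{-1/2}\|\xi_h^u\|_{0,K}$; the $h_K^{-1/2}$ is then absorbed into the $\tau^{1/2}=h_K^{-1/2}$ weighting the jump $\xi_h^u-\xi_h^{\widehat u}$, and Lemma \ref{L3.2} bounds the jump norm by $|\|(\xi_h^u,\xi_h^{\widehat u})\||$, while $\|\Pi_k^o u\|_\infty$ and $\|\Pi_l^\partial u-\Pi_k^o u\|_\infty$ are controlled by $\|u\|_2$ via Sobolev embedding and projection stability.

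The main obstacle is the leftover piece $\langle\bm{b}(\xi_h^u)\cdot\bm{n}\,(\Pi_l^\partial u-\Pi_k^o u),\xi_h^{\widehat u}\rangle_{\partial\mathcal T_h}$, where $\xi_h^{\widehat u}$ is unprotected by any jump structure and no naive H\"older bound supplies enough positive $h$-power to offset the $h^{-1}$ loss coming from two face norms of $\xi_h^u$-type. My plan is to exploit that $\bm{b}\cdot\bm{n}$ is constant on each face and that $\xi_h^{\widehat u}\in P_l(e)$, rewrite $\Pi_l^\partial u-\Pi_k^o u = \Pi_l^\partial(u-\Pi_k^o u) - (I-\Pi_l^\partial)\Pi_k^o u$ (the second correction vanishing when $l=k$ and being a polynomial projection error otherwise), and then use the orthogonality $\langle\Pi_l^\partial f,g\rangle_e=\langle f,\Pi_l^\partial g\rangle_e$ to transfer $\Pi_l^\partial$ onto the product $\xi_h^u\xi_h^{\widehat u}\bm{b}\cdot\bm{n}$. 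Combining the resulting interpolation estimate $\|u-\Pi_k^o u\|_{0,\partial K}\lesssim h_K^{3/2}\|u\|_{2,K}$ with the $L^4$-bound $\|\xi_h^u\|_{0,4,\mathcal T_h}\lesssim |\|(\xi_h^u,\xi_h^{\widehat u})\||$ from Lemma \ref{L3.4} and Lemma \ref{L3.1}, and finally using Lemma \ref{L3.4} with $\tilde q=2$ to reinsert a factor $\|\xi_h^u\|_{0,\mathcal T_h}$, delivers the required estimate (\ref{4.6}).
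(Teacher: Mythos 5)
Your handling of the volume terms, of $S_1$, and of the $(\xi_h^u-\xi_h^{\widehat u})$-part of $S_2$ is sound and essentially reproduces the paper's estimates of $R_1$, $R_2$, $R_3$. The gap is in the remaining boundary term. The paper never splits the test slot: it keeps $R_4=-\left\langle \bm{b}(\xi_h^u)\cdot\bm{n}(\Pi_k^o u-\Pi_l^{\partial}u),\xi_h^u\right\rangle_{\partial\mathcal{T}_h}$ whole and bounds it by H\"older with exponents $(3,2,6)$ on each face, trace inequalities (Lemma \ref{L3.1}) on the two $\xi_h^u$ factors, the inverse estimate $\|\xi_h^u\|_{0,3,K}\lesssim h^{-d/6}\|\xi_h^u\|_{0,2,K}$ --- which is precisely what manufactures the $\|\xi_h^u\|_{0,\mathcal{T}_h}$ factor --- and the $O(h^{3/2})$ face estimates for $u-\Pi_k^o u$ and $u-\Pi_l^{\partial}u$, which supply enough positive powers of $h$ to cancel all the losses. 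By instead writing $\xi_h^u=(\xi_h^u-\xi_h^{\widehat u})+\xi_h^{\widehat u}$ inside $S_2$ you create the term $\left\langle \bm{b}(\xi_h^u)\cdot\bm{n}(\Pi_l^{\partial}u-\Pi_k^o u),\xi_h^{\widehat u}\right\rangle_{\partial\mathcal{T}_h}$, which you correctly identify as the obstacle but do not actually resolve.

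The plan you sketch for that leftover does not work as written. First, $\bm{b}(\xi_h^u)\cdot\bm{n}$ is \emph{not} constant on a face: the normal is, but the components of $\bm{b}(\xi_h^u)$ equal the degree-$k$ polynomial $\xi_h^u$, so the orthogonality transfer of $\Pi_l^{\partial}$ that you describe does not isolate anything useful. Second, the closing step of ``using Lemma \ref{L3.4} with $\tilde q=2$ to reinsert a factor $\|\xi_h^u\|_{0,\mathcal{T}_h}$'' inverts that lemma: Lemma \ref{L3.4} gives $\|\xi_h^u\|_{0,\tilde q,\mathcal{T}_h}\lesssim|\|(\xi_h^u,\xi_h^{\widehat u})\||$ and cannot be used to replace a triple norm by an $L^2$ norm; the $L^4$--$L^4$ route you describe lands at a bound of the form $\|u\|_2\,|\|(\xi_h^u,\xi_h^{\widehat u})\||^2$, which is not dominated by the right-hand side of (\ref{4.6}) (take $\xi_h^u=0$ and $\xi_h^{\widehat u}\neq 0$). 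The repair is simple: do not split $S_2$ at all, and estimate it exactly as the paper estimates $R_4$.
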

\begin{proof}
	From the definition of $ \mathcal{B}_{h} $, we have
	\begin{equation*}
	\begin{aligned}
	3\mathcal{B}_{h}(\xi_{h}^{u};\Pi_{k}^{o}u,\Pi_{l}^{\partial}u;\xi_{h}^{u},\xi_{h}^{\widehat{u}})&=-(\bm{b}(\xi_{h}^{u})\Pi_{k}^{o}u,\nabla \xi_{h}^{u})_{\mathcal{T}_{h}}+(\bm{b}(\xi_{h}^{u})\cdotp\nabla \Pi_{k}^{o}u,\xi_{h}^{u})_{\mathcal{T}_{h}}\\
	&\ \ \ -\left\langle \bm{b}(\xi_{h}^{u})\cdotp\bm{n}\Pi_{k}^{o}u,\xi_{h}^{\widehat{u}}\right\rangle_{\partial\mathcal{T}_{h}}+\left\langle \bm{b}(\xi_{h}^{u})\cdotp\bm{n}\Pi_{l}^{\partial}u,\xi_{h}^{u}\right\rangle_{\partial\mathcal{T}_{h}}\\
	&=-(\bm{b}(\xi_{h}^{u})\Pi_{k}^{o}u,\nabla \xi_{h}^{u})_{\mathcal{T}_{h}}+(\bm{b}(\xi_{h}^{u})\cdotp\nabla \Pi_{k}^{o}u,\xi_{h}^{u})_{\mathcal{T}_{h}}\\
	&\ \ \ +\left\langle \bm{b}(\xi_{h}^{u})\cdotp\bm{n}\Pi_{k}^{o}u,\xi_{h}^{u}-\xi_{h}^{\widehat{u}}\right\rangle_{\partial\mathcal{T}_{h}}-\left\langle \bm{b}(\xi_{h}^{u})\cdotp\bm{n}(\Pi_{k}^{o}u-\Pi_{l}^{\partial}u),\xi_{h}^{u}\right\rangle_{\partial\mathcal{T}_{h}}\\
	&=:\sum_{i=1}^{4}R_{i}.
	\end{aligned}
	\end{equation*}
Using the H\"{o}lder inequality, Lemmas \ref{L3.1},  \ref{L3.3} and   \ref{L3.4}, we can obtain
	\begin{equation*}
	\begin{aligned}
	R_{1}&=-(\bm{b}(\xi_{h}^{u})\Pi_{k}^{o}u,\nabla \xi_{h}^{u})_{\mathcal{T}_{h}} \lesssim \|\xi_{h}^{u}\|_{0,\mathcal{T}_{h}}|\Pi_{k}^{o}u|_{0,\infty,\mathcal{T}_{h}}\|\nabla\xi_{h}^{u}\|_{0,\mathcal{T}_{h}} \lesssim \|\xi_{h}^{u}\|_{0,\mathcal{T}_{h}}\|u\|_{2}|\|(\xi_{h}^{u},\xi_{h}^{\widehat{u}})\||,\\
	R_{2}&=(\bm{b}(\xi_{h}^{u})\cdotp\nabla \Pi_{k}^{o}u,\xi_{h}^{u})_{\mathcal{T}_{h}}\lesssim \|\xi_{h}^{u}\|_{0,\mathcal{T}_{h}}\|\nabla\Pi_{k}^{o}u\|_{0,6,\mathcal{T}_{h}}\|\xi_{h}^{u}\|_{0,3,\mathcal{T}_{h}} \lesssim \|\xi_{h}^{u}\|_{0,\mathcal{T}_{h}}\|u\|_{2}|\|(\xi_{h}^{u},\xi_{h}^{\widehat{u}})\||,\\
	R_{3}&=\left\langle \bm{b}(\xi_{h}^{u})\cdotp\bm{n}\Pi_{k}^{o}u,\xi_{h}^{u}-\xi_{h}^{\widehat{u}}\right\rangle_{\partial\mathcal{T}_{h}}
	\lesssim h^{-\frac{1}{2}}\|\xi_{h}^{u}\|_{0,\mathcal{T}_{h}}|\Pi_{k}^{o}u|_{0,\infty,\mathcal{T}_{h}}\|\xi_{h}^{u}-\xi_{h}^{\widehat{u}}\|_{0,2,\partial\mathcal{T}_{h}}\\
	&\lesssim \|\xi_{h}^{u}\|_{0,\mathcal{T}_{h}}\|u\|_{2}|\|(\xi_{h}^{u},\xi_{h}^{\widehat{u}})\||,\\
	R_{4}&=-\left\langle \bm{b}(\xi_{h}^{u})\cdotp\bm{n}(\Pi_{k}^{o}u-\Pi_{l}^{\partial}u),\xi_{h}^{u}\right\rangle_{\partial\mathcal{T}_{h}}\\
	&\leq h^{-\frac{1}{3}}\|\xi_{h}^{u}\|_{0,3,\mathcal{T}_{h}}\|\Pi_{k}^{o}u-\Pi_{l}^{\partial}u\|_{0,2,\partial\mathcal{T}_{h}}h^{-\frac{1}{6}}\|\xi_{h}^{u}\|_{0,6,\mathcal{T}_{h}}\\
	&\leq h^{-\frac{d}{6}}\|\xi_{h}^{u}\|_{0,\mathcal{T}_{h}}h^{-\frac{1}{2}}(\|\Pi_{k}^{o}u-u\|_{0,2,\partial\mathcal{T}_{h}}+\|u-\Pi_{l}^{\partial}u)\|_{0,2,\partial\mathcal{T}_{h}}|\|(\xi_{h}^{u},\xi_{h}^{\widehat{u}})\||\\
	&\lesssim \|\xi_{h}^{u}\|_{0,\mathcal{T}_{h}}\|u\|_{2}|\|(\xi_{h}^{u},\xi_{h}^{\widehat{u}})\||.
	\end{aligned}
	\end{equation*}
As a result, the estimate (\ref{4.6}) holds. 	
	
\end{proof}

\begin{myLem} \label{L4.7}
	Let $ (u,q) $ be the solution to the problem (\ref{2.5})  with $ u\in H^{1}(0,T;H^{k+1}(\Omega)) $ and $ u_{t}\in L^{2}(0,T;H^{k+1}(\Omega)) $, and let $ (\bm{q}_{h},u_{h},\widehat{u}_{h})\in  \bm{Q}_{h}\times V_{h}\times\widehat{V}_{h} $ be the solution of (\ref{2.12}). Then we have
	\begin{equation} \label{4.7}
	\|\xi_{h}^{u}(t)\|^{2}_{0,\mathcal{T}_{h}}+\nu\int_{0}^{t}|\|(\xi_{h}^{u}(\tau),\xi_{h}^{\widehat{u}}(\tau))\||^{2}d \tau\lesssim   h^{2k}(\|u(t)\|_{k+1}+\|u_t(t)\|_{k+1}),
	\end{equation}
\end{myLem}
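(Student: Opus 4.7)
The plan is to test the error equations \eqref{4.5} with the error pair itself and derive an energy-type differential inequality, then apply Gronwall. Specifically, setting $(w,\mu)=(\xi_h^u,\xi_h^{\widehat u})$ in the second equation of \eqref{4.5} and using the symmetry of the time derivative contribution, I would obtain
\[
\frac{1}{2}\frac{d}{dt}\|\xi_h^u\|_{0,\mathcal{T}_h}^{2}+\nu\,|\|(\xi_h^u,\xi_h^{\widehat u})\||^{2}
=\mathcal{R}_{\mathrm{proj}}+\mathcal{R}_{L}+\mathcal{R}_{N}-\Delta\mathcal{B},
\]
where $\mathcal{R}_{\mathrm{proj}}=((\Pi_k^o u-u)_t,\xi_h^u)_{\mathcal{T}_h}$, $\mathcal{R}_L=E_L(u,\xi_h^u,\xi_h^{\widehat u})$, $\mathcal{R}_N=E_N(u;u,\xi_h^u,\xi_h^{\widehat u})$, and $\Delta\mathcal{B}$ is the nonlinear difference $\mathcal{B}_h(\Pi_k^o u;\Pi_k^o u,\Pi_l^\partial u;\xi_h^u,\xi_h^{\widehat u})-\mathcal{B}_h(u_h;u_h,\widehat u_h;\xi_h^u,\xi_h^{\widehat u})$.

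The central algebraic step is to split the nonlinear difference by adding and subtracting $\mathcal{B}_h(u_h;\Pi_k^o u,\Pi_l^\partial u;\xi_h^u,\xi_h^{\widehat u})$, so that
\[
\Delta\mathcal{B}=\mathcal{B}_h(\xi_h^u;\Pi_k^o u,\Pi_l^\partial u;\xi_h^u,\xi_h^{\widehat u})+\mathcal{B}_h(u_h;\xi_h^u,\xi_h^{\widehat u};\xi_h^u,\xi_h^{\widehat u}).
\]
The second piece vanishes by the skew-symmetry property $\mathcal{B}_h(v_h;\cdot,\cdot;\cdot,\cdot)=-\mathcal{B}_h(v_h;\cdot,\cdot;\cdot,\cdot)$ in the last two slots, which is the discrete analogue of the identity \eqref{2.4} exploited in the trilinear form. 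The first piece is controlled directly by Lemma \ref{L4.6}, giving a bound $\lesssim \|\xi_h^u\|_{0,\mathcal{T}_h}\|u\|_{2}|\|(\xi_h^u,\xi_h^{\widehat u})\||$.

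For the remaining right-hand-side terms, I would invoke Lemma \ref{L4.4} for $|\mathcal{R}_L|\lesssim \nu h^{k}\|u\|_{k+1}|\|(\xi_h^u,\xi_h^{\widehat u})\||$, Lemma \ref{L4.2} for $|\mathcal{R}_N|\lesssim h^{k}\|u\|_{k+1}\|u\|_{2}|\|(\xi_h^u,\xi_h^{\widehat u})\||$, and the standard projection estimate together with Cauchy--Schwarz for $|\mathcal{R}_{\mathrm{proj}}|\lesssim h^{k+1}\|u_t\|_{k+1}\|\xi_h^u\|_{0,\mathcal{T}_h}$. Applying Young's inequality with a weight $\nu/8$ on each occurrence of $|\|(\xi_h^u,\xi_h^{\widehat u})\||$, half of the $\nu$-term can be absorbed into the left, leaving
\[
\frac{d}{dt}\|\xi_h^u\|_{0,\mathcal{T}_h}^{2}+\nu|\|(\xi_h^u,\xi_h^{\widehat u})\||^{2}\lesssim \Bigl(1+\tfrac{\|u\|_2^2}{\nu}\Bigr)\|\xi_h^u\|_{0,\mathcal{T}_h}^{2}+h^{2(k+1)}\|u_t\|_{k+1}^{2}+\bigl(\nu+\tfrac{\|u\|_2^2}{\nu}\bigr)h^{2k}\|u\|_{k+1}^{2}.
\]

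Finally, since $u_h(0)=\Pi_k^o u_0$ ensures $\xi_h^u(0)=0$, integrating over $[0,t]$ and applying the Gronwall inequality of Lemma \ref{L3.5} with $\beta(s)\sim 1+\|u(s)\|_2^2/\nu$ (bounded by the assumed regularity) yields the claimed bound. The main obstacle is the handling of the nonlinear term: one must recognize the correct decomposition that cancels the diagonal piece via skew-symmetry and then feed the remaining off-diagonal piece, which contains $\|\xi_h^u\|_{0,\mathcal{T}_h}$ as a factor, into Gronwall rather than trying to absorb it directly—without this the coefficient $\|u\|_2$ cannot be dealt with at the discrete level.
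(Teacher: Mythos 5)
Your proposal follows essentially the same route as the paper: test \eqref{4.5} with $(\xi_h^u,\xi_h^{\widehat u})$, split the nonlinear difference by inserting $\mathcal{B}_h(u_h;\Pi_k^o u,\Pi_l^\partial u;\cdot,\cdot)$ so that one piece vanishes by the antisymmetry of $\mathcal{B}_h$ in its last two argument pairs and the other is bounded by Lemma \ref{L4.6}, then absorb via Young and conclude with Gronwall (Lemma \ref{L3.5}). The only blemishes are notational: the skew-symmetry identity is misstated (the last two argument pairs must be swapped on the right-hand side) and it is the discrete analogue of the identity $\mathcal{B}(u,v,w)=-\mathcal{B}(u,w,v)$, not of \eqref{2.4}; neither affects the argument.
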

\begin{proof}
	Taking $ (w,\mu)=(\xi_{h}^{u},\xi_{h}^{\widehat{u}}) $ in (\ref{4.5}) and using the antisymmetry of $ \mathcal{B}_{h} $, together with Cauchy-Schwarz inequality, Lemmas \ref{L4.2}, \ref{L4.4},   \ref{L4.6} and the  Young's inequality, we have
	\begin{equation*}\begin{aligned}
	&\ \ \ \frac{1}{2}\frac{d}{dt}\|\xi_{h}^{u}\|_{0,\mathcal{T}_{h}}^{2}+\nu|\|(\xi_{h}^{u},\xi_{h}^{\widehat{u}})\||^2\\
	&=((\Pi_{k}^{o}u-u)_{t},\xi_{h}^{u})_{\mathcal{T}_{h}}+E_{L}(u,\xi_{h}^{u},\xi_{h}^{\widehat{u}})+E_{N}(u;u,\xi_{h}^{u},\xi_{h}^{\widehat{u}})+\mathcal{B}_{h}(\xi_{h}^{u};\Pi_{k}^{o}u,\Pi_{l}^{\partial}u;\xi_{h}^{u},\xi_{h}^{\widehat{u}})\\
	&\lesssim h^{k+1}\|u_{t}\|_{k+1}\|\xi_{h}^{u}\|_{0,\mathcal{T}_{h}}+Ch^{k}(\|u\|_{k+1}\|u\|_{2}+\nu\|u\|_{k+1})|\|(\xi_{h}^{u},\xi_{h}^{\widehat{u}})\||\\
	&\ \ \ +C\|\xi_{h}^{u}\|_{0,\mathcal{T}_{h}}\|u\|_{2}|\|(\xi_{h}^{u},\xi_{h}^{\widehat{u}})\||\\
	&\leq \frac{Ch^{2k+2}}{2}\|u_{t}\|^{2}_{k+1}+\frac{1}{2}\|\xi_{h}^{u}\|^{2}_{0,\mathcal{T}_{h}}+\frac{Ch^{2k}}{\nu}(\|u\|_{k+1}\|u\|_{2}+\nu\|u\|_{k+1})^{2}+\frac{\nu}{4}|\|(\xi_{h}^{u},\xi_{h}^{\widehat{u}})\||^{2}\\
	&\ \ \ +\frac{C}{\nu}\|\xi_{h}^{u}\|^{2}_{0,\mathcal{T}_{h}}\|u\|^{2}_{2}+\frac{\nu}{4}|\|(\xi_{h}^{u},\xi_{h}^{\widehat{u}})\||^{2}.
	\end{aligned}\end{equation*}
	Here $C$ is a positive constant independent of $h$. 
	Integrating the above inequality with respect to $ t $ yields
	\begin{equation*}
	\|\xi_{h}^{u}(t)\|_{0,\mathcal{T}_{h}}^{2}+\nu\int_{0}^{t}|\|(\xi_{h}^{u},\xi_{h}^{\widehat{u}})\||^2d \tau\leq \|\xi_{h}^{u}(0)\|_{0,\mathcal{T}_{h}}^{2}+C\int_{0}^{t}\|\xi_{h}^{u}\|_{0,\mathcal{T}_{h}}^{2}d \tau+Ch^{2k},
	\end{equation*}
which, together with  the  Gronwall’s inequality,  gives the desired result (\ref{4.7}).
\end{proof}

From   Lemma \ref{L4.7} and the triangle inequality,  we easily  get the following error estimates for the semi-discrete scheme.
\begin{myTheo}
Let $ (\bm{q}(t),u(t)) $ and $ (\bm{q}_{h}(t),u_{h}(t)) $ be the solutions to the problem (\ref{2.5}) and (\ref{2.12}), respectively. Suppose that $ u\in H^{1}(0,T;H^{k+1}(\Omega)) $ and $ u_{t}\in L^{2}(0,T;H^{k+1}(\Omega)) $,  then 
it holds  
\begin{equation}
\|u(t)-u_{h}(t)\|_{0,\mathcal{T}_{h}}\lesssim h^{k}(\|u(t)\|_{k+1}+\|u_t(t)\|_{k+1}), \quad  t\in[0,T],
\end{equation}
  and
\begin{equation}
\left( \int_{0}^{T}\|\bm{q}-\bm{q}_{h}(\tau)\|_{0,\mathcal{T}_{h}}^{2}d \tau\right) ^{\frac{1}{2}}\lesssim h^{k} \left( \int_{0}^{T}(\|u(\tau)\|_{k+1}^2+\|u_t(\tau)\|_{k+1}^2)d \tau\right) ^{\frac{1}{2}}.
\end{equation}
\end{myTheo}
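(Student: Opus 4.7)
The plan is to prove both estimates by splitting the error into projection error plus the auxiliary error already controlled by Lemma \ref{L4.7}, namely
\begin{equation*}
u-u_{h}=\eta_{h}^{u}+\xi_{h}^{u},\qquad \bm{q}-\bm{q}_{h}=\eta_{h}^{\bm{q}}+\xi_{h}^{\bm{q}},
\end{equation*}
and then to apply the triangle inequality. The projection pieces are handled by the standard $L^{2}$-projection estimates stated at the beginning of Section 3.4, which give $\|\eta_{h}^{u}(t)\|_{0,\mathcal{T}_{h}}\lesssim h^{k+1}\|u(t)\|_{k+1}$ and $\|\eta_{h}^{\bm{q}}(t)\|_{0,\mathcal{T}_{h}}\lesssim h^{k}\|\bm{q}(t)\|_{k}=h^{k}\|\nabla u(t)\|_{k}$, both of which are of the required order.

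For the scalar estimate I would combine the above projection bound for $\eta_{h}^{u}$ with the pointwise-in-time bound for $\|\xi_{h}^{u}(t)\|_{0,\mathcal{T}_{h}}$ supplied by \eqref{4.7}, which is precisely $\lesssim h^{k}(\|u(t)\|_{k+1}+\|u_{t}(t)\|_{k+1})$ (up to constants absorbing $\nu$ and time). The triangle inequality then yields the desired $L^{2}$-in-space estimate for $u-u_{h}$ at every $t\in[0,T]$.

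For the flux estimate the key observation is the first component of the error equations \eqref{4.5}, which reads $\xi_{h}^{\bm{q}}=-\mathcal{K}_{h}(\xi_{h}^{u},\xi_{h}^{\widehat{u}})$, so that by the very definition of the triple-bar norm,
\begin{equation*}
\|\xi_{h}^{\bm{q}}(\tau)\|_{0,\mathcal{T}_{h}}^{2}=\|\mathcal{K}_{h}(\xi_{h}^{u}(\tau),\xi_{h}^{\widehat{u}}(\tau))\|_{0,\mathcal{T}_{h}}^{2}\leq |\|(\xi_{h}^{u}(\tau),\xi_{h}^{\widehat{u}}(\tau))\||^{2}.
\end{equation*}
Integrating this bound from $0$ to $T$ and invoking the integral bound on $|\|(\xi_{h}^{u},\xi_{h}^{\widehat{u}})\||^{2}$ from \eqref{4.7} produces the $L^{2}(0,T;L^{2}(\Omega))$ estimate for $\xi_{h}^{\bm{q}}$ of order $h^{k}$. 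Adding the time-integrated projection bound for $\eta_{h}^{\bm{q}}$ via the triangle inequality finishes the second estimate.

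No step here presents genuine difficulty, since the machinery (error equations, definition of $\mathcal{K}_{h}$, norm $|\|\cdot\||$, projection estimates) has been set up in Section 3. The only mild bookkeeping point is recognizing that Lemma \ref{L4.7} already absorbs the dependence on $\nu$ and on the source-regularity terms $\|u\|_{k+1}, \|u_{t}\|_{k+1}$, so that the constants hidden in $\lesssim$ are independent of $h$; one should therefore just be careful that the $h^{k}$ from \eqref{4.7} dominates the $h^{k+1}$ projection contributions, which it does.
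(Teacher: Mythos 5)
Your proposal is correct and follows essentially the same route as the paper, which simply states that the theorem follows from Lemma \ref{L4.7} and the triangle inequality; you have merely filled in the implicit details (the splitting $u-u_h=\eta_h^u+\xi_h^u$, the identity $\xi_h^{\bm{q}}=-\mathcal{K}_h(\xi_h^u,\xi_h^{\widehat{u}})$ bounding the flux error by the triple-bar norm, and the standard $L^2$-projection estimates). Nothing further is needed.
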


\section{Fully discrete HDG   method}
\subsection{Backward Euler fully discrete scheme}

Given a positive integer $N$, let $0=t_0<t_1<...<t_{\mathrm{N}}=T$ be a uniform division of time domain $[0,T]$,  with   the time step $ \Delta t := \frac{T}{N} $. 
 We refer to $ \bm{q}_{h}^n,u_{h}^n,\widehat{u}_{h}^n $ as the approximation of $ \bm{q}_{h}(t_{n}),u_{h}(t_{n}),\widehat{u}_{h}(t_{n}) $ respectively at the discrete time $ t_{n} = n\Delta t $ for $ n = 1, 2, \ldots , N $. By replacing the time derivative $ (u_{h})_{t} $ at time $ t_{n} $ by the backward difference quotient $$ \partial_{t}u_{h}^{n}=\frac{u_{h}^{n}-u_{h}^{n-1}}{\Delta t} $$ in (\ref{2.12}), the linearized backward Euler HDG   scheme is given as follows: for
each $ 1 \leq n \leq N $, find $ (\bm{q}_{h}^n,u_{h}^n,\widehat{u}_{h}^n)\in \bm{Q}_{h}\times V_{h}\times\widehat{V}_{h} $ such that
\begin{equation}
\left\{
\begin{aligned}
\bm{q}^{n}_{h}+\mathcal{K}_{h}(u^{n}_{h},\widehat{u}^{n}_{h})&=0,\\
(\partial_{t}u^{n}_{h},w)_{\mathcal{T}_{h}}+\nu(\mathcal{K}_{h}(u^{n}_{h},\widehat{u}^{n}_{h}),\mathcal{K}_{h}(w,\mu))_{\mathcal{T}_{h}}+\mathcal{S}_{h}(u^{n}_{h},\widehat{u}^{n}_{h};w,\mu)\\
+\mathcal{B}_{h}(u^{n-1}_{h};u^{n}_{h},\widehat{u}^{n}_{h};w,\mu)&=(f^{n},w)_{\mathcal{T}_{h}},\\
u_{h}^{0}&=\Pi_{k}^{o}u_{0},  \label{5.1}
\end{aligned}
\right.
\end{equation}
for all $ (w,\mu)\in  V_{h}\times\widehat{V}_{h} $.

\begin{myTheo}
	For the fully discrete scheme (\ref{5.1}), we have the following stability result: for any $ 1\leq n \leq N $,
	\begin{equation}
	\|u_{h}^{n}\|^{2}_{0,\mathcal{T}_{h}}+\sum_{i=1}^{n}\|u^{i}_{h}-u^{i-1}_{h}\|^{2}_{0,\mathcal{T}_{h}}+\nu\Delta t\sum_{i=1}^{n}|\|(u^{i}_{h},\widehat{u}^{i}_{h})\||^{2}\lesssim \|u_{h}^{0}\|^{2}_{0,\mathcal{T}_{h}}+\frac{\Delta t}{\nu}\sum_{i=1}^{n}\|f^{i}\|^{2}_{0,\mathcal{T}_{h}}. \label{5.2}
	\end{equation}	
\end{myTheo}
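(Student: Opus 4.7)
The plan is to test the fully discrete equation against the numerical solution itself, exploit the skew-symmetry of the trilinear form $\mathcal{B}_h$ in its last two argument pairs, and then sum over the time indices. Concretely, I would take $(w,\mu)=(u_h^i,\widehat{u}_h^i)$ in the second line of (\ref{5.1}) and observe that
\begin{equation*}
\mathcal{B}_{h}(u_{h}^{i-1};u_{h}^{i},\widehat{u}_{h}^{i};u_{h}^{i},\widehat{u}_{h}^{i})=0.
\end{equation*}
This identity should follow directly from the definition of $\mathcal{B}_h$: splitting off the volume and boundary terms and pairing them shows, via the product rule $\nabla(uw)=u\nabla w+w\nabla u$ on each $K$ together with cancellation of the interface contributions, that $\mathcal{B}_h(v;\cdot,\cdot;\cdot,\cdot)$ is antisymmetric under interchange of its two dependent argument pairs. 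This is essentially the discrete analogue of the identity $\mathcal{B}(u,v,w)=-\mathcal{B}(u,w,v)$ that is already recorded for the continuous trilinear form.

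Next, I would invoke the backward-Euler polarization identity
\begin{equation*}
(\partial_t u_h^i,u_h^i)_{\mathcal{T}_h}=\frac{1}{2\Delta t}\bigl(\|u_h^i\|_{0,\mathcal{T}_h}^2-\|u_h^{i-1}\|_{0,\mathcal{T}_h}^2+\|u_h^i-u_h^{i-1}\|_{0,\mathcal{T}_h}^2\bigr),
\end{equation*}
which accounts for the appearance of the middle term $\sum_i\|u_h^i-u_h^{i-1}\|_{0,\mathcal{T}_h}^2$ in (\ref{5.2}). The second term on the left-hand side of the tested equation is exactly $\nu|\|(u_h^i,\widehat{u}_h^i)\||^2$ after combining $(\mathcal{K}_h(u_h^i,\widehat{u}_h^i),\mathcal{K}_h(u_h^i,\widehat{u}_h^i))_{\mathcal{T}_h}$ with $\mathcal{S}_h(u_h^i,\widehat{u}_h^i;u_h^i,\widehat{u}_h^i)$, by the very definition of the triple-bar norm.

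For the right-hand side $(f^i,u_h^i)_{\mathcal{T}_h}$, I would apply Cauchy--Schwarz followed by Young's inequality together with Lemma \ref{L3.4} to absorb a term of the form $\frac{\nu}{2}|\|(u_h^i,\widehat{u}_h^i)\||^2$ into the left, leaving $\frac{C}{\nu}\|f^i\|_{0,\mathcal{T}_h}^2$ on the right. Multiplying through by $2\Delta t$ and summing for $i=1,\ldots,n$ telescopes the $\|u_h^i\|_{0,\mathcal{T}_h}^2-\|u_h^{i-1}\|_{0,\mathcal{T}_h}^2$ differences and produces exactly the three quantities claimed in (\ref{5.2}), with the initial data appearing via $\|u_h^0\|_{0,\mathcal{T}_h}^2$.

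The only non-routine step is verifying the discrete skew-symmetry $\mathcal{B}_{h}(v;u,\widehat{u};u,\widehat{u})=0$; once that is in hand, the remainder is a standard backward-Euler energy estimate. Notice that one does not even need to bound the convective term by the solution norm because it vanishes exactly, which is precisely why the test function choice produces a clean stability bound independent of Gronwall-type exponential growth.
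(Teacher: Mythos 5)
Your proposal is correct and follows essentially the same route as the paper: test (\ref{5.1}) with $(w,\mu)=(u_h^i,\widehat{u}_h^i)$, use the vanishing of $\mathcal{B}_h$ on the diagonal (its antisymmetry in the last two argument pairs), apply the backward-Euler polarization identity, absorb the source term via Young's inequality and Lemma \ref{L3.4}, and sum over $i$. The only difference is that you spell out the justification of $\mathcal{B}_{h}(v;u,\widehat{u};u,\widehat{u})=0$, which the paper leaves implicit here (it invokes the antisymmetry only in later proofs); this is a welcome addition rather than a deviation.
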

\begin{proof}
	Taking $ (w,\mu)=(u^{i}_{h},\widehat{u}^{i}_{h}) $ in (\ref{5.1}), we have
	\begin{equation*}
	\frac{1}{\Delta t}(u^{i}_{h}-u^{i-1}_{h},u^{i}_{h})_{\mathcal{T}_{h}}+\nu|\|(u^{i}_{h},\widehat{u}^{i}_{h})\||^{2}=(f^{i},u^{i}_{h})_{\mathcal{T}_{h}},
	\end{equation*}
	using the Cauchy-Schwarz inequality, the Young’s inequality and Lemma \ref{L3.4}, we get
	\begin{equation*}	
	\frac{1}{2\Delta t}(\|u^{i}_{h}\|^{2}_{0,\mathcal{T}_{h}}-\|u^{i-1}_{h}\|^{2}_{0,\mathcal{T}_{h}}+\|u^{i}_{h}-u^{i-1}_{h}\|^{2}_{0,\mathcal{T}_{h}})+\nu|\|(u^{i}_{h},\widehat{u}^{i}_{h})\||^{2}\leq \frac{C}{2\nu}\|f^{i}\|^{2}_{0,\mathcal{T}_{h}}+\frac{\nu}{2}|\|(u^{i}_{h},\widehat{u}^{i}_{h})\||^{2}.
	\end{equation*}
	Summing up the above inequality from $ i=1 $ to $ i=n $ leads to the desired result.
\end{proof}

The following theorem shows a result  the  existence and uniqueness of the fully discrete solution. 
\begin{myTheo}
	Given $ u_{h}^{n-1} $, the fully discrete scheme (\ref{5.1}) admits a unique solution $ (\bm{q}_{h}^n,u_{h}^n,\widehat{u}_{h}^n) $ for $1\leq n\leq N$.
\end{myTheo}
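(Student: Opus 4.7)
The key observation is that because the scheme (\ref{5.1}) uses $u^{n-1}_h$ (which is known at time step $n$) as the convection field in the nonlinear term, the system is in fact \emph{linear} in the unknowns $(\bm{q}^n_h, u^n_h, \widehat{u}^n_h)$. Since we work in finite-dimensional spaces $\bm{Q}_h\times V_h\times\widehat{V}_h$, existence is equivalent to uniqueness, so the plan is to prove that the associated homogeneous problem admits only the trivial solution.

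Concretely, I would suppose two solutions $(\bm{q}^{n,1}_h, u^{n,1}_h, \widehat{u}^{n,1}_h)$ and $(\bm{q}^{n,2}_h, u^{n,2}_h, \widehat{u}^{n,2}_h)$ share the same data $u^{n-1}_h$ and $f^n$, set $(\bm{Q}, U, \widehat{U})$ equal to their difference, and observe that it satisfies the homogeneous version of (\ref{5.1}):
\begin{equation*}
\left\{
\begin{aligned}
\bm{Q}+\mathcal{K}_{h}(U,\widehat{U})&=0,\\
\tfrac{1}{\Delta t}(U,w)_{\mathcal{T}_{h}}+\nu(\mathcal{K}_{h}(U,\widehat{U}),\mathcal{K}_{h}(w,\mu))_{\mathcal{T}_{h}}+\mathcal{S}_{h}(U,\widehat{U};w,\mu)+\mathcal{B}_{h}(u^{n-1}_{h};U,\widehat{U};w,\mu)&=0.
\end{aligned}
\right.
\end{equation*}
The central step is then to test with $(w,\mu)=(U,\widehat{U})$ and exploit the antisymmetry identity $\mathcal{B}_{h}(v_{h};U,\widehat{U};U,\widehat{U})=0$, which follows by the same integration-by-parts computation that gave the continuous identity $\mathcal{B}(u,v,v)=0$ (and which is already implicit in the balanced definition of $\mathcal{B}_h$).

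With the trilinear term eliminated, we are left with
\begin{equation*}
\tfrac{1}{\Delta t}\|U\|^{2}_{0,\mathcal{T}_{h}}+\nu|\|(U,\widehat{U})\||^{2}=0,
\end{equation*}
and since $|\|(\cdot,\cdot)\||$ was shown in Section 3.2 to be a norm on $V_{h}\times\widehat{V}_{h}$, we deduce $U=0$, $\widehat{U}=0$, and finally $\bm{Q}=-\mathcal{K}_{h}(U,\widehat{U})=0$ from the first equation. This gives uniqueness, hence existence, for the linear system.

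I do not anticipate any real obstacle here; the main point to verify carefully is the antisymmetry $\mathcal{B}_{h}(u^{n-1}_{h};U,\widehat{U};U,\widehat{U})=0$, which is a short computation matching pairs of volume and face terms in the definition of $\mathcal{B}_h$. Everything else is linear algebra: a square linear system on a finite-dimensional space with trivial kernel is invertible, giving existence and uniqueness of $(\bm{q}^n_h, u^n_h, \widehat{u}^n_h)$ for each $1\leq n\leq N$.
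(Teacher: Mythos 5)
Your proposal is correct and follows essentially the same route as the paper: treat (\ref{5.1}) as a finite-dimensional linear square system, subtract two putative solutions, test the difference equation with the difference itself, and use the antisymmetry $\mathcal{B}_{h}(u^{n-1}_{h};U,\widehat{U};U,\widehat{U})=0$ to conclude $\tfrac{1}{\Delta t}\|U\|^{2}_{0,\mathcal{T}_{h}}+\nu|\|(U,\widehat{U})\||^{2}=0$, hence triviality of the kernel. The only (harmless) addition is your explicit remark that $\bm{Q}=-\mathcal{K}_{h}(U,\widehat{U})=0$, which the paper leaves implicit.
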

\begin{proof}
	As (\ref{5.1}) is a linear   square  system, we know that uniqueness is equivalent to existence. So we only need to show that 	if $ (\bm{q}_{1h}^n,u_{1h}^n,\widehat{u}_{1h}^n) $ and $ (\bm{q}_{2h}^n,u_{2h}^n,\widehat{u}_{2h}^n) $ are two solutions of (\ref{5.1}), then $ (\bm{q}_{1h}^n,u_{1h}^n,\widehat{u}_{1h}^n)=(\bm{q}_{2h}^n,u_{2h}^n,\widehat{u}_{2h}^n) $.
	
	In fact, we have
	\begin{equation}
	\begin{aligned}
	(\frac{u_{1h}^{n}-u_{h}^{n-1}}{\Delta t},w)_{\mathcal{T}_{h}}+\nu(\mathcal{K}_{h}(u_{1h}^n,\widehat{u}_{1h}^n),\mathcal{K}_{h}(w,\mu))_{\mathcal{T}_{h}}+\mathcal{S}_{h}(u_{1h}^n,\widehat{u}_{1h}^n;w,\mu)\\
	+\mathcal{B}_{h}(u^{n-1}_{h};u_{1h}^n,\widehat{u}_{1h}^n;w,\mu)&=(f^{n},w)_{\mathcal{T}_{h}},  \label{5.3}
	\end{aligned}
	\end{equation}
	\begin{equation}
	\begin{aligned}
	(\frac{u_{2h}^{n}-u_{h}^{n-1}}{\Delta t},w)_{\mathcal{T}_{h}}+\nu(\mathcal{K}_{h}(u_{2h}^n,\widehat{u}_{2h}^n),\mathcal{K}_{h}(w,\mu))_{\mathcal{T}_{h}}+\mathcal{S}_{h}(u_{2h}^n,\widehat{u}_{2h}^n;w,\mu)\\
	+\mathcal{B}_{h}(u^{n-1}_{h};u_{2h}^n,\widehat{u}_{2h}^n;w,\mu)&=(f^{n},w)_{\mathcal{T}_{h}}.  \label{5.4}
	\end{aligned}
	\end{equation}
	Subtracting (\ref{5.4}) from (\ref{5.3}), taking $ (w,\mu)=(u_{1h}^{n}-u_{2h}^{n},\widehat{u}_{1h}^{n}-\widehat{u}_{2h}^{n})=(\eta^{n},\widehat{\eta}^{n}) $ and using the antisymmetry of $ \mathcal{B}_{h} $, we get
	\begin{equation*}
	\frac{1}{\Delta t}(\eta^{n},\eta^{n})+\nu|\|(\eta^{n},\widehat{\eta}^{n})\||^{2}=0,
	\end{equation*}
	which means that $\eta^{n}=\widehat{\eta}^{n}=0$. This completes the proof. 
\end{proof}

\subsection{A priori error estimation}
 We first recall the discrete version of the Gronwall inequality in a slightly more general form used in \cite{Heywood1990}.

\begin{myLem} \label{L5.2}
	Let $ \rho, G $ and $ a_{j}, b_{j}, c_{j}, \gamma_{j}, $ for integers $ j\geq 0 $, be nonnegative numbers such that
	\begin{equation*}
	a_{n}+\rho\sum_{j=0}^{n}b_{j}\leq \rho\sum_{j=0}^{n}\gamma_{j}a_{j}+\rho\sum_{j=0}^{n}c_{j}+G, \ \forall n\geq 0.
	\end{equation*}
    Suppose that $ \rho\gamma_{j}<1 $ for all $ j $, and set $ \sigma_{j}=(1-\rho\gamma_{j})^{-1} $, then
    \begin{equation*}
    a_{n}+\rho\sum_{j=0}^{n}b_{j}\leq \exp(\rho\sum_{j=0}^{n}\sigma_{j}\gamma_{j})(\rho\sum_{j=0}^{n}c_{j}+G), \ \forall n\geq 0.
    \end{equation*}
\end{myLem}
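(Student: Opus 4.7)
My plan is to treat this as a standard discrete Gronwall estimate, following the strategy of Heywood and Rannacher. The key idea is that the diagonal term $\rho\gamma_n a_n$ appears on both sides of the hypothesis, so the first move is to absorb it into the left, and then iterate.

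\medskip

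\noindent\textbf{Step 1 (Isolate the diagonal term).} I would split $\rho\sum_{j=0}^n \gamma_j a_j = \rho\gamma_n a_n + \rho\sum_{j=0}^{n-1}\gamma_j a_j$ and move $\rho\gamma_n a_n$ to the left. Writing $M_n := a_n + \rho\sum_{j=0}^n b_j$ and $\tilde C_n := \rho\sum_{j=0}^n c_j + G$, and using $b_j\ge 0$ to bound $(1-\rho\gamma_n)M_n \le (1-\rho\gamma_n)a_n + \rho\sum_{j=0}^n b_j$, the hypothesis becomes
\begin{equation*}
(1-\rho\gamma_n)M_n \;\le\; \rho\sum_{j=0}^{n-1}\gamma_j a_j + \tilde C_n \;\le\; \rho\sum_{j=0}^{n-1}\gamma_j M_j + \tilde C_n,
\end{equation*}
since $a_j\le M_j$. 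Multiplying through by $\sigma_n=(1-\rho\gamma_n)^{-1}$, which is legitimate because $\rho\gamma_n<1$, yields
\begin{equation*}
M_n \;\le\; \sigma_n\rho\sum_{j=0}^{n-1}\gamma_j M_j + \sigma_n\tilde C_n.
\end{equation*}

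\medskip

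\noindent\textbf{Step 2 (Induction on $n$).} With $\Phi_n:=\exp\!\bigl(\rho\sum_{i=0}^n \sigma_i\gamma_i\bigr)$, I want to prove $M_n\le \Phi_n \tilde C_n$ by induction. The base case $n=0$ follows immediately from $M_0\le \sigma_0\tilde C_0$ together with the elementary inequality $\sigma_0=\frac{1}{1-\rho\gamma_0}\le \exp\!\bigl(\tfrac{\rho\gamma_0}{1-\rho\gamma_0}\bigr)=\exp(\rho\sigma_0\gamma_0)=\Phi_0$. For the inductive step, I would substitute $M_j\le \Phi_j \tilde C_j$ ($j<n$) into the bound from Step 1 and use monotonicity $\tilde C_j\le \tilde C_n$ to obtain
\begin{equation*}
M_n \;\le\; \sigma_n\tilde C_n\Bigl[1 + \rho\sum_{j=0}^{n-1}\gamma_j \Phi_j\Bigr].
\end{equation*}

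\medskip

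\noindent\textbf{Step 3 (Telescoping).} The goal is now to show that the bracket times $\sigma_n$ is at most $\Phi_n=\Phi_{n-1}\exp(\rho\sigma_n\gamma_n)$. The key tool is the telescoping identity $\Phi_j-\Phi_{j-1}=\Phi_{j-1}(e^{\rho\sigma_j\gamma_j}-1)\ge \Phi_{j-1}\rho\sigma_j\gamma_j$ (setting $\Phi_{-1}:=1$). Summing gives $\rho\sum_{j=0}^{n-1}\sigma_j\gamma_j\Phi_{j-1}\le \Phi_{n-1}-1$, from which (combined with $\gamma_j\le \sigma_j\gamma_j$ and bookkeeping between $\Phi_{j-1}$ and $\Phi_j$) one derives $1+\rho\sum_{j=0}^{n-1}\gamma_j\Phi_j\le \Phi_{n-1}$. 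Combined with $\sigma_n\le \exp(\rho\sigma_n\gamma_n)$, this closes the induction.

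\medskip

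\noindent\textbf{Anticipated obstacle.} The routine calculations in Steps 1 and 2 are straightforward; the delicate part is Step 3, matching the telescoping bound on $\rho\sum \sigma_j\gamma_j\Phi_{j-1}$ to the sum $\rho\sum \gamma_j\Phi_j$ that actually appears after substitution, since the indices are shifted by one and carry an extra factor $\exp(\rho\sigma_j\gamma_j)$. Getting the constants to line up so that the final exponent is exactly $\rho\sum_{j=0}^n \sigma_j\gamma_j$ — rather than something slightly larger — requires careful bookkeeping, and this is where I would spend most of the care; everything else reduces to the standard absorb-and-iterate template.
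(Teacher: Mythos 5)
The paper offers no proof of this lemma at all: it is quoted verbatim from Heywood--Rannacher \cite{Heywood1990} and used as a black box, so there is nothing internal to compare your argument against. Your Steps 1 and 2 are the standard (and correct) argument: absorbing $\rho\gamma_n a_n$ into the left, multiplying by $\sigma_n$, and setting up the induction with $\Phi_n=\exp(\rho\sum_{i=0}^n\sigma_i\gamma_i)$ are all fine, including the base case via $\sigma_0=1+\rho\gamma_0\sigma_0\le e^{\rho\sigma_0\gamma_0}$.

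Step 3, however, does not close as written. Your telescoping inequality $\Phi_j-\Phi_{j-1}\ge\rho\sigma_j\gamma_j\Phi_{j-1}$ is true, but it yields an upper bound on $\rho\sum_{j=0}^{n-1}\sigma_j\gamma_j\Phi_{j-1}$, and this quantity cannot dominate the sum $\rho\sum_{j=0}^{n-1}\gamma_j\Phi_j$ that you actually need to control: since $1+\rho\sigma_j\gamma_j=\sigma_j$ exactly, one has $\Phi_j=\Phi_{j-1}e^{\rho\sigma_j\gamma_j}\ge\sigma_j\Phi_{j-1}$, hence $\gamma_j\Phi_j\ge\sigma_j\gamma_j\Phi_{j-1}$ --- the comparison goes the wrong way, and no amount of bookkeeping with $\gamma_j\le\sigma_j\gamma_j$ reverses it. The fix is to telescope the other inequality: from $e^{\rho\sigma_j\gamma_j}\ge 1+\rho\sigma_j\gamma_j=\sigma_j$ you get $(1-\rho\gamma_j)\Phi_j\ge\Phi_{j-1}$, i.e.\ $\Phi_j-\Phi_{j-1}\ge\rho\gamma_j\Phi_j$; summing over $j=0,\dots,n-1$ with $\Phi_{-1}:=1$ gives precisely $1+\rho\sum_{j=0}^{n-1}\gamma_j\Phi_j\le\Phi_{n-1}$, and then $\sigma_n\Phi_{n-1}\le e^{\rho\sigma_n\gamma_n}\Phi_{n-1}=\Phi_n$ completes the induction. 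With that one substitution your proof is correct and yields exactly the stated exponent.
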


By following the same line as in the proof of Lemma \ref{L4.3}, we can derive the following lemma.
\begin{myLem} \label{L5.3}
	Let $ (u,q) $ be the solution to the problem (\ref{2.5}), then for any $ (w,\mu)\in V_{h}\times\widehat{V}_{h} $ it holds
	\begin{equation}
	\left\{
	\begin{aligned}
	\boldsymbol{\Pi}_{k-1}^{o}\bm{q}(t_{n})=&-\mathcal{K}_{h}(\Pi_{k}^{o}u(t_{n}),\Pi_{l}^{\partial}u(t_{n})),\\
	(\partial_{t}\Pi_{k}^{o}u(t_{n}),w)_{\mathcal{T}_{h}}&+\nu(\mathcal{K}_{h}(\Pi_{k}^{o}u(t_{n}),\Pi_{l}^{\partial}u(t_{n})),\mathcal{K}_{h}(w,\mu))_{\mathcal{T}_{h}}\\
	&+\mathcal{S}_{h}(\Pi_{k}^{o}u(t_{n}),\Pi_{l}^{\partial}u(t_{n});w,\mu)+\mathcal{B}_{h}(\Pi_{k}^{o}u(t_{n});\Pi_{k}^{o}u(t_{n}),\Pi_{l}^{\partial}u(t_{n});w,\mu)\\
	=&((\partial_{t}\Pi_{k}^{o}u(t_{n})-u_{t}(t_{n}),w)_{\mathcal{T}_{h}}+(f(t_{n}),w)\\
	&+E_{L}(u(t_{n}),w,\mu)+E_{N}(u(t_{n});u(t_{n}),w,\mu). \label{5.5}
	\end{aligned}
	\right.
	\end{equation}
\end{myLem}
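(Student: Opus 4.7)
The plan is to mimic the proof of Lemma \ref{L4.3} almost verbatim, with the single modification that the continuous time derivative $(\Pi_k^o u)_t$ is replaced by the backward difference $\partial_t \Pi_k^o u(t_n)$, so the error in this discretization will appear as the residual term $(\partial_t \Pi_k^o u(t_n) - u_t(t_n), w)_{\mathcal{T}_h}$ on the right-hand side.

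For the first identity, I would apply the definition of $\mathcal{K}_h$ to $(\Pi_k^o u(t_n), \Pi_l^\partial u(t_n))$ and, on each element, integrate by parts and use the defining properties of $\boldsymbol{\Pi}_{k-1}^o$, $\Pi_k^o$, and $\Pi_l^\partial$ together with $\bm{q}(t_n) = -\nabla u(t_n)$, exactly as in Lemma \ref{L4.3}. This yields $(\mathcal{K}_h(\Pi_k^o u(t_n), \Pi_l^\partial u(t_n)), \bm{r})_{\mathcal{T}_h} = -(\boldsymbol{\Pi}_{k-1}^o \bm{q}(t_n), \bm{r})_{\mathcal{T}_h}$ for all $\bm{r}\in \bm{Q}_h$, hence the first equation.

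For the second identity, I would split the left-hand side as $R_1+R_2+R_3+R_4$ where $R_1 := (\partial_t \Pi_k^o u(t_n), w)_{\mathcal{T}_h}$, $R_2$ is the term containing $\mathcal{K}_h(\Pi_k^o u(t_n), \Pi_l^\partial u(t_n))$, $R_3$ is the stabilization term, and $R_4$ is $\mathcal{B}_h$. I would then write
\begin{equation*}
R_1 = (u_t(t_n), w)_{\mathcal{T}_h} + (\partial_t \Pi_k^o u(t_n) - u_t(t_n), w)_{\mathcal{T}_h},
\end{equation*}
and, reproducing the argument for $R_2$ in Lemma \ref{L4.3} with $u$ replaced by $u(t_n)$, arrive at $R_2 = -\nu(\Delta u(t_n), w) - \nu\langle (\nabla u(t_n) - \boldsymbol{\Pi}_{k-1}^o \nabla u(t_n))\cdot\bm{n}, w-\mu\rangle_{\partial\mathcal{T}_h}$. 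Similarly $R_3 = \nu\langle \tau(\Pi_k^o u(t_n) - u(t_n)), \Pi_l^\partial w - \mu\rangle_{\partial\mathcal{T}_h}$ by the idempotence of $\Pi_l^\partial$, so $R_2 + R_3 = -\nu(\Delta u(t_n), w) + E_L(u(t_n), w, \mu)$. For $R_4$, Lemma \ref{L4.1} applied at time $t_n$ gives $R_4 = (\bm{b}(u(t_n))\cdot\nabla u(t_n), w) + E_N(u(t_n); u(t_n), w, \mu)$.

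Combining these four pieces, the terms $(u_t(t_n), w) - \nu(\Delta u(t_n), w) + (\bm{b}(u(t_n))\cdot\nabla u(t_n), w)$ collapse to $(f(t_n), w)$ by evaluating the PDE \eqref{1.1a} at $t_n$ and testing against $w\in V_h\subset L^2(\Omega)$, which yields the claimed identity. There is no genuine obstacle here; the only care required is to ensure the discrete time derivative is carried through consistently, and to verify that the splitting $R_1 = (u_t(t_n), w) + (\partial_t \Pi_k^o u(t_n) - u_t(t_n), w)$ is what supplies the new residual term on the right-hand side in place of $((\Pi_k^o u - u)_t, w)$ appearing in Lemma \ref{L4.3}.
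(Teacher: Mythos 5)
Your proposal is correct and is essentially the paper's own argument: the paper proves Lemma \ref{L5.3} simply by noting that it follows the same line as Lemma \ref{L4.3}, with the backward difference quotient $\partial_{t}\Pi_{k}^{o}u(t_{n})$ replacing $(\Pi_{k}^{o}u)_{t}$ and the split $R_{1}=(u_{t}(t_{n}),w)+(\partial_{t}\Pi_{k}^{o}u(t_{n})-u_{t}(t_{n}),w)_{\mathcal{T}_{h}}$ supplying the new residual term, exactly as you describe.
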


\begin{myTheo} \label{T5.3}
Let $ (\bm{q}(t),u(t)) $ and $ (\bm{q}_{h}^n,u_{h}^n) $ be the solutions to the problem (\ref{2.5}) and (\ref{5.1}), respectively. Suppose $ u \in L^{\infty}(0, T; H^{k+1}(\Omega)) $, $ u_{t} \in L^{\infty}(0, T; H^{k+1}(\Omega)) $ and $ u_{tt} \in L^{2}(0, T; H^{k+1}(\Omega)) $. Then for any  $ 1\leq n \leq N $, it holds the   error estimate
\begin{eqnarray}
&&\|u(t_{n})-u_{h}^{n}\|_{0,\mathcal{T}_{h}}^{2}+\Delta t\sum_{j=1}^{n}\|\bm{q}(t_{n})-\bm{q}_{h}^n\|_{0,\mathcal{T}_{h}}^{2} \nonumber\\
&\lesssim& h^{2k}(\sum_{j=0}^{n}\|u(t_{j})\|_{k+1}^{2}+\int_{0}^{t_{n}}\|u_{t}\|^{2}_{k+1}ds)
+\Delta t^{2}(\int_{0}^{t_{n}}\|u_{tt}\|^{2}_{0}ds+\max\limits_{x\in\Omega,t\in[0,t_n]}|u_{t}|^{2}). \label{5.6}
\end{eqnarray}
\end{myTheo}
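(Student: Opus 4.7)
The plan is to mimic the semi-discrete analysis (Lemma \ref{L4.7}) adapted to the backward Euler time stepping with the linearized treatment of the convective term. Decompose $u(t_n)-u_h^n=\eta_h^u(t_n)+\xi_h^{u,n}$ with $\xi_h^{u,n}:=\Pi_k^o u(t_n)-u_h^n$, and do likewise for $\widehat u$ and $\bm q$. The $\eta$ contributions are already controlled by the standard $L^2$-projection estimates listed at the beginning of Section 3.4, so it suffices to derive a bound on the discrete error $(\xi_h^{u,n},\xi_h^{\widehat u,n})$ together with $\xi_h^{\bm q,n}$, and then invoke the triangle inequality.

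Subtract (\ref{5.1}) from the projection identity (\ref{5.5}) to obtain the error equation and test with $(w,\mu)=(\xi_h^{u,n},\xi_h^{\widehat u,n})$. The discrete time derivative yields the standard telescoping identity
\[
(\partial_t\xi_h^{u,n},\xi_h^{u,n})_{\mathcal T_h}=\tfrac{1}{2\Delta t}\bigl(\|\xi_h^{u,n}\|_{0,\mathcal T_h}^2-\|\xi_h^{u,n-1}\|_{0,\mathcal T_h}^2+\|\xi_h^{u,n}-\xi_h^{u,n-1}\|_{0,\mathcal T_h}^2\bigr),
\]
and the diffusive/stabilization terms reproduce $\nu|\|(\xi_h^{u,n},\xi_h^{\widehat u,n})\||^2$. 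The crux is the nonlinear difference $\mathcal B_h(\Pi_k^o u^n;\Pi_k^o u^n,\Pi_l^\partial u^n;\cdot,\cdot)-\mathcal B_h(u_h^{n-1};u_h^n,\widehat u_h^n;\cdot,\cdot)$, which I would rewrite by adding and subtracting $\mathcal B_h(u_h^{n-1};\Pi_k^o u^n,\Pi_l^\partial u^n;\cdot,\cdot)$ as
\[
\mathcal B_h(\Pi_k^o u^n-u_h^{n-1};\Pi_k^o u^n,\Pi_l^\partial u^n;\xi_h^{u,n},\xi_h^{\widehat u,n})+\mathcal B_h(u_h^{n-1};\xi_h^{u,n},\xi_h^{\widehat u,n};\xi_h^{u,n},\xi_h^{\widehat u,n}).
\]
The second summand vanishes by the antisymmetry $\mathcal B_h(v;w,\mu;w,\mu)=0$. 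For the first I would further split $\Pi_k^o u^n-u_h^{n-1}=\xi_h^{u,n-1}+\Pi_k^o(u^n-u^{n-1})$ and apply the argument of Lemma \ref{L4.6} (which goes through verbatim for a general first argument in $V_h$), producing bounds of the form $\|\xi_h^{u,n-1}\|_{0,\mathcal T_h}\|u(t_n)\|_2|\|(\xi_h^{u,n},\xi_h^{\widehat u,n})\||$ and $\Delta t\,\max_{x,t}|u_t|\cdot\|u(t_n)\|_2\cdot|\|(\xi_h^{u,n},\xi_h^{\widehat u,n})\||$ respectively.

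For the right-hand side of the error equation I would bound the spatial consistency contributions via Lemmas \ref{L4.4} and \ref{L4.2}, while expressing the temporal consistency error through
\[
\partial_t\Pi_k^o u(t_n)-u_t(t_n)=\tfrac{1}{\Delta t}\!\int_{t_{n-1}}^{t_n}\!(\Pi_k^o u_t-u_t)\,ds+\tfrac{1}{\Delta t}\!\int_{t_{n-1}}^{t_n}\!(s-t_{n-1})u_{tt}\,ds,
\]
which contributes an $h^{k+1}\|u_t\|_{k+1}$ piece together with an $\Delta t$ piece controlled by $\|u_{tt}\|_{L^2(t_{n-1},t_n;L^2)}$. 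After Cauchy--Schwarz and Young's inequality absorb a small multiple of $\nu|\|(\xi_h^{u,n},\xi_h^{\widehat u,n})\||^2$ on the left, I would multiply through by $\Delta t$, sum from $j=1$ to $n$, and invoke the discrete Gronwall inequality (Lemma \ref{L5.2}) to close the estimate on $\xi_h^{u,n}$. The flux bound follows at once from $\xi_h^{\bm q,n}=-\mathcal K_h(\xi_h^{u,n},\xi_h^{\widehat u,n})$, which gives $\|\xi_h^{\bm q,n}\|_{0,\mathcal T_h}^2\le|\|(\xi_h^{u,n},\xi_h^{\widehat u,n})\||^2$, combined with the projection estimate for $\bm q$.

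The main obstacle is the careful handling of the lagged coefficient $u_h^{n-1}$ in the nonlinear term: the splitting must be arranged so that only $\xi_h^{u,n-1}$ (and not $\xi_h^{u,n}$) enters the discrete Gronwall sum with a bounded coefficient $\|u(t_n)\|_2^2$, thereby satisfying the hypothesis $\rho\gamma_j<1$ of Lemma \ref{L5.2} for $\Delta t$ sufficiently small, and so that the time-increment piece $\Pi_k^o(u^n-u^{n-1})$ is bounded in $L^2(\Omega)$ by $\Delta t\,\max_{x,t}|u_t|$. The latter bound is precisely what produces the $\Delta t^2\max_{x,t}|u_t|^2$ term on the right-hand side of (\ref{5.6}) and constitutes the only genuinely new ingredient compared with the semi-discrete proof of Lemma \ref{L4.7}.
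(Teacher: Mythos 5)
Your proposal follows essentially the same route as the paper's proof: subtract the fully discrete scheme (\ref{5.1}) from the projected equation (\ref{5.5}), test with the discrete error, use the telescoping identity for the backward difference and the antisymmetry of $\mathcal{B}_{h}$ to kill the quadratic nonlinear term, bound the remaining terms with Lemmas \ref{L4.2}, \ref{L4.4}, \ref{L4.6} and the projection/Taylor splitting of $\partial_{t}\Pi_{k}^{o}u(t_{n})-u_{t}(t_{n})$, and close with the discrete Gronwall inequality plus the relation $\xi_{h}^{\bm{q}}=-\mathcal{K}_{h}(\xi_{h}^{u},\xi_{h}^{\widehat{u}})$ and the triangle inequality. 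The only (immaterial) difference is your splitting of the lagged coefficient as $\Pi_{k}^{o}u(t_{n})-u_{h}^{n-1}=\xi_{h}^{u,n-1}+\Pi_{k}^{o}(u(t_{n})-u(t_{n-1}))$, whereas the paper adds and subtracts $\mathcal{B}_{h}(u_{h}^{n};u_{h}^{n},\widehat{u}_{h}^{n};\cdot,\cdot)$ and arrives at first arguments $\xi_{h}^{u}(t_{n})$ and $u_{h}^{n}-u_{h}^{n-1}$; both variants produce the same $\Delta t\max|u_{t}|$ contribution and the same Gronwall structure.
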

\begin{proof}
Substracting (\ref{5.1}) from (\ref{5.5}),  for    any time $ t_{j} $ we have
\begin{equation}
\begin{aligned}
&(\partial_{t}\xi_{h}^{u}(t_{j}),w)_{\mathcal{T}_{h}}+\nu(\mathcal{K}_{h}(\xi_{h}^{u}(t_{j}),\xi_{h}^{\widehat{u}}(t_{j})),\mathcal{K}_{h}(w,\mu))_{\mathcal{T}_{h}}+\mathcal{S}_{h}(\xi_{h}^{u}(t_{j}),\xi_{h}^{\widehat{u}}(t_{j});w,\mu)\\
=&\mathcal{B}_{h}(u^{n-1}_{h};u^{n}_{h},\widehat{u}^{n}_{h};w,\mu)-\mathcal{B}_{h}(u^{n}_{h};u^{n}_{h},\widehat{u}^{n}_{h};w,\mu)\\
&+\mathcal{B}_{h}(u^{n}_{h};u^{n}_{h},\widehat{u}^{n}_{h};w,\mu)-\mathcal{B}_{h}(\Pi_{k}^{o}u(t_{n});\Pi_{k}^{o}u(t_{n}),\Pi_{l}^{\partial}u(t_{n});w,\mu)\\
&+(\partial_{t}\Pi_{k}^{o}u(t_{j})-u_{t}(t_{j}),w)_{\mathcal{T}_{h}}+E_{L}(u(t_{j}),w,\mu)+E_{N}(u(t_{j});u(t_{j}),w,\mu), \label{5.7}
\end{aligned}
\end{equation}
Taking $ (w,\mu)=(\xi_{h}^{u}(t_{j}),\xi_{h}^{\widehat{u}}(t_{j})) $ in (\ref{5.7}), we get
\begin{equation}
\begin{aligned}
&\frac{1}{\Delta t}(\xi_{h}^{u}(t_{j})-\xi_{h}^{u}(t_{j-1}),\xi_{h}^{u}(t_{j}))_{\mathcal{T}_{h}}+\nu|\|(\xi_{h}^{u}(t_{j}),\xi_{h}^{\widehat{u}}(t_{j}))\||^{2}\\
&=(\partial_{t}\Pi_{k}^{o}u(t_{j})-u_{t}(t_{j}),\xi_{h}^{u}(t_{j}))_{\mathcal{T}_{h}}+E_{L}(u(t_{j}),\xi_{h}^{u}(t_{j}),\xi_{h}^{\widehat{u}}(t_{j}))\\
&\ \ +E_{N}(u(t_{j});u(t_{j}),\xi_{h}^{u}(t_{j}),\xi_{h}^{\widehat{u}}(t_{j}))-\mathcal{B}_{h}(\xi_{h}^{u}(t_{j});\Pi_{k}^{o}u(t_{j}),\Pi_{l}^{\partial}u(t_{j});\xi_{h}^{u}(t_{j}),\xi_{h}^{\widehat{u}}(t_{j}))\\
&\ \ -\mathcal{B}_{h}(u^{j}_{h}-u^{j-1}_{h};\Pi_{k}^{o}u(t_{j}),\Pi_{l}^{\partial}u(t_{j});\xi_{h}^{u}(t_{j}),\xi_{h}^{\widehat{u}}(t_{j}))\\
&:=Q^{j}_{1}+Q^{j}_{2}+Q^{j}_{3}+Q^{j}_{4}+Q^{j}_{5}. \label{5.8}
\end{aligned}
\end{equation}
From the Cauchy-Schwarz inequality, it follows
\begin{equation*}
\begin{aligned}
Q_{1}^{j}&=(\partial_{t}\Pi_{k}^{o}u(t_{j})-u_{t}(t_{j}),\xi_{h}^{u}(t_{j}))_{\mathcal{T}_{h}}\\
&=(\partial_{t}\Pi_{k}^{o}u(t_{j})-\partial_{t}u(t_{j}),\xi_{h}^{u}(t_{j}))_{\mathcal{T}_{h}}+(\partial_{t}u(t_{j})-u_{t}(t_{j}),\xi_{h}^{u}(t_{j}))_{\mathcal{T}_{h}}\\
&\leq (\|\partial_{t}\Pi_{k}^{o}u(t_{j})-\partial_{t}u(t_{j})\|_{0,\mathcal{T}_{h}}+\|\partial_{t}u(t_{j})-u_{t}(t_{j})\|_{0,\mathcal{T}_{h}})\|\xi_{h}^{u}(t_{j})\|_{0,\mathcal{T}_{h}}.
\end{aligned}
\end{equation*}
Then using the property of projection yields
\begin{equation*}
\begin{aligned}
\|\partial_{t}\Pi_{k}^{o}u(t_{j})-\partial_{t}u(t_{j})\|_{0,\mathcal{T}_{h}}&=\|\Pi_{k}^{o}\partial_{t}u(t_{j})-\partial_{t}u(t_{j})\|_{0,\mathcal{T}_{h}}
=\frac{1}{\Delta t}\int_{t_{j-1}}^{t_{j}}|\Pi_{k}^{o}u_{t}-u_{t}|ds\\
&\lesssim \frac{h^{k+1}}{\Delta t}\int_{t_{j-1}}^{t_{j}}|u_{t}|_{k+1}ds\\
&\lesssim\frac{h^{k+1}}{\sqrt{\Delta t}}(\int_{t_{j-1}}^{t_{j}}|u_{t}|^{2}_{k+1}ds)^{\frac{1}{2}}.
\end{aligned}
\end{equation*}
Similarly, we can get
\begin{equation*}
\begin{aligned}
\|\partial_{t}u(t_{j})-u_{t}(t_{j})\|_{0,\mathcal{T}_{h}}&=
\frac{1}{\Delta t}\int_{t_{j-1}}^{t_{j}}(s-t_{j-1})\|u_{tt}\|_{0}ds\\
&\leq\frac{1}{\Delta t}(\int_{t_{j-1}}^{t_{j}}(s-t_{j-1})^{2}ds)^{\frac{1}{2}}(\int_{t_{j-1}}^{t_{j}}\|u_{tt}\|^{2}_{0}ds)^{\frac{1}{2}}\\
&\leq \sqrt{\Delta t}(\int_{t_{j-1}}^{t_{j}}\|u_{tt}\|^{2}_{0}ds)^{\frac{1}{2}}.
\end{aligned}
\end{equation*}
So
\begin{equation*}
Q_{1}^{j}\lesssim (\frac{h^{k+1}}{\sqrt{\Delta t}}(\int_{t_{j-1}}^{t_{j}}|u_{t}|^{2}_{k+1}ds)^{\frac{1}{2}}+\sqrt{\Delta t}(\int_{t_{j-1}}^{t_{j}}\|u_{tt}\|^{2}_{0}ds)^{\frac{1}{2}})\|\xi_{h}^{u}(t_{j})\|_{0,\mathcal{T}_{h}}. \label{5.10}
\end{equation*}
By  Lemmas \ref{L4.2},   \ref{L4.4} and   \ref{L4.6}, we have
\begin{align*}
Q_{2}^{j}&\lesssim h^{k}\|u(t_{j})\|_{k+1} |\|(\xi_{h}^{u}(t_{j}),\xi_{h}^{\widehat{u}}(t_{j}))\||,\\
Q_{3}^{j}&\lesssim h^{k}\|u(t_{j})\|_{k+1} \|u(t_{j})\|_{2} |\|(\xi_{h}^{u}(t_{j}),\xi_{h}^{\widehat{u}}(t_{j}))\||,\\
Q^{j}_{4}&\lesssim \|\xi_{h}^{u}(t_{j})\|_{0,\mathcal{T}_{h}} \|u(t_{j})\|_{2} |\|(\xi_{h}^{u}(t_{j}),\xi_{h}^{\widehat{u}}(t_{j}))\||,\\
Q^{j}_{5}&\lesssim \|u^{j}_{h}-u^{j-1}_{h}\|_{0,\mathcal{T}_{h}} \|u(t_{j})\|_{2} |\|(\xi_{h}^{u}(t_{j}),\xi_{h}^{\widehat{u}}(t_{j}))\||. \label{5.11}
\end{align*}
From the triangle inequality and the property of   projection, it follows
\begin{equation*}
\begin{aligned}
\|u^{j}_{h}-u^{j-1}_{h}\|_{0,\mathcal{T}_{h}}&=\|u^{j}_{h}-\Pi_{k}^{o}u(t_{j})+\Pi_{k}^{o}u(t_{j-1})-u^{j-1}_{h}+\Pi_{k}^{o}(u(t_{j})-u(t_{j-1}))\|_{0,\mathcal{T}_{h}}\\
&\lesssim \|\xi_{h}^{u}(t_{j})-\xi_{h}^{u}(t_{j-1})\|_{0,\mathcal{T}_{h}}+\|\Pi_{k}^{o}(u(t_{j})-u(t_{j-1}))-(u(t_{j})-u(t_{j-1}))\|_{0,\mathcal{T}_{h}}\\
&\ \ +\|u(t_{j})-u(t_{j-1})\|_{0,\mathcal{T}_{h}}\\
&\lesssim \|\xi_{h}^{u}(t_{j})-\xi_{h}^{u}(t_{j-1})\|_{0,\mathcal{T}_{h}}+h^{k+1}\|u(t_{j})-u(t_{j-1})\|_{k+1}+\Delta t\max\limits_{x\in\Omega,t\in[t_{j-1},t_j]}|u_{t}|.
\end{aligned}
\end{equation*}
Thus, 
\begin{equation*}
\begin{aligned}
Q^{j}_{5}&\lesssim \left(\|\xi_{h}^{u}(t_{j})-\xi_{h}^{u}(t_{j-1})\|_{0,\mathcal{T}_{h}}+h^{k+1}\|u(t_{j})-u(t_{j-1})\|_{k+1}\right.\\
&\quad \left.+\Delta t\max\limits_{x\in\Omega, t\in[t_{j-1},t_j]}|u_{t}|\right)\cdotp\|u(t_{j})\|_{2}\cdotp|\|(\xi_{h}^{u}(t_{j}),\xi_{h}^{\widehat{u}}(t_{j}))\||.  \label{5.15}
\end{aligned}
\end{equation*}
Substituting the estimates of $Q^{j}_{m}$ ($m=1,2,\cdots,5$) into  \eqref{5.8},   summing up the obtained inequality from $ j=1 $ to $ j=n $, and noticing that
\begin{equation*}
\frac{1}{\Delta t}(\xi_{h}^{u}(t_{j})-\xi_{h}^{u}(t_{j-1}),\xi_{h}^{u}(t_{j}))_{\mathcal{T}_{h}}=\frac{1}{2\Delta t}(\|\xi_{h}^{u}(t_{j})\|_{0,\mathcal{T}_{h}}^{2}-\|\xi_{h}^{u}(t_{j-1})\|_{0,\mathcal{T}_{h}}^{2}+\|\xi_{h}^{u}(t_{j})-\xi_{h}^{u}(t_{j-1})\|_{0,\mathcal{T}_{h}}^{2}), \label{5.9}
\end{equation*}
we get
\begin{equation}
\begin{aligned}
&\ \ \ \|\xi_{h}^{u}(t_{n})\|_{0,\mathcal{T}_{h}}^{2}+\sum_{j=1}^{n}\|\xi_{h}^{u}(t_{j})-\xi_{h}^{u}(t_{j-1})\|_{0,\mathcal{T}_{h}}^{2}+2\nu\Delta t\sum_{j=1}^{n}|\|(\xi_{h}^{u}(t_{j}),\xi_{h}^{\widehat{u}}(t_{j}))\||^{2}\\
&\lesssim \Delta t\sum_{j=1}^{n}(\frac{h^{k+1}}{\sqrt{\Delta t}}(\int_{t_{j-1}}^{t_{j}}|u_{t}|^{2}_{k+1}ds)^{\frac{1}{2}}+\sqrt{\Delta t}(\int_{t_{j-1}}^{t_{j}}\|u_{tt}\|^{2}_{0}ds)^{\frac{1}{2}})\|\xi_{h}^{u}(t_{j})\|_{0,\mathcal{T}_{h}}\\
&\ \ \ +\Delta t\sum_{j=1}^{n}(h^{k}(\|u(t_{j})\|_{k+1}+\|u(t_{j-1})\|_{k+1})+\Delta t\max\limits_{x\in\Omega,t\in[0,t_n]}|u_{t}|+\|\xi_{h}^{u}(t_{j})\|_{0,\mathcal{T}_{h}}\\
&\ \ \ +\|\xi_{h}^{u}(t_{j})-\xi_{h}^{u}(t_{j-1})\|_{0,\mathcal{T}_{h}})|\|(\xi_{h}^{u}(t_{j}),\xi_{h}^{\widehat{u}}(t_{j}))\||\\
&\lesssim h^{2k+2}\int_{0}^{t_{n}}|u_{t}|^{2}_{k+1}ds+\Delta t^{2}\int_{0}^{t_{n}}\|u_{tt}\|^{2}_{0}ds+\Delta t\sum_{j=1}^{n}\|\xi_{h}^{u}(t_{j})\|_{0,\mathcal{T}_{h}}^{2}\\
&\ \ \ +h^{2k}\sum_{j=0}^{n}\|u(t_{j})\|_{k+1}^{2}+\Delta t^2\max\limits_{x\in\Omega,t\in[0,t_n]}|u_{t}|^{2}+\Delta t\sum_{j=1}^{n}\|\xi_{h}^{u}(t_{j})\|_{0,\mathcal{T}_{h}}^{2}+\Delta t\sum_{j=1}^{n}\|\xi_{h}^{u}(t_{j})-\xi_{h}^{u}(t_{j-1})\|_{0,\mathcal{T}_{h}}^{2}\\
&\ \ \ +\nu\Delta t\sum_{j=1}^{n}|\|(\xi_{h}^{u}(t_{j}),\xi_{h}^{\widehat{u}}(t_{j}))\||^{2},
\end{aligned}
\end{equation}
which, together with Lemma \ref{L5.2} and the triangle inequality,  indicates the desired result.
\end{proof}

\begin{rem}
	Due to the use of   backward Euler scheme for the temporal discretization,  the  fully discretization (\ref{5.1}) is only of first order temporal  accuracy.  In fact,  we can also  apply other higher order implicit time-stepping
	schemes such as the diagonally implicit Runge–Kutta (DIRK) methods.
	
	Consider the following two-stage and third-order DIRK(2,3) formulas \cite{Alexande1977} written in the form of Butcher’s table for time integration:
	\begin{center}
	\begin{tabular}{c c|c}
		$ a_{11} $ & $ a_{12} $ & $ c_{1} $ \\
		
		$ a_{21} $ & $ a_{22} $ & $ c_{2} $ \\
		\hline
		$ b_{1} $ & $ b_{2} $ &  \\	
	\end{tabular}
	\end{center}
We   apply the DIRK(2,3) method to the semi-discrete
	HDG scheme (\ref{2.12}). To simplify notation we write 
	\begin{itemize}
	\item $ t^{n,i} $ for $ t^{n}+c_{i}\Delta t $, 
	\item $ \bm{y}_{h}^{n} $ for $ (\bm{q}^{n}_{h},u^{n}_{h}) $, and 
	\item $ \bm{y}_{h}^{n,i} $ for $ (\bm{q}^{n,i}_{h},u^{n,i}_{h})=(\bm{q}_{h}(t^{n,i}),u_{h}(t^{n,i})) $. 
	\end{itemize}
	The numerical solution $ \bm{y}_{h}^{n+1}=(\bm{q}^{n+1}_{h},u^{n+1}_{h}) $ at time level $ n + 1 $ given by the DIRK(2,3) method is
	computed as follows:
	\begin{equation}
	\bm{y}_{h}^{n+1}=\bm{y}_{h}^{n}+\Delta t(b_{1}\bm{f}_{h,1}+b_{2}\bm{f}_{h,2}),  \label{5.16}
	\end{equation}
	where
	\begin{equation}
	\begin{aligned}
	\bm{f}_{h,1} = \frac{\bm{y}_{h}^{n,1}-\bm{y}_{h}^{n}}{a_{11}\Delta t},\quad 
	\bm{f}_{h,2} = \frac{\bm{y}_{h}^{n,2}-\bm{y}_{h}^{n}}{a_{22}\Delta t}-\frac{a_{21}}{a_{22}}\bm{f}_{h,1}.  \label{5.17}
	\end{aligned}
	\end{equation}
	The intermediate states $ \bm{y}_{h}^{n,i}=(\bm{q}^{n,i}_{h},u^{n,i}_{h}) $, $ i = 1,2 $, with the Ossen iteration, are determined as follows: given $ u_{h}^{n,i,0} $, find $ (\bm{q}_{h}^{n,i},u_{h}^{n,i},\widehat{u}_{h}^{n,i})=(\bm{q}_{h}^{n,i,r},u_{h}^{n,i,r},\widehat{u}_{h}^{n,i,r})\in \bm{Q}_{h}\times V_{h}\times\widehat{V}_{h} $ such that for $ r=1,2,\ldots $,
	\begin{equation}
	\left\{
	\begin{aligned}
	\bm{q}^{n,i,r}_{h}+\mathcal{K}_{h}(u^{n,i,r}_{h},\widehat{u}^{n,i,r}_{h})&=0,\\
	(\frac{1}{a_{ii}\Delta t}u^{n,i,r}_{h},w)_{\mathcal{T}_{h}}+\nu(\mathcal{K}_{h}(u^{n,i,r}_{h},\widehat{u}^{n,i,r}_{h}),\mathcal{K}_{h}(w,\mu))_{\mathcal{T}_{h}}\\+\mathcal{S}_{h}(u^{n,i,r}_{h},\widehat{u}^{n,i,r}_{h};w,\mu)
	+\mathcal{B}_{h}(u^{n,i,r-1}_{h};u^{n,i,r}_{h},\widehat{u}^{n,i,r}_{h};w,\mu)&=(f(t^{n,i}),w)_{\mathcal{T}_{h}}+(z_{h}^{n,i},w)_{\mathcal{T}_{h}},\\
	u_{h}^{0}&=\Pi_{k}^{o}u_{0},   \label{5.18}
	\end{aligned}
	\right.
	\end{equation}
	and the terms $ z_{h}^{n,i} $, $ i = 1,2 $, on the right-hand side of (\ref{5.18}) are given by
	\begin{equation*}
	\begin{aligned}
	z_{h}^{n,1} = \frac{u_{h}^{n}}{a_{11}\Delta t},\quad 
	z_{h}^{n,2} = \frac{u_{h}^{n}}{a_{22}\Delta t}+\frac{a_{21}}{a_{22}}(\frac{u_{h}^{n,1}}{a_{11}\Delta t}-z_{h}^{n,1}).
	\end{aligned}
	\end{equation*}
We note that the resulting system (\ref{5.18}) at each $(i)$-th stage of the DIRK(2,3) method is very similar to the backward–Euler  system (\ref{5.1}). And we will give some numerical experiments in next section to show the efficiency of DIRK(2,3) fully discrete scheme  (\ref{5.16}).

\end{rem}

\section{Numerical experiments}
In this section, we present some numerical results to demonstrate the performance of our proposed  fully discrete HDG
schemes for solving   the Burgers' equation. 

We consider two cases of the HDG methods with $ k \geq 1 $:
\begin{equation}
\begin{aligned}
HDG-I&:\ l=k,\\
HDG-II&:\ l=k-1.
\end{aligned}
\end{equation}
\begin{exmp} \label{E6.1} This example  is to test the accuracy of the back Euler fully discrete scheme (\ref{5.1}).
Take $ \Omega=[0,1]\times[0,1] $, $ T=1 $, and  $ \nu =1, 0.01 $.  The exact solution to the problem (\ref{1.1a})-(\ref{1.1c}) is given by
	\begin{equation*}
	u=e^{-t}x(x-1)y(y-1),\ \ \ \ in\ \Omega\times[0,T],
	\end{equation*}
Then the force term and the
boundary condition can be derived explicitly. 

We use $ M\times M $ uniform triangular meshes (c.f. Figure \ref{domain1}) for the spatial discretization and, to verify the spatial accuracy,   take the time step as 
 $ \Delta t=h^{2} /2$ (i.e. $N=M^2$) for $ k=1 $ and $ \Delta t=\sqrt{2}h^{3}/4 $ for $ k=2 $, respectively.  

Some numerical results of   the relative errors for  the   approximations of $u$ and $\bm{q}$ at the final time with $ k = 1, 2 $ are shown in Tables \ref{table1}- \ref{table4}. We can see that the scheme (\ref{5.1})
yields $(k+1)$-th and $(k)$-th spatial convergence orders of $\lVert u(T)-u_{h}(T)\rVert_0$ and $\lVert \bm{q}(T)-\bm{q}_{h}(T)\rVert_0$, respectively. The results of $\lVert u(T)-u_{h}(T)\rVert_0$ are conformable to Theorems \ref{T5.3}.

\end{exmp}
\begin{figure}[htp]
	\centering
	\includegraphics[width=6.5cm,height=5cm]{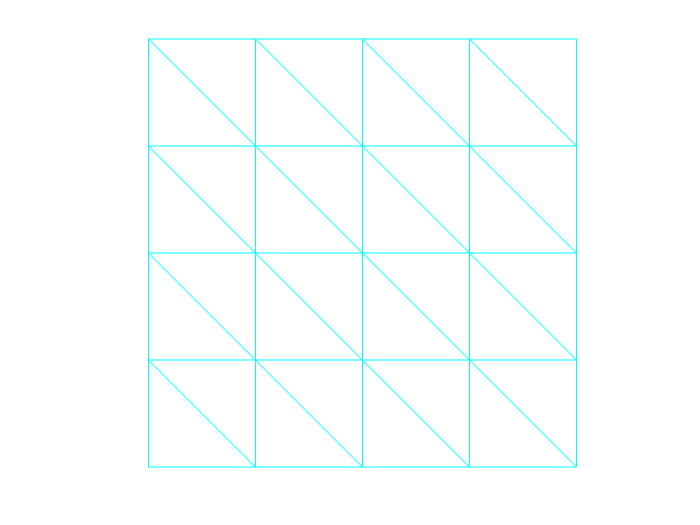}
	\includegraphics[width=6.5cm,height=5cm]{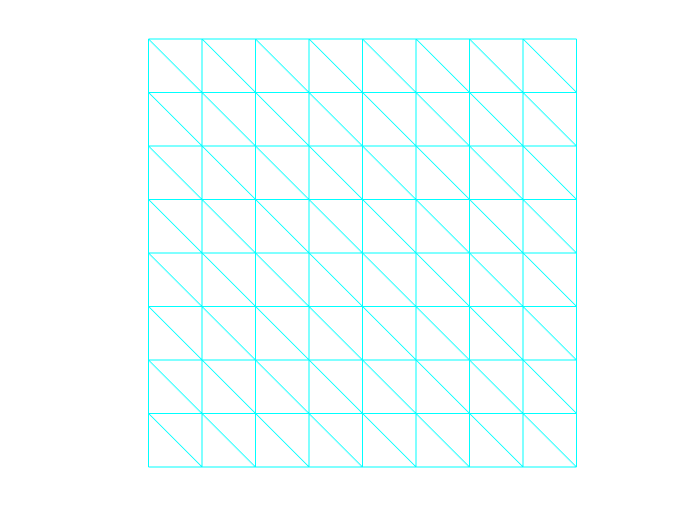} 
	\caption{The domain : $ 4\times 4 $(left) and $ 8\times 8 $(right) mesh} \label{domain1}
\end{figure}

\begin{table}[H]
	\normalsize
	\caption{History of convergence for Example \ref{E6.1} with  $\nu = 1, k = 1$  }
	\centering
	\label{table1}
	\footnotesize
	\subtable[Method: HDG-I$ (l=1) $]{
		\begin{tabular}{p{1.5cm}<{\centering}|p{1.6cm}<{\centering}|p{0.8cm}<{\centering}|p{1.6cm}<{\centering}|p{0.8cm}<{\centering}}
			\hline   
			\multirow{2}{*}{mesh}&  
			\multicolumn{2}{c|}{$\frac{\lVert u(T)-u_{h}(T) \rVert_0}{\lVert  u(T)\rVert_0}$ }&\multicolumn{2}{c}{$\frac{\lVert \bm{q}(T)- \bm{q}_{h}(T)\rVert_0}{\lVert \bm{q}(T)\rVert_0}$}\cr\cline{2-5}  			
			&error&order&error&order\cr  
			\cline{1-5}
			$ 4\times 4 $   &2.1597e-01      &--       &2.9311e-01     &--    \\
			\hline
			$ 8\times 8 $	&5.4132e-02      &2.00	    &1.4864e-01	    &0.98    \\
			\hline
			$ 16\times 16 $	 &1.3543e-02     &2.00        &7.4578e-02       &0.99     \\
			\hline
			$32\times 32$	&3.3865e-03       &2.00	     &3.7322e-02      &1.00	 \\
			\hline
			$64\times 64$	&8.4665e-04       &2.00	    &1.8665e-02	     &1.00	  \\
			\hline
		\end{tabular}
	}
	
	\subtable[Method: HDG-II$ (l=0) $]{
		\begin{tabular}{p{1.5cm}<{\centering}|p{1.6cm}<{\centering}|p{0.8cm}<{\centering}|p{1.6cm}<{\centering}|p{0.8cm}<{\centering}}
			\hline   
			\multirow{2}{*}{mesh}&  
			\multicolumn{2}{c|}{$\frac{\lVert u(T)-u_{h}(T) \rVert_0}{\lVert  u(T)\rVert_0}$ }&\multicolumn{2}{c}{$\frac{\lVert \bm{q}(T)- \bm{q}_{h}(T)\rVert_0}{\lVert \bm{q}(T)\rVert_0}$}\cr\cline{2-5}  
			&error&order&error&order\cr  
			\cline{1-5}
			$ 4\times 4 $   &2.4145e-01      &--       &3.1279e-01     &--    \\
			\hline
			$ 8\times 8 $	&6.0180e-02      &2.00	    &1.5806e-01	    &0.98    \\
			\hline
			$ 16\times 16 $	 &1.5038e-02     &2.00        &7.9255e-02       &1.00     \\
			\hline
			$32\times 32$	&3.7593e-03       &2.00	     &3.9656e-02      &1.00	 \\
			\hline
			$64\times 64$	&9.3980e-04       &2.00	    &1.9832e-02	     &1.00	  \\
			\hline
		\end{tabular}
	}	
\end{table}

\begin{table}[H]
	\normalsize
	\caption{History of convergence for Example \ref{E6.1} with  $\nu = 0.01, k = 1$ }
	\centering
	\label{table2}
	\footnotesize
	\subtable[Method: HDG-I$ (l=1) $]{
		\begin{tabular}{p{1.5cm}<{\centering}|p{1.6cm}<{\centering}|p{0.8cm}<{\centering}|p{1.6cm}<{\centering}|p{0.8cm}<{\centering}}
			\hline   
			\multirow{2}{*}{mesh}&  
			\multicolumn{2}{c|}{$\frac{\lVert u(T)-u_{h}(T) \rVert_0}{\lVert  u(T)\rVert_0}$ }&\multicolumn{2}{c}{$\frac{\lVert \bm{q}(T)- \bm{q}_{h}(T)\rVert_0}{\lVert \bm{q}(T)\rVert_0}$}\cr\cline{2-5}  
			&error&order&error&order\cr  
			\cline{1-5}
			$ 4\times 4 $   &1.1207e-01      &--       &2.9063e-01     &--    \\
			\hline
			$ 8\times 8 $	&3.3444e-02      &1.74	    &1.4978e-01	    &0.96    \\
			\hline
			$ 16\times 16 $	 &8.5650e-03     &1.97        &7.4851e-02       &1.00     \\
			\hline
			$32\times 32$	&2.1460e-03       &2.00	     &3.7359e-02      &1.00	 \\
			\hline
			$64\times 64$	&5.3674e-04       &2.00	    &1.8669e-02	     &1.00	  \\
			\hline
		\end{tabular}
	}
	
	\subtable[Method: HDG-II$ (l=0) $]{
		\begin{tabular}{p{1.5cm}<{\centering}|p{1.6cm}<{\centering}|p{0.8cm}<{\centering}|p{1.6cm}<{\centering}|p{0.8cm}<{\centering}}
			\hline   
			\multirow{2}{*}{mesh}&  
			\multicolumn{2}{c|}{$\frac{\lVert u(T)-u_{h}(T) \rVert_0}{\lVert  u(T)\rVert_0}$ }&\multicolumn{2}{c}{$\frac{\lVert \bm{q}(T)- \bm{q}_{h}(T)\rVert_0}{\lVert \bm{q}(T)\rVert_0}$}\cr\cline{2-5}    
			&error&order&error&order\cr  
			\cline{1-5}
			$ 4\times 4 $   &1.3157e-01      &--       &3.0275e-01     &--    \\
			\hline
			$ 8\times 8 $	&4.1890e-02      &1.65	    &1.6350e-01	    &0.89    \\
			\hline
			$ 16\times 16 $	 &1.0394e-02     &2.01        &8.0699e-02       &1.02     \\
			\hline
			$32\times 32$	&2.5616e-03       &2.02	     &3.9968e-02      &1.01	 \\
			\hline
			$64\times 64$	&6.3806e-04       &2.01	    &1.9936e-02	     &1.00	  \\
			\hline
		\end{tabular}
	}	
\end{table}

\begin{table}[H]
	\normalsize
	\caption{History of convergence for Example \ref{E6.1} with  $\nu = 1, k = 2$ }
	\centering
	\label{table3}
	\footnotesize
	\subtable[Method: HDG-I$ (l=2) $]{
		\begin{tabular}{p{1.5cm}<{\centering}|p{1.6cm}<{\centering}|p{0.8cm}<{\centering}|p{1.6cm}<{\centering}|p{0.8cm}<{\centering}}
			\hline   
			\multirow{2}{*}{mesh}&  
			\multicolumn{2}{c|}{$\frac{\lVert u(T)-u_{h}(T) \rVert_0}{\lVert  u(T)\rVert_0}$ }&\multicolumn{2}{c}{$\frac{\lVert \bm{q}(T)- \bm{q}_{h}(T)\rVert_0}{\lVert \bm{q}(T)\rVert_0}$}\cr\cline{2-5}  
			&error&order&error&order\cr  
			\cline{1-5}
			$ 4\times 4 $   &1.3700e-02      &--       &4.1763e-02     &--    \\
			\hline
			$ 8\times 8 $	&1.5937e-03      &3.10	    &1.0597e-02	    &1.98    \\
			\hline
			$ 16\times 16 $	 &1.9284e-04     &3.05        &2.6642e-03       &1.99     \\
			\hline
			$32\times 32$	&2.3736e-05       &3.02	     &6.6756e-04      &2.00	 \\
			\hline
			$64\times 64$	&2.9449e-06       &3.01	    &1.6705e-04	     &2.00	  \\
			\hline
		\end{tabular}
	}
	
	\subtable[Method: HDG-II$ (l=1) $]{
		\begin{tabular}{p{1.5cm}<{\centering}|p{1.6cm}<{\centering}|p{0.8cm}<{\centering}|p{1.6cm}<{\centering}|p{0.8cm}<{\centering}}
			\hline   
			\multirow{2}{*}{mesh}&  
			\multicolumn{2}{c|}{$\frac{\lVert u(T)-u_{h}(T) \rVert_0}{\lVert  u(T)\rVert_0}$ }&\multicolumn{2}{c}{$\frac{\lVert \bm{q}(T)- \bm{q}_{h}(T)\rVert_0}{\lVert \bm{q}(T)\rVert_0}$}\cr\cline{2-5}    
			&error&order&error&order\cr  
			\cline{1-5}
			$ 4\times 4 $   &1.4714e-02      &--       &4.3007e-02     &--    \\
			\hline
			$ 8\times 8 $	&1.7267e-03      &3.09	    &1.0913e-02	    &1.98    \\
			\hline
			$ 16\times 16 $	 &2.1016e-04     &3.04        &2.7449e-03       &1.99     \\
			\hline
			$32\times 32$	&2.5953e-05       &3.02	     &6.8800e-04      &2.00	 \\
			\hline
			$64\times 64$	&3.2256e-06       &3.01	    &1.7220e-04	     &2.00	  \\
			\hline
		\end{tabular}
	}	
\end{table}

\begin{table}[H]
	\normalsize
	\caption{History of convergence for Example \ref{E6.1} with  $\nu = 0.01, k = 2$ }
	\label{table4}
	\centering
	\footnotesize
	\subtable[Method: HDG-I$ (l=2) $]{
		\begin{tabular}{p{1.5cm}<{\centering}|p{1.6cm}<{\centering}|p{0.8cm}<{\centering}|p{1.6cm}<{\centering}|p{0.8cm}<{\centering}}
			\hline   
			\multirow{2}{*}{mesh}&  
			\multicolumn{2}{c|}{$\frac{\lVert u(T)-u_{h}(T) \rVert_0}{\lVert  u(T)\rVert_0}$ }&\multicolumn{2}{c}{$\frac{\lVert \bm{q}(T)- \bm{q}_{h}(T)\rVert_0}{\lVert \bm{q}(T)\rVert_0}$}\cr\cline{2-5}  
			&error&order&error&order\cr  
			\cline{1-5}
			$ 4\times 4 $   &9.4294e-02      &--       &1.0218e-01     &--    \\
			\hline
			$ 8\times 8 $	&1.2210e-02      &2.95	    &1.6179e-02	    &2.66    \\
			\hline
			$ 16\times 16 $	 &1.5272e-03     &3.00        &3.0797e-03       &2.39     \\
			\hline
			$32\times 32$	&1.9068e-04       &3.00	     &6.9507e-04      &2.15	 \\
			\hline
			$64\times 64$	&2.3817e-05       &3.00	    &1.6880e-04	     &2.04	  \\
			\hline
		\end{tabular}
	}
	
	\subtable[Method: HDG-II$ (l=1) $]{
		\begin{tabular}{p{1.5cm}<{\centering}|p{1.6cm}<{\centering}|p{0.8cm}<{\centering}|p{1.6cm}<{\centering}|p{0.8cm}<{\centering}}
			\hline   
			\multirow{2}{*}{mesh}&  
			\multicolumn{2}{c|}{$\frac{\lVert u(T)-u_{h}(T) \rVert_0}{\lVert  u(T)\rVert_0}$ }&\multicolumn{2}{c}{$\frac{\lVert \bm{q}(T)- \bm{q}_{h}(T)\rVert_0}{\lVert \bm{q}(T)\rVert_0}$}\cr\cline{2-5}    
			&error&order&error&order\cr  
			\cline{1-5}
			$ 4\times 4 $   &9.4922e-02      &--       &1.0298e-01     &--    \\
			\hline
			$ 8\times 8 $	&1.2259e-02      &2.95	    &1.6577e-02	    &2.64    \\
			\hline
			$ 16\times 16 $	 &1.5310e-03     &3.00        &3.1770e-03       &2.38     \\
			\hline
			$32\times 32$	&1.9114e-04       &3.00	     &7.2041e-04      &2.14	 \\
			\hline
			$64\times 64$	&2.3877e-05       &3.00	    &1.7525e-04	     &2.04	  \\
			\hline
		\end{tabular}
	}	
\end{table}

\begin{exmp} \label{E6.2} This example is to test the accuracy of the DIRK(2,3)   scheme  (\ref{5.16}).
	Take $ \Omega=[0,1]\times[0,1] $, $ T=1 $ and $ \nu=0.1 $. The exact solution to the problem (\ref{1.1a})-(\ref{1.1c}) is given by
	\begin{equation*}
	u=(e^{t}-1)xy\tanh(\frac{1-x}{\nu})\tanh(\frac{1-y}{\nu}),\ \ \ \ in\ \Omega\times[0,T].
	\end{equation*}
	  We use $ M\times M $ uniform triangular spatial meshes(c.f. Figure \ref{domain1}) for the computation.  
	 
	 To test the temporal accuracy, we take $k=3$ and use a very fine spatial mesh with $N={256} $.  Numerical results of the errors at the final time $ T $   Table \ref{T5} show that the temporal convergence rate of  the scheme is close to third order. 
	
To verify the spatial accuracy,  we  
	adopt a  small time step $ \Delta t=0.005 $ so that the overall error is governed by the spatial error. Numerical results in Tables \ref{T6} and \ref{T7} for $ k=1 $ and $ k=2 $ show that scheme gives $(k+1)$-th and $(k)$-th spatial convergence orders of $\lVert u(T)-u_{h}(T)\rVert_0$ and $\lVert \bm{q}(T)-\bm{q}_{h}(T)\rVert_0$, respectively.
\end{exmp}

\begin{table}[H]
	\normalsize
	\caption{History of convergence with $ k=3 $: Example \ref{E6.2} }
	\label{T5}
	\centering
	\footnotesize
	
	\begin{tabular}{p{1.5cm}<{\centering}|p{1.6cm}<{\centering}|p{0.8cm}<{\centering}|p{1.6cm}<{\centering}|p{0.8cm}<{\centering}}
		\hline   
		\multirow{3}{*}{$ \Delta t $}&  
		\multicolumn{2}{c|}{HDG-I$ (l=3) $ }&\multicolumn{2}{c}{HDG-II$ (l=2) $}\cr\cline{2-5}
		&\multicolumn{2}{c|}{$\frac{\lVert \bm{u}(T)-\bm{u}_{h}(T)\rVert_0}{\lVert \bm{u}(T)\rVert_0}$ }&\multicolumn{2}{c}{$\frac{\lVert \bm{u}(T)-\bm{u}_{h}(T)\rVert_0}{\lVert \bm{u}(T)\rVert_0}$}\cr\cline{2-5}  			
		&error&order&error&order\cr  
		\cline{1-5}
		0.2	&2.2145e-03     &--    	&2.2145e-03	    &--	  \\
		\hline
		0.1   &3.7353e-04     &2.57    	&3.7353e-04	    &2.57	  \\
		\hline
		0.05	&5.7074e-05     &2.71    	&5.7074e-05	    &2.71	  \\  
		\hline
		0.025	 &8.1013e-06     &2.82    	&8.1013e-06	    &2.82	  \\
		\hline
		0.0125	 &1.0947e-06     &2.89    	&1.0947e-06	    &2.89	  \\
		\hline
		
	\end{tabular}
	
\end{table}

\begin{table}[H]
	\normalsize
	\caption{History of convergence with $ k=1 $: Example \ref{E6.2} }
	\label{T6}
	\centering
	\footnotesize
	\subtable[Method: HDG-I$ (l=1) $]{
		\begin{tabular}{p{1.5cm}<{\centering}|p{1.6cm}<{\centering}|p{0.8cm}<{\centering}|p{1.6cm}<{\centering}|p{0.8cm}<{\centering}}
			\hline   
			\multirow{2}{*}{mesh}&  
			\multicolumn{2}{c|}{$\frac{\lVert u(T)-u_{h}(T) \rVert_0}{\lVert  u(T)\rVert_0}$ }&\multicolumn{2}{c}{$\frac{\lVert \bm{q}(T)- \bm{q}_{h}(T)\rVert_0}{\lVert \bm{q}(T)\rVert_0}$}\cr\cline{2-5}    			
			&error&order&error&order\cr  
			\cline{1-5}
			$ 8\times 8 $	&1.5445e-01      &--	    &4.2222e-01	    &--    \\
			\hline
			$ 16\times 16 $	 &4.2801e-02     &1.85        &1.9709e-01       &1.10     \\
			\hline
			$32\times 32$	&1.0908e-02       &1.97	     &9.8335e-02      &1.00	 \\
			\hline
			$64\times 64$	&2.7412e-03       &1.99	    &4.9190e-02	     &1.00	  \\
			\hline
			$128\times 128$	&6.8606e-04       &2.00	    &2.4598e-02	     &1.00	  \\
			\hline
		\end{tabular}
	}
	
	\subtable[Method: HDG-II$ (l=0) $]{
		\begin{tabular}{p{1.5cm}<{\centering}|p{1.6cm}<{\centering}|p{0.8cm}<{\centering}|p{1.6cm}<{\centering}|p{0.8cm}<{\centering}}
			\hline   
			\multirow{2}{*}{mesh}&  
			\multicolumn{2}{c|}{$\frac{\lVert u(T)-u_{h}(T) \rVert_0}{\lVert  u(T)\rVert_0}$ }&\multicolumn{2}{c}{$\frac{\lVert \bm{q}(T)- \bm{q}_{h}(T)\rVert_0}{\lVert \bm{q}(T)\rVert_0}$}\cr\cline{2-5}  
			&error&order&error&order\cr  
			\cline{1-5}
			$ 8\times 8 $	&1.7900e-01      &--	    &4.5177e-01	    &--    \\
			\hline
			$ 16\times 16 $	 &4.7423e-02     &1.92        &2.0942e-01       &1.11     \\
			\hline
			$32\times 32$	&1.1971e-02       &1.99	     &1.0380e-01      &1.01	 \\
			\hline
			$64\times 64$	&3.0019e-03       &2.00	    &5.1851e-02	     &1.00	  \\
			\hline
			$128\times 128$	&7.5102e-04       &2.00	    &2.5920e-02	     &1.00	  \\
			\hline
		\end{tabular}
	}	
\end{table}	

\begin{table}[H]
	\normalsize
	\caption{History of convergence with $ k=2 $: Example \ref{E6.2} }
	\label{T7}
	\centering
	\footnotesize
	\subtable[Method: HDG-I$ (l=2) $]{
		\begin{tabular}{p{1.5cm}<{\centering}|p{1.6cm}<{\centering}|p{0.8cm}<{\centering}|p{1.6cm}<{\centering}|p{0.8cm}<{\centering}}
			\hline   
			\multirow{2}{*}{mesh}&  
			\multicolumn{2}{c|}{$\frac{\lVert u(T)-u_{h}(T) \rVert_0}{\lVert  u(T)\rVert_0}$ }&\multicolumn{2}{c}{$\frac{\lVert \bm{q}(T)- \bm{q}_{h}(T)\rVert_0}{\lVert \bm{q}(T)\rVert_0}$}\cr\cline{2-5}    			
			&error&order&error&order\cr  
			\cline{1-5}
			$ 8\times 8 $	&2.8115e-02      &--	    &4.9545e-02	    &--    \\
			\hline
			$ 16\times 16 $	 &4.7515e-03     &2.56        &2.1126e-02       &1.23     \\
			\hline
			$32\times 32$	&6.1910e-04       &2.94	     &5.6085e-03      &1.91	 \\
			\hline
			$64\times 64$	&7.7864e-05       &2.99	    &1.4177e-03	     &1.98	  \\
			\hline
			$128\times 128$	&9.7479e-06       &3.00	    &3.5571e-04	     &1.99	  \\
			\hline
		\end{tabular}
	}
	
	\subtable[Method: HDG-II$ (l=1) $]{
		\begin{tabular}{p{1.5cm}<{\centering}|p{1.6cm}<{\centering}|p{0.8cm}<{\centering}|p{1.6cm}<{\centering}|p{0.8cm}<{\centering}}
			\hline   
			\multirow{2}{*}{mesh}&  
			\multicolumn{2}{c|}{$\frac{\lVert u(T)-u_{h}(T) \rVert_0}{\lVert  u(T)\rVert_0}$ }&\multicolumn{2}{c}{$\frac{\lVert \bm{q}(T)- \bm{q}_{h}(T)\rVert_0}{\lVert \bm{q}(T)\rVert_0}$}\cr\cline{2-5}  
			&error&order&error&order\cr  
			\cline{1-5}
			$ 8\times 8 $	&2.9126e-02      &--	    &5.0053e-02	    &--    \\
			\hline
			$ 16\times 16 $	 &4.8563e-03     &2.58       &2.1191e-02       &1.24     \\
			\hline
			$32\times 32$	&6.3425e-04       &2.94	     &5.6117e-03      &1.92	 \\
			\hline
			$64\times 64$	&7.9961e-05       &2.99	    &1.4199e-03	     &1.98	  \\
			\hline
			$128\times 128$	&1.0021e-05       &3.00	    &3.5663e-04	     &1.99	  \\
			\hline
		\end{tabular}
	}	
\end{table}	

\begin{exmp} \label{E6.3}
	This is a three-dimensional example to test the accuracy of the DIRK(2,3)   scheme  (\ref{5.16}) . We take $ \Omega=[0,1]\times[0,1]\times[0,1] $, $ T=1 $ and $ \nu=1 $. The exact solution to the problem (\ref{1.1a})-(\ref{1.1c}) is of the form 
	\begin{equation*}
	u=e^{-t}x(1-x)y(1-y)z(1-z),\ \ \ \ in\ \Omega\times[0,T].
	\end{equation*}
	We use $ N\times N\times N $ uniform triangular spatial meshes (c.f. Figure \ref{domain2}) for the computation. 
	
	To test the spatial accuracy, 
	 we take a   small time step $ \Delta t=0.005 $. Numerical results are given in Table 8 and 9 for $ k=1 $ and $ k=2 $, respectively, which show that   the scheme (\ref{5.16})
yields $(k+1)$-th and $(k)$-th spatial convergence orders of $\lVert u(T)-u_{h}(T)\rVert_0$ and $\lVert \bm{q}(T)-\bm{q}_{h}(T)\rVert_0$, respectively.

\end{exmp}	

\begin{table}[H]
	\normalsize
	\caption{History of convergence with $ k=1 $: Example \ref{E6.3} }
	\label{T8}
	\centering
	\footnotesize
	\subtable[Method: HDG-I$ (l=1) $]{
		\begin{tabular}{p{1.5cm}<{\centering}|p{1.6cm}<{\centering}|p{0.8cm}<{\centering}|p{1.6cm}<{\centering}|p{0.8cm}<{\centering}}
			\hline   
			\multirow{2}{*}{mesh}&  
			\multicolumn{2}{c|}{$\frac{\lVert u(T)-u_{h}(T) \rVert_0}{\lVert  u(T)\rVert_0}$ }&\multicolumn{2}{c}{$\frac{\lVert \bm{q}(T)- \bm{q}_{h}(T)\rVert_0}{\lVert \bm{q}(T)\rVert_0}$}\cr\cline{2-5}   			
			&error&order&error&order\cr  
			\cline{1-5}
			$ 2\times 2\times 2 $   &6.9815e-01      &--       &5.6066e-01     &--    \\
			\hline
			$ 4\times 4\times 4 $	&1.7672e-01      &1.98	    &3.0285e-01	    &0.89    \\
			\hline
			$ 8\times 8\times 8 $	 &4.4207e-02     &2.00        &1.5438e-01       &0.97     \\
			\hline
			$16\times 16\times 16$	&1.1050e-02       &2.00	     &7.7566e-02      &0.99	 \\
			\hline
			$32\times 32\times 32$	&2.7621e-03       &2.00	     &3.8830e-02      &1.00	 \\
			\hline
		\end{tabular}
	}
	
	\subtable[Method: HDG-II$ (l=0) $]{
		\begin{tabular}{p{1.5cm}<{\centering}|p{1.6cm}<{\centering}|p{0.8cm}<{\centering}|p{1.6cm}<{\centering}|p{0.8cm}<{\centering}}
			\hline   
			\multirow{2}{*}{mesh}&  
			\multicolumn{2}{c|}{$\frac{\lVert u(T)-u_{h}(T) \rVert_0}{\lVert  u(T)\rVert_0}$ }&\multicolumn{2}{c}{$\frac{\lVert \bm{q}(T)- \bm{q}_{h}(T)\rVert_0}{\lVert \bm{q}(T)\rVert_0}$}\cr\cline{2-5} 
			&error&order&error&order\cr  
			\cline{1-5}
			$ 2\times 2\times 2 $   &8.8064e-01      &--       &6.1105e-01     &--    \\
			\hline
			$ 4\times 4\times 4 $	&2.0917e-01      &2.07	    &3.1971e-01	    &0.93    \\
			\hline
			$ 8\times 8\times 8 $	 &5.1528e-02     &2.02        &1.6186e-01       &0.98     \\
			\hline
			$16\times 16\times 16$	&1.2835e-02       &2.00	     &8.1193e-02      &0.99	 \\
			\hline
			$32\times 32\times 32$	&3.2055e-03       &2.00	     &4.0630e-02      &1.00	 \\
			\hline
		\end{tabular}
	}	
\end{table}		

\begin{table}[H]
	\normalsize
	\caption{History of convergence with $ k=2 $: Example \ref{E6.3} }
	\label{T9}
	\centering
	\footnotesize
	\subtable[Method: HDG-I$ (l=2) $]{
		\begin{tabular}{p{1.5cm}<{\centering}|p{1.6cm}<{\centering}|p{0.8cm}<{\centering}|p{1.6cm}<{\centering}|p{0.8cm}<{\centering}}
			\hline   
			\multirow{2}{*}{mesh}&  
			\multicolumn{2}{c|}{$\frac{\lVert u(T)-u_{h}(T) \rVert_0}{\lVert  u(T)\rVert_0}$ }&\multicolumn{2}{c}{$\frac{\lVert \bm{q}(T)- \bm{q}_{h}(T)\rVert_0}{\lVert \bm{q}(T)\rVert_0}$}\cr\cline{2-5}   			
			&error&order&error&order\cr  
			\cline{1-5}
			$ 2\times 2\times 2 $   &1.3095e-01      &--       &1.9090e-01     &--    \\
			\hline
			$ 4\times 4\times 4 $	&1.4825e-02      &3.14	    &5.3052e-02	    &1.85    \\
			\hline
			$ 8\times 8\times 8 $	 &1.7531e-03     &3.08        &1.3659e-02       &1.96     \\
			\hline
			$16\times 16\times 16$	&2.1463e-04       &3.03	     &3.4459e-03      &1.99	 \\
			\hline
		\end{tabular}
	}
	
	\subtable[Method: HDG-II$ (l=1) $]{
		\begin{tabular}{p{1.5cm}<{\centering}|p{1.6cm}<{\centering}|p{0.8cm}<{\centering}|p{1.6cm}<{\centering}|p{0.8cm}<{\centering}}
			\hline   
			\multirow{2}{*}{mesh}&  
			\multicolumn{2}{c|}{$\frac{\lVert u(T)-u_{h}(T) \rVert_0}{\lVert  u(T)\rVert_0}$ }&\multicolumn{2}{c}{$\frac{\lVert \bm{q}(T)- \bm{q}_{h}(T)\rVert_0}{\lVert \bm{q}(T)\rVert_0}$}\cr\cline{2-5}  
			&error&order&error&order\cr  
			\cline{1-5}
			$ 2\times 2\times 2 $   &1.4705e-01      &--       &1.9582e-01     &--    \\
			\hline
			$ 4\times 4\times 4 $	&1.6034e-02      &3.20	    &5.3962e-02	    &1.86    \\
			\hline
			$ 8\times 8\times 8 $	 &1.8868e-03     &3.09        &1.3871e-02       &1.96     \\
			\hline
			$16\times 16\times 16$	&2.3104e-04       &3.03	     &3.4979e-03      &1.99	 \\
			\hline
			$32\times 32\times 32$	&2.8681e-05       &3.01	     &8.7713e-04      &2.00	 \\
			\hline
		\end{tabular}
	}	
\end{table}	

\begin{figure}[htp]	
	\centering
	\includegraphics[width=6.5cm,height=5cm]{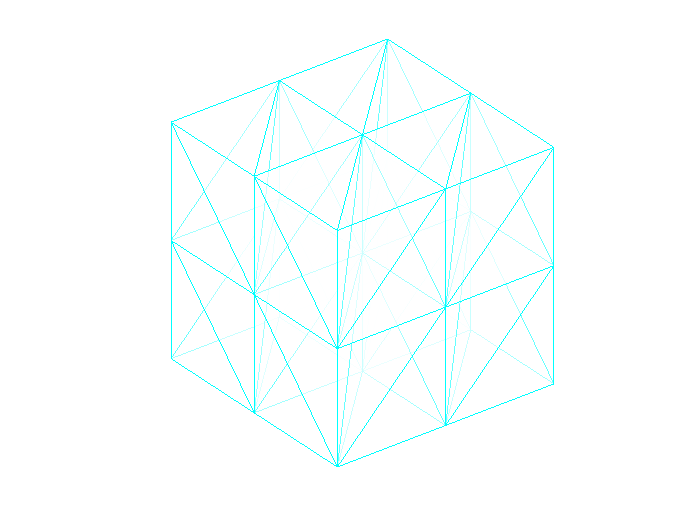}
	\includegraphics[width=6.5cm,height=5cm]{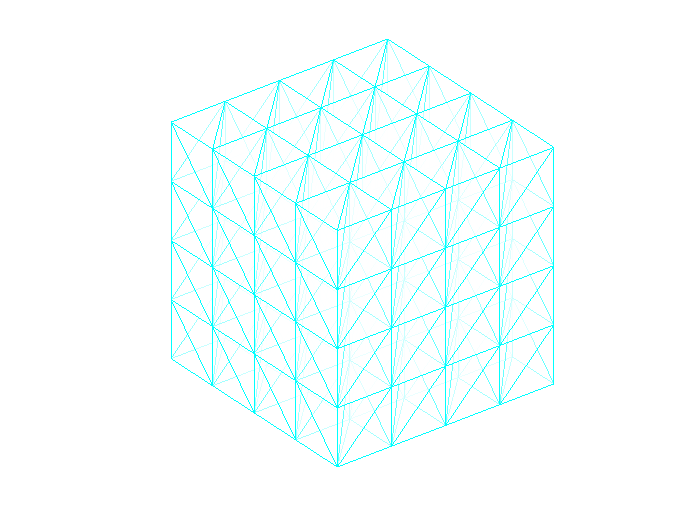} 
	\caption{The domain : $ 2\times 2\times 2 $(left) and $ 4\times 4\times 4 $(right) mesh}\label{domain2}
\end{figure}

\section{Conclusion}
In this paper, we have developed a class of semi-discrete and fully discrete  HDG  methods for the Burgers' equation in two and three dimensions. The existence
and uniqueness of the the semi-discrete solution and error estimation for the semi-discrete
and fully discrete schemes have been derived. Finally, numerical experiments have verified the theoretical results.

\bibliographystyle{plain}
\bibliography{ref}

\begin{thebibliography}{10}

\bibitem{Aksan2005}
E.~N. Aksan.
\newblock A numerical solution of {B}urgers' equation by finite element method
  constructed on the method of discretization in time.
\newblock {\em Applied Mathematics and Computation}, 170(2):895--904, 2005.

\bibitem{Alexande1977}
R.~Alexande.
\newblock Diagonally implicit {R}unge-{K}utta methods for stiff {O.D.E}.'s.
\newblock {\em SIAM Journal on Numerical Analysis}, 14(6):1006--1021, 1977.

\bibitem{Ali1992}
A.~H.~A. Ali, G.~A. Gardner, and L.~R.~T. Gardner.
\newblock A collocation solution for {B}urgers' equation using cubic {B}-spline
  finite elements.
\newblock {\em Computer Methods in Applied Mechanics and Engineering},
  100(3):325--337, 1992.

\bibitem{Arminjon1978}
P.~Arminjon and C.~Beauchamp.
\newblock A finite element method for {B}urgers' equation in hydrodynamics.
\newblock {\em International Journal for Numerical Methods in Engineering},
  12(3):415--428, 1978.

\bibitem{Arminjon1981}
P.~Arminjon and C.~Beauchamp.
\newblock Continuous and discontinuous finite element methods for {B}urgers'
  equation.
\newblock {\em Computer Methods in Applied Mechanics and Engineering},
  25(1):65--84, 1981.

\bibitem{Brenner1994}
S.~C. Brenner and L.~R. Scott.
\newblock The mathematical theory of finite element methods.
\newblock {\em Springer-Verlag, Berlin}, 1994.

\bibitem{Burgers1948}
J.~M. Burgers.
\newblock A mathematical model illustrating the theory of turbulence.
\newblock {\em Advances in Applied Mechanics}, 1:171–199, 1948.

\bibitem{Burgers1995}
J.~M. Burgers.
\newblock Mathematical examples illustrating relations occuring in the theory
  of turbulent fluid motion.
\newblock {\em Springer Science+Business Media, Dordrecht}, 1995.

\bibitem{Burman2015}
E.~Burman.
\newblock Error estimates for forward {E}uler shock capturing finite element
  approximations of the one-dimensional {B}urgers' equation.
\newblock {\em Mathematical Models and Methods in Applied Sciences},
  25(11):2015--2042, 2015.

\bibitem{Caldwell1981}
J.~Caldwell, P.~Wanless, and A.~E. Cook.
\newblock A finite element approach to {B}urgers' equation.
\newblock {\em Applied Mathematical Modelling}, 5(3):189--193, 1981.

\bibitem{Xie2016elastic}
G.~Chen and X.~Xie.
\newblock A robust weak {G}alerkin finite element method for linear elasticity
  with strong symmetric stresses.
\newblock {\em Computational Methods in Applied Mathematics}, 16(3):389–408,
  2016.

\bibitem{Chen2004}
H.~Chen and Z.~Jiang.
\newblock A characteristics-mixed finite element method for {B}urgers'
  equation.
\newblock {\em Journal of Applied Mathematics and Computing}, 15(1-2):29--51,
  2004.

\bibitem{chen2014robust}
H.~Chen, P.~Lu, and X.~Xu.
\newblock A robust multilevel method for hybridizable discontinuous {G}alerkin
  method for the {H}elmholtz equation.
\newblock {\em Journal of Computational Physics}, 264:133–151, 2014.

\bibitem{Chen2019}
Y.~Chen and T.~Zhang.
\newblock A weak {G}alerkin finite element method for {B}urgers’ equation.
\newblock {\em Journal of Computational and Applied Mathematics},
  348(1):103--109, 2019.

\bibitem{Cockburn2011HDGstokes}
B.~Cockburn, J.~Gopalakrishnan, and N.~C. Nguyen.
\newblock Analysis of {HDG} methods for {S}tokes flow.
\newblock {\em Mathematics of Computation}, 80(274):723–760, 2011.

\bibitem{Dogan2004}
A.~Dogan.
\newblock A {G}alerkin finite element approach to {B}urgers' equation.
\newblock {\em Applied Mathematics and Computation}, 157(2):331--346, 2004.

\bibitem{HanChenWangXie2019Extended}
Y.~Han, H.~Chen, X.~Wang, and X.~Xie.
\newblock E{X}tended {HDG} methods for second order elliptic interface
  problems.
\newblock {\em Journal of Scientific Computing}, 84(1):22, 2020.

\bibitem{Heywood1990}
J.~G. Heywood and R.~Rannacher.
\newblock Finite-element approximation of the nonstationary {N}avier-{S}tokes
  problem. {P}art {IV}: Error analysis for second-order time discretization.
\newblock {\em SIAM Journal on Numerical Analysis}, 27(2):353--384, 1990.

\bibitem{Feng2014}
X.~Hu, P.~Huang, and X.~Feng.
\newblock Two-{G}rid method for {B}urgers' equation by a new mixed finite
  element scheme.
\newblock {\em Mathematical Modelling and Analysis}, 19(1):1--17, 2014.

\bibitem{Feng2016}
X.~Hu, P.~Huang, and X.~Feng.
\newblock A new mixed finite element method based on the {C}rank-{N}icolson
  scheme for {B}urgers' equation.
\newblock {\em Applications of Mathematics}, 61(1):27--45, 2016.

\bibitem{Hussein2020}
A.J. Hussein and H.A. Kashkool.
\newblock Weak {G}alerkin finite element method for solving one-dimensional
  coupled {B}urgers' equations.
\newblock {\em Journal of Applied Mathematics and Computing}, 63:265–293,
  2020.

\bibitem{Karakashian2007}
O.A. Karakashian and F.~Pascal.
\newblock Convergence of adaptive discontinuous {G}alerkin approximations of
  second order elliptic problems.
\newblock {\em SIAM Journal on Numerical Analysis}, 45(2):641--665, 2007.

\bibitem{Kutluay2004}
S.~Kutluay, A.~Esen, and I.~Dag.
\newblock Numerical solutions of the {B}urgers' equation by the least-squares
  quadratic {B}-spline finite element method.
\newblock {\em Journal of Computational and Applied Mathematics},
  167(1):21--33, 2004.

\bibitem{Li-X2016analysis}
B.~Li and X.~Xie.
\newblock Analysis of a family of {HDG} methods for second order elliptic
  problems.
\newblock {\em Journal of Computational and Applied Mathematics}, 307:37–51,
  2016.

\bibitem{Lucchi2010}
C.~W. Lucchi.
\newblock Improvement of {M}ac{C}ormack's scheme for {B}urgers' equation.
  {U}sing a finite element method.
\newblock {\em International Journal for Numerical Methods in Engineering},
  15(4):537--555, 2010.

\bibitem{Mittal2015}
R.~C. Mittal and A.~Tripathi.
\newblock Numerical solutions of two-dimensional {B}urgers' equations using
  modified {B}i-cubic {B}-spline finite elements.
\newblock {\em Engineering Computations}, 32(5):1275--1306, 2015.

\bibitem{Cockburn2009}
N.~C. Nguyen, J.~Peraire, and B.~Cockburn.
\newblock An implicit high-order hybridizable discontinuous {G}alerkin method
  for linear convection-diffusion equations.
\newblock {\em Journal of Computational Physics}, 228(23):3232--3254, 2009.

\bibitem{Cockburn20092}
N.~C. Nguyen, J.~Peraire, and B.~Cockburn.
\newblock An implicit high-order hybridizable discontinuous {G}alerkin method
  for nonlinear convection-diffusion equations.
\newblock {\em Journal of Computational Physics}, 228(23):8841--8855, 2009.

\bibitem{Ozics2003}
T.~\"{O}zi\c{s}, E.~N. Aksan, and A.~\"{O}zde\c{s}.
\newblock A finite element approach for solution of {B}urgers' equation.
\newblock {\em Applied Mathematics and Computation}, 139(2):417--428, 2003.

\bibitem{Pany2007}
A.~K. Pany, N.~Nataraj, and S.~Singh.
\newblock A new mixed finite element method for {B}urgers' equation.
\newblock {\em Journal of Applied Mathematics and Computing}, 23(1-2):43--55,
  2007.

\bibitem{Qiu2016An}
W.~Qiu, J.~Shen, and K.~Shi.
\newblock An {HDG} method for linear elasticity with strong symmetric stresses.
\newblock {\em Mathematics of Computations}, 87(309):69–93, 2016.

\bibitem{Shao2011}
L.~Shao, X.~Feng, and Y.~He.
\newblock The local discontinuous {G}alerkin finite element method for
  {B}urger's equation.
\newblock {\em Mathematical and Computer Modelling}, 54(11-12):2943--2954,
  2011.

\bibitem{ShiDong2013}
D.~Shi, J.~Zhou, and D.~Shi.
\newblock A new low order least squares nonconforming characteristics mixed
  finite element method for {B}urgers' equation.
\newblock {\em Applied Mathematics and Computation}, 219(24):11302--11310,
  2013.

\bibitem{Shi2013}
Z.~Shi and M.~Wang.
\newblock Finite element methods.
\newblock {\em Science Press, Beijing}, 2013.

\bibitem{Sterck2005}
H.~D. Sterck, T.~A. Manteuffel, S.~F. Mccormick, and L.~Olson.
\newblock Numerical conservation properties of {H}(div)-conforming
  least-squares finite element methods for the {B}urgers equation.
\newblock {\em SIAM Journal on Scientific Computing}, 2005.

\bibitem{Temam1997}
R.~Temam.
\newblock Infinite-dimensional dynamical system in mechanics and physics.
\newblock {\em Springer-Verlag, Berlin}, 1988.

\bibitem{Ucar2019}
Y.~U\c{c}ar, N.~M. Ya\v{g}murlu, and \.{I}. \c{C}elikkaya.
\newblock Operator splitting for numerical solution of the modified {B}urgers'
  equation using finite element method.
\newblock {\em Numerical Methods for Partial Differential Equations}, 35(2),
  2019.

\bibitem{Warga1972}
J.~Warga.
\newblock Optimal control of differential and functional equations.
\newblock {\em Academic Press, New York}, 1972.

\bibitem{Winterscheidt2010}
D.~Winterscheidt and K.~S. Surana.
\newblock p-version least-squares finite element formulation of {B}urgers'
  equation.
\newblock {\em International Journal for Numerical Methods in Engineering},
  36(21):3629--3646, 2010.

\bibitem{Zhao2011}
G.~Zhao, X.~Yu, and R.~Zhang.
\newblock The new numerical method for solving the system of two-dimensional
  {B}urgers' equations.
\newblock {\em Computers and Mathematics with Applications: An International
  Journal}, 62(8):3279--3291, 2011.

\end{thebibliography}

\end{document}